\newcommand{\nwBase}{\widetilde{\calE}_{K}}
\newcommand{\nwTotl}{\widetilde{\calC}_{K}}
\newcommand{\bS}{\mathbb S}
\newcommand{\EE}{\mathbb E}
\newcommand{\CC}{\mathbb C}
\newcommand{\NN}{\mathbb N}
\newcommand{\RR}{\mathbb R}
\newcommand{\ZZ}{\mathbb Z}
\newcommand{\del}{\partial}
\newcommand{\al}{\alpha}
\newcommand{\calA}{{\mathcal A}}
\newcommand{\calB}{{\mathcal B}}
\newcommand{\calC}{{\mathcal C}}
\newcommand{\calD}{{\mathcal D}}
\newcommand{\calE}{{\mathcal E}}
\newcommand{\calF}{{\mathcal F}}
\newcommand{\calI}{{\mathcal I}}
\newcommand{\calJ}{{\mathcal J}}
\newcommand{\calL}{{\mathcal L}}
\newcommand{\calM}{{\mathcal M}}
\newcommand{\calN}{{\mathcal N}}
\newcommand{\calO}{{\mathcal O}}
\newcommand{\calP}{{\mathcal P}}
\newcommand{\calS}{{\mathcal S}}
\newcommand{\calT}{{\mathcal T}}
\newcommand{\calU}{{\mathcal U}}
\newcommand{\calV}{{\mathcal V}}
\newcommand{\calW}{{\mathcal W}}
\newcommand{\MP}{\calM\!\calP}
\newcommand{\SCM}{\calS\calC\!\calM}
\newcommand{\frakb}{{\mathfrak b}}
\newcommand{\spec}{\mbox{spec\,}}
\newcommand{\frakc}{\mathfrak c}
\newcommand{\frakd}{\mathfrak d}
\newcommand{\frakp}{\mathfrak p}
\newcommand{\frakq}{\mathfrak q}
\newcommand{\frakr}{\mathfrak r}
\newcommand{\fraks}{\mathfrak s}
\newcommand{\Metcc}{{\mathfrak M}{\mathfrak e}{\mathfrak t}_{\mathrm{cc}}}
\newcommand{\frakC}{\mathfrak C}
\newcommand{\Fr}{\mathrm{Fr}}
\newcommand{\tfrakq}{\tilde{\frakq}}
\newcommand{\hfrakq}{\hat{\frakq}}
\newtheorem{theorem}{Theorem}
\newtheorem{proposition}{Proposition}
\newtheorem{lemma}{Lemma}
\theoremstyle{definition}
\newtheorem{definition}{Definition}
\theoremstyle{remark}
\newtheorem*{remark}{Remark}
\title{Conical metrics on Riemann surfaces, II: spherical metrics} 
\author[R. Mazzeo]{Rafe Mazzeo}
\address{Department of Mathematics, Stanford University, 380 Serra Mall, Stanford CA 94305}
\email{rmazzeo@stanford.edu}
\author[X. Zhu]{Xuwen Zhu}
\address{Department of Mathematics, Northeastern University, 549 Lake Hall, Boston MA 02115}
\email{x.zhu@northeastern.edu}
\date{}
\begin{document}

\maketitle

\begin{abstract}
We continue our study, initiated in \cite{MZ}, of Riemann surfaces with constant curvature and isolated 
conic singularities. Using the machinery developed in that earlier paper of extended configuration families 
of simple divisors, we study the existence and deformation theory for spherical conic metrics with some 
or all of the cone angles greater than $2\pi$.  Deformations are obstructed precisely when the number $2$ 
lies in the spectrum of the Friedrichs extension of the Laplacian. Our main result is that, in this case, it is
possible to find a smooth local moduli space of solutions by allowing the cone points to split. 
This analytic fact reflects geometric constructions in \cite{MP, MP2}. 
\end{abstract}

\section{Introduction}
We shall study the following problem: given a compact Riemann surface $M$, a collection
of distinct points $\frakp = \{p_1, \ldots, p_k\} \subset M$ and a collection of positive real numbers
$\beta_1, \ldots, \beta_k$, is it possible to find a metric $g$ on $M$ with constant
curvature and with conic singularities with prescribed cone angles $2\pi \beta_j$ at
the points $p_j$?   If there is a solution, the sign of its curvature is the same as that of 
the conic Euler characteristic 
\begin{equation}\label{e:GB}
\chi(M, \vec \beta) = \chi(M) + \sum_{j=1}^k (\beta_j-1), 
\end{equation}
by virtue of the `conic' Gau\ss-Bonnet formula
\[
\int_M K\, dA = 2\pi \chi(M, \vec \beta).
\]
We always normalize by assuming that $K \in \{-1, 0, 1\}$. 

When $\chi(M, \vec \beta) \leq 0$, existence and uniqueness of solutions for any $\vec \beta \in \RR^k_+$ is
easy to prove using barrier arguments \cite{Mc}.  The spherical case, $K = 1$, has proved more challenging, and
many questions remain open. For cone angles lying in $(0,2\pi)$, Troyanov \cite{Tr} discovered an auxiliary set of 
linear inequalities on the $\beta_j$ 
which are necessary and sufficient for existence; later, Luo and Tian \cite{LT} proved uniqueness of the solution
in this angle regime. When $M \neq \bS^2$, existence was recently proved by Mondello and Panov \cite{MP2} 
for any $\vec \beta$ with $\chi(M, \vec \beta) > 0$, at the expense of not being able to specify the conformal 
class on $M$, see also \cite{BMM}. When $M = \bS^2$, the same two authors \cite{MP} gave necessary conditions 
on $\vec \beta$ for existence, again in the form of a set of linear inequalities, and proved existence in the interior
of this region. In this case one is unable to specify the marked conformal class, i.e., the location of the points 
$\frakp$ on $\bS^{2}$.  In either of these settings, uniqueness sometimes fails.   We also wish to understand the deformation 
theory, i.e., how solutions depend on the `conic data', i.e., the conformal class, the set $\frakp$ and the cone angle
parameters $\vec \beta$.  This is understood when $\chi(M, \vec \beta) \leq 0$, and also in the spherical
case when all $\beta_j < 1$ \cite{MW, MZ}. However, for all of these questions, the complete story in the 
spherical case with at least some of the cone angles greater than $2\pi$ still has many gaps. We review the history 
and further literature for this problem in \S 2.  

The main results in this paper provide new perspectives and insight into these existence and moduli questions 
and indicate potential new intricacies. Our focus in this paper is the local deformation theory for this problem, following 
the work of the first author and Wei\ss~\cite{MW}, but relying heavily on the geometric tools developed in our earlier 
paper~\cite{MZ}.   More specifically, suppose that $g$ is a spherical cone metric on $M$ with `conic data set' 
$\frakd_r(g) = \{\frakc, \frakp, \vec \beta\}$: $\frakc$ is the conformal class of $g$ on $M$, $\frakp = \{p_1,
\ldots, p_k\}$ is an ordered $k$-tuple of points on $M$, and $g$ has a conic singularity with cone angle $2\pi \beta_j$ at 
$p_j$, $j = 1, \ldots, k$. We consider the question of whether all nearby data sets $\{\frakc', \frakp', \vec \beta'\}$ 
are attained by nearby spherical cone metrics, and whether these metrics depend smoothly on these conic data sets.

In this paper we consider these questions at the `premoduli' level, i.e., before taking the quotient by the relevant 
diffeomorphism group.  Indeed, an important feature of the work below involves analyzing families of solutions
when the $k$-tuples of points either merge or split, and there are subtleties in passing to the diffeomorphism 
quotient in these circumstances.  A careful discussion of this problem is deferred to elsewhere.  
Thus we associate to $g$ its {\it unreduced} conic data set $\frakd(g)$; this consists 
of the triple $\{ \fraks(g), \frakp, \vec \beta\}$ where $\fraks(g)$ is the {\it smooth} constant curvature metric 
(normalized to have area $|2\pi \chi(M)|$ when $\chi(M) \neq 0$ and area $1$ when $\chi(M) = 0$) in the (unmarked) conformal 
class of $g$, and as before, $\frakp$ and $\vec \beta$ indicate the locations of the conic singularities and 
the cone angles. We denote by $\Metcc$ the Banach manifold of all smooth constant curvature metrics; this is
infinite dimensional since we are not taking the quotient by diffeomorphisms. Each $k$-tuple $\frakp$ lies in the 
$k$-fold product $M^k$ away from any of the partial diagonals. As we explain in \S 3, this open set in $M^k$ is 
identified with the interior of the `extended configuration space' $\calE_k$.  Altogether then, unreduced conic data 
sets lie in $\Metcc \times \mathrm{int}\, \calE_k \times \RR^k_+$.
In the following we usually refer to unreduced conic data sets simply as conic data sets.

Our first result is a consequence of the analysis in \cite{MW}:
\begin{theorem} Let $g$ be a spherical cone metric as above with conic data set $\frakd(g)$. Let 
$\Delta_g$ denote the Friedrichs realization of the scalar Laplace-Beltrami operator associated to $g$. 
If $2 \not\in \mathrm{spec}\,(\Delta_g)$, then the premoduli space of spherical cone metrics is a smooth 
Banach manifold near $g$ which projects diffeomorphically to an open set in the space of data sets 
$\Metcc \times \mathrm{int}\, \calE_k \times \RR^k_+$ containing $\frakd(g)$. 
\end{theorem}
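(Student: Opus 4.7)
The strategy is to recast the problem as finding zeros of a smooth curvature map between Banach bundles of polyhomogeneous functions and then to apply the implicit function theorem, with the hypothesis $2 \not\in \spec(\Delta_g)$ providing the invertibility of the linearization.

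First, using the extended configuration family machinery of \cite{MZ}, I would construct a smooth family $\{h_{\frakd'}\}$ of reference cone metrics indexed by data sets $\frakd' = (\fraks', \frakp', \vec{\beta}')$ in a neighborhood of $\frakd(g)$. For each such $\frakd'$, start with $\fraks'$ as the smooth background (already supplied), then in isothermal coordinates near each point $p_j' \in \frakp'$ glue in the flat model cone of angle $2\pi\beta_j'$ via a partition of unity. By a conformal modification on the smooth part one may arrange that $h_{\frakd(g)} = g$, so that $g = e^{2u_0} h_{\frakd(g)}$ with $u_0 = 0$. The extended configuration space $\calE_k$ of \cite{MZ} is exactly the parameter space that keeps this construction smooth as $\frakp'$ varies, including control on how the singular support and the indicial roots of the associated cone operators move with the data.

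Next define
\[
\calF(\frakd', u) := K_{e^{2u} h_{\frakd'}} - 1,
\]
as a smooth map of Banach bundles, where the fibers are weighted polyhomogeneous H\"older (or weighted Sobolev) spaces $X_{\frakd'} \to Y_{\frakd'}$ of Mazzeo--Weiss type, designed so that the Friedrichs boundary condition is encoded at each cone point. At $(\frakd(g),0)$ one has $\calF=0$, and a direct calculation of the linearized Liouville equation identifies $D_u \calF|_{(\frakd(g),0)}$ with the operator $\Delta_g - 2$ acting between these spaces. The Fredholm theory of \cite{MW}, extended to the present range of cone angles in \cite{MZ}, guarantees that this is a Fredholm operator of index zero; self-adjointness of the Friedrichs extension makes isomorphism equivalent to the vanishing of the kernel, which is exactly the hypothesis $2 \not\in \spec(\Delta_g)$.

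The implicit function theorem then yields a smooth map $\frakd' \mapsto u(\frakd')$ defined in a neighborhood of $\frakd(g)$ with $\calF(\frakd',u(\frakd')) \equiv 0$, and the family $g_{\frakd'} := e^{2u(\frakd')} h_{\frakd'}$ is a smooth family of spherical cone metrics realizing every nearby data set. Conversely, the data are read off of $g_{\frakd'}$ tautologically, so $\frakd' \mapsto g_{\frakd'}$ is a smooth embedding whose inverse is the projection to $\Metcc \times \mathrm{int}\, \calE_k \times \RR^k_+$; this gives the Banach manifold structure and the stated local diffeomorphism.

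The main obstacle is not the implicit function step itself but arranging the uniformity of the function-space setup as the data vary: both the singular locus (via $\frakp'$) and the asymptotic exponents of the polyhomogeneous expansions (via $\vec{\beta}'$) change with $\frakd'$, and one must organize the cone Sobolev/H\"older spaces into a smooth Banach bundle over the parameter space in order to differentiate $\calF$ in $\frakd'$. This is precisely what the extended configuration family construction of \cite{MZ} accomplishes by exhibiting the family of singular surfaces as the fibers of a single resolved total space; once this geometric groundwork is invoked, the remaining analysis reduces to the argument of \cite{MW}.
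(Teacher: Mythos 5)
Your overall strategy is sound and the key analytic ingredient is exactly the one the paper uses: recast the problem as an implicit-function-theorem argument for the Liouville operator, compute that the linearization in the conformal-factor slot is $L_g = \Delta_g - 2$ (your calculation of $D_u\calF$ is correct), and use that $2 \notin \spec(\Delta_g)$ together with self-adjointness and index zero to conclude this is an isomorphism $\calD^{m,\alpha}_{\mathrm{Fr}} \to \calC^{m,\alpha}_b$, so the IFT applies. Where you diverge from the paper is precisely in what you yourself flag as ``the main obstacle'': you propose to organize the cone H\"older/Friedrichs spaces over the varying conic loci into a smooth Banach bundle via the extended configuration family $\calC_k$ of \cite{MZ}. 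The paper instead sidesteps this entirely by a lighter device: for each data set $\zeta$ near $\frakd(g)$ it fixes a reference conic (not constant-curvature) metric $\tilde g(\zeta)$ with data $\zeta$, then pulls back by a smooth family of compactly supported diffeomorphisms $F_\zeta$ with $F_\zeta(p_j) = p_j + w_j$ to obtain $g(\zeta) = F_\zeta^* \tilde g(\zeta)$, whose conic singularities sit at the \emph{fixed} points $p_j$. The nonlinear map $\calN(\zeta,u) = \Delta_{g(\zeta)}u + K_{g(\zeta)} - e^{2u}$ then acts between \emph{fixed} function spaces on $M_{\frakp}$, and the smooth dependence on $\zeta$ is transparent. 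The paper also records the explicit model computation that deformations in the cone-point location and cone angle produce the indicial solutions $\log r$ and $r^{-1/\beta}\cos\theta$, $r^{-1/\beta}\sin\theta$, which is what identifies $\ker D\calN$ with $T_\zeta\calV$ and hence gives that the solution set projects diffeomorphically to the data. Your Banach-bundle route would also work and is closer in spirit to what the paper actually needs in the obstructed case (\S 7--8), but for this theorem it invokes heavier machinery than necessary; the diffeomorphism-pullback trick is cheaper and keeps the domain of the nonlinear operator literally constant in $\zeta$, which simplifies both the differentiability and the parametrization arguments.
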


When all $\beta_j < 1$, the premoduli space of spherical cone metrics is globally diffeomorphic to the space of 
data sets where the $\beta_j$ satisfy the Troyanov condition \eqref{Troy}, see \cite{MW}.  
For larger cone angles, one might expect the moduli space of spherical cone metrics to `fold', e.g., the projection from
the space of solutions to the space of data sets may no longer be one-to-one. If the moduli space is a smooth 
manifold, one might even expect to use degree theory to take a signed count of solutions, thus quantifying the 
lack of uniqueness. Our main result indicates that the (pre)moduli space is not a smooth manifold, but only stratified, 
which means that any such enumeration of solutions may be difficult. The key problem is that the deformation theory 
is obstructed when $2 \in \mathrm{spec}\,(\Delta_g)$. We show that this spectral condition is unavoidable. In fact, 
the set of cone angle data $\vec \beta$ for which there exists a solution metric with $2$ in the spectrum is 
unbounded in $(\RR^+)^k$.  Furthermore, if this spectral condition holds, then there are explicit examples which 
exhibit that the local deformation theory is obstructed, see \cite{Zhu19}. Our main result is that even if $2$ does 
lie in the spectrum, there is an unobstructed deformation space if we allow for more drastic deformations which permit
the individual points $p_j$ to `splinter' into a collection of conic points with smaller cone angles.  This splitting of cone points
already appears in the purely geometric arguments in \cite{MP}, but enters our analytic arguments in an apparently
different way.   

An alternative perspective on our work here is that we determine the behavior of families of spherical cone metrics as 
the underlying marked conformal structure degenerates in the sense that various subcollections of points coalesce.

To state the following theorem, we introduce some notation. Fix a $k$-tuple $\frakp_{0} \in M^k$ with $p_i \neq p_j$
for all $i, j$. Choose positive integers $N_i$, $i = 1, \ldots, k$, and set $K = N_1 + \ldots + N_k$ (see Theorem 2 below for their definition). Define a new $K$-tuple 
$\frakq = \{q_1, \ldots, q_K\}$ by repeating the point $p_1$ $N_1$ times, $p_2$ $N_2$ times, and so on. 
This point $\frakq$ lies in some intersection of partial diagonals in $M^K$. The extended configuration space $\calE_K$ 
is a resolution of $M^K$ obtained by blowing up these partial diagonals (see \S 3 for a precise definition) and there is a 
boundary face $F_0$ of $\calE_K$ (it is a boundary hypersurface if only one $N_i > 1$, and a corner otherwise) 
which lies above $\frakq$.  As we describe carefully in \S 6, it turns out to be necessary to perform an additional set 
of blowups on $\calE_{K}$, leading to a slightly larger space $\widetilde{\calE}_K$. We then consider points 
$\tilde{\frakq}$ lying in the interior of the front face $\widetilde{F}_0$ of this new space over the point $\frakq$. 

We also specify the choice of cone angle parameters for these extended sets of points.  Let $\vec \beta$ be the angle 
parameter vector for $\frakp$; we say that the $K$-tuple $\vec B \in (\RR^+)^K$ is admissible if the Gau\ss-Bonnet sum is preserved, i.e.
\begin{equation}
\beta_j - 1 = \sum_{i=N_{1}+\dots + N_{j-1}+1}^{N_{1}+\dots + N_{j}}  (B_i - 1).
\label{admissible}
\end{equation}
and no subcluster merge to $2\pi$ (see Definition~\ref{def:B} for details.)  We then fix any admissible $\vec B$ as the set of cone angle parameters 
for $K$-tuples $\frakq'$ near $\frakq$.

Our main result can now be stated, albeit slightly imprecisely:
\begin{theorem}
Let $g$ be a spherical cone metric with conic data set $\frakd(g) = (\fraks(g), \frakp, \vec \beta)$ and suppose
that $2 \in \mathrm{spec}\,(\Delta_g)$. Define $K =  \sum_{j=1}^k N_j$ where $N_j = \max\{[\beta_j], 1\}$ and 
consider all points $\tilde{\frakq}' \in \nwBase$ which lie in a small neighborhood of the point $\tilde{\frakq}$,
i.e., the (not necessarily distinct) points $q_i'$, $N_{j} +1 \leq i \leq N_{j+1}$, lie in a small cluster around 
the point $p_j$. Let $\fraks(g)'$ be a conformal structure close to $\fraks(g)$ and $\vec B$ an admissible 
$K$-tuple of cone angle parameters for the points $\frakq'$.  Then there exists a $p$-submanifold $X \subset \nwBase$ 
containing $\tilde{\frakq}$, the tangent space of which at $\tilde{\frakq}$ is determined by data drawn from the elements
of the eigenspace of $\Delta_g$ with eigenvalue $2$, and a diffeomorphism from $X$ to the premoduli 
space of spherical cone metrics with $K$ cone points near $g$ with cone angle parameters $\vec B$ and 
background conformal class $\fraks(g)'$. 
\end{theorem}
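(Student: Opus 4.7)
The plan is to apply a Lyapunov--Schmidt reduction to the constant curvature equation $K_{g'}=1$, parametrized by data $(\tilde{\frakq}', \fraks')$ in a neighborhood of $(\tilde{\frakq}, \fraks(g))$ in $\nwBase \times \Metcc$. The first step is to use the extended configuration framework of~\cite{MZ} together with the additional blow-up producing $\nwBase$ to construct a smooth family of approximate spherical cone metrics $g_{\tilde{\frakq}',\fraks'}$. Away from the front face $\widetilde{F}_0$ this is a standard local deformation of $g$; near a cluster above $p_j$ with $N_j \geq 2$ it is produced by grafting into $g$ a local spherical cone model on the sphere with $N_j$ cone points whose angles $2\pi B_i$ satisfy the admissibility relation~\eqref{admissible}. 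The polyhomogeneous boundary structure of $\nwBase$ is exactly what is needed to make the gluing parameters (cluster positions and relative scales) vary smoothly with $\tilde{\frakq}'$ up to and including the front face.

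Writing a true nearby solution as $g' = e^{2u} g_{\tilde{\frakq}',\fraks'}$, the equation $K_{g'} = 1$ becomes
\begin{equation*}
(\Delta_{g_{\tilde{\frakq}',\fraks'}} - 2)\, u = N(u) + E(\tilde{\frakq}',\fraks'),
\end{equation*}
where $N$ is quadratic in $u$ and $E$ measures the failure of the approximate metric to be spherical, with controlled decay on $\widetilde{F}_0$ from the grafting construction. By hypothesis the $2$-eigenspace $E_2 \subset L^{2}(M,g)$ is nontrivial, so the linearization at the base point has both kernel and cokernel canonically identified with $E_2$ by self-adjointness of the Friedrichs extension. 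I decompose $u = v + u_\perp$ with $v \in E_2$ and $u_\perp \in E_2^{\perp}$. Following the scheme of~\cite{MW,MZ}, the orthogonal projection of the equation can be solved, for each fixed $(\tilde{\frakq}', \fraks', v)$, by the standard implicit function theorem in the weighted conic function spaces adapted to $\nwBase$, producing a smooth map $u_\perp = u_\perp(\tilde{\frakq}',\fraks',v)$. Substitution then yields a finite-dimensional reduced map
\begin{equation*}
\Phi : \nwBase \times \Metcc \times E_2 \longrightarrow E_2,
\end{equation*}
vanishing at the base point, and the premoduli space of solutions near $g$ (with $\fraks'$ prescribed) is precisely the zero set of $\Phi$, from which $v$ is obtained as a function of $\tilde{\frakq}'$.

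The main obstacle is computing the derivative of $\Phi$ in the splitting directions and proving it is surjective onto $E_2$. The pertinent pairing sends a first-order splitting direction --- a vector tangent to $\widetilde{F}_0$ that infinitesimally redistributes a cluster, producing a distributional curvature source near each $p_j$ --- to its test against the polar Taylor expansion of an eigenfunction $\varphi \in E_2$ at $p_j$. The admissibility condition~\eqref{admissible} exactly kills the leading (zeroth) moment, so the pairing sees only higher-order coefficients of $\varphi$; the choice $N_j = \max\{[\beta_j], 1\}$ and the definition of $\nwBase$ are calibrated so that the number of independent splitting directions encoded by $\widetilde{F}_0$ matches the number of nontrivial higher moments of elements of $E_2$ at the cone points. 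Establishing that this pairing is nondegenerate --- so that the linearized splitting map genuinely surjects onto $E_2$ rather than merely being allowed to by dimension count --- is the technical heart of the argument. Granted this, the regular value theorem exhibits $X := \Phi^{-1}(0)$ as a $p$-submanifold of $\nwBase$ of the expected codimension, whose tangent space at $\tilde{\frakq}$ is naturally identified, via the pairing, with data drawn from $E_2$.
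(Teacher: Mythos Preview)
Your strategy---approximate metrics parametrized by $\nwBase$, projected solutions in $\EE_2^\perp$, then analysis of the finite-dimensional obstruction map---is exactly the paper's. Two minor discrepancies: the kernel parameter $v \in E_2$ you introduce is superfluous, since the paper solves uniquely for $u \in \EE_2^\perp$ at each $\tilde{\frakq}'$ (Proposition~\ref{p:eigen}), so the obstruction $\Lambda$ depends only on $\tilde{\frakq}'$; and the approximate metrics are built from \emph{flat} conformal factors $v = \sum \frakb_i \log|z - z_i|$ near each cluster, not grafted spherical models---this is what makes $\dot v$ explicitly computable. Also, the splitting dimensions do not ``match'' $\dim E_2$ as you suggest: one only has $\ell \leq 2K_0 \leq 2K$, and the result is a codimension-$\ell$ $p$-submanifold.

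The step you explicitly grant---nondegeneracy of the pairing---is the substance of the argument, and your description of it is too vague to count as a proof. The paper computes $\dot\Lambda$ along a curve $\calF^{-1}(t\vec A)$ via Green's formula, arriving at the explicit bilinear form~\eqref{e:balancing2} between the expansion coefficients $(a'_{j\ell},a''_{j\ell})$ of $\phi \in E_2$ at each $p_j$ and the coefficients $(e'_{j\ell},e''_{j\ell})$ of $\dot v$. Getting there requires a nontrivial intermediate step you omit: one must show that the contribution from $\dot u$ to $\dot\Lambda$ vanishes, which rests on the fact that the first $J-1$ $\rho$-derivatives of $\hat v$ vanish on the front face (Lemma~\ref{l:vdot}) and an induction on the eigenvalue derivatives. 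This is precisely where the specific $\vec A$-parametrization and the iterated blowup defining $\nwBase$ enter essentially, not merely as bookkeeping for smoothness. Nondegeneracy is then the assertion that any $\phi \in E_2$ with all these coefficients zero vanishes identically; this is proved by the Bochner--Killing-field argument from~\cite{MW}, showing that such a $\phi$ would produce a conformal Killing field vanishing at every cone point, which cannot exist when $k \geq 3$.
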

The more precise statement will require further definitions. The idea is simply that, having fixed $\vec B$, 
there is a `good' space $X$ of $K$-tuples of conic points $\frakq$ which arises by splitting various of the 
individual cone points in $\frakp$ into small clusters.  The locations of these clustering families is
encoded by the configuration space $\nwBase$.  To say that $X$ is a $p$-submanifold means simply
that it intersects the boundaries and corners of $\nwBase$ cleanly.  We are thus asserting that for a given $\vec B$ 
and $\fraks(g)'$, there exist smooth families of spherical cone metrics $g'$ near to $g$ and with conic points at the 
$K$-tuples $\frakq'$ near (in the sense of merging) to $\frakp$ if and only if $\tilde{\frakq}' \in X \subset \nwBase$. 

Key tools here are the use of the extended configuration spaces $\calE_K$ (and later, $\nwBase$), as well as the
associated extended configuration families $\calC_K$; these were defined and studied in great detail, and 
play a central role in our earlier paper \cite{MZ}.  We review their geometry carefully in \S 3,  but refer to \cite{MZ}
for a more definitive treatment. For now, however, we recall that each $\calE_K$ is a manifold with corners which is 
a compactification of the open set in $M^K$ consisting of all {\it distinct} ordered $K$-tuples $\{p_1, \ldots, p_K\}$;
$\calC_K$ is a universal curve over this configuration space in the sense that it too is a manifold with corners equipped
with a singular fibration over $\calE_K$. Over the interior of $\calE_K$, the fiber of any $\{p_1, \ldots, p_K\}$ is a copy 
of $M$ blown up at these $K$ points.  The heart of our method is to construct families of fiberwise metrics on $\calC_K$ 
solving the curvature equation to infinite order at the faces of $\calC_K$ which correspond to the collapse of a $K$-tuple 
$\{q_1, \ldots, q_K\}$ to a $k$-tuple $\{p_1, \ldots, p_k\}$. We also show that the infinitesimal deformations of these 
approximate solutions fill out the cokernel of the linearization. This main result then follows from the implicit function theorem (see \S 7 and \S 8).

The geometry of these spaces is quite complicated, but they capture rather complete information about families of constant 
curvature conic metrics. For example, one of the main results of \cite{MZ} states that when $\chi(M, \vec \beta) \leq 0$, hence 
solutions exist for all choices of data sets, then the solution families are polyhomogeneous, i.e., maximally smooth, as a 
family of fiber metrics on $\calC_k$. The analogous regularity holds in this spherical setting too.   Our methods for analyzing 
the family of conic Laplacians on the fibers of $\calC_K$ should be useful in other problems 
involving families of elliptic operators with merging regular singularities. 

This paper is organized as follows. In \S 2 we give a review of existing literatures on spherical conic metrics. In \S 3 we describe the two configuration spaces $\calE_{k}$ and $\calC_{k}$. In \S 4 we discuss the mapping and regularity properties of the linearized Liouville operator, and in particular we prove the deformation theory in the unobstructed case when $2\notin \spec(\Delta_{g})$. In \S 5 we describe the locus of degenerate spherical conic metrics and show there are many spherical cone metrics with 2 in the spectrum of $\Delta_{g}$. In \S 6 we describe the local and global behavior of families of metrics with cone points splitting into clusters which generates a family of functions that will unobstruct the main deformation problem.  We also show the construction of spaces $\tilde \calE_{K}$ and $\calC_{K}$, which is necessary to desingularize the parametrization. In \S 7 we construct projected solutions that solve the Liouville equation modulo the finite dimensional space of 2-eigenfunctions, and show these solutions are polyhomogeneous on the configuration spaces. In \S 8 we finally identify those configurations of conic points on $\tilde \calE_{K}$ that remove the error from \S 7, determined asymptotically by a symplectic pairing formula using eigenfunctions and functions generated by splitting of cone points from \S 6, therefore proving the final deformation theorem. 

\section{Spherical conic metrics}
We now review at least some of the rather extensive history of the study of spherical conic metrics. These fundamental
objects have the beguiling feature that they arise in many places in mathematics and may be approached from many 
different points of view, including synthetic geometry, complex analysis, the theory of character varieties, 
calculus of variations and other methods of geometric analysis. 

As noted earlier, it is quite easy to prove \cite{Mc} that there exist hyperbolic or flat conic metrics 
with any prescribed data sets $\{\fraks, \frakp, \vec \beta\}$, where the sign of $\chi(M, \vec \beta) \leq 0$
determines the curvature $K \in \{-1, 0\}$.  Recall that, as in the introduction, $\fraks$ is a smooth 
constant curvature metric uniformizing its conformal class, $\frakp = \{p_1, \ldots, p_k\}$ is
a $k$-tuple of distinct points on $M$ and $\vec \beta \in \RR^k_+$.   Indeed, let $h_0$ be a smooth uniformizing
(nonconic) metric representing any given conformal structure on $M$. Then this problem reduces to solving Liouville's equation:
\begin{equation}
\Delta_{h_0} u + K_{h_0} - K e^{2u} = 0
\label{Liouville}
\end{equation}
where 
\[
\Delta = -\, \mathrm{div}\, \nabla
\]
and $K_{h_0}$ and $K$ are the Gauss curvatures of $g_0$ and $g = e^{2u} h_0$.  The conic singularities arise 
from the `boundary value' 
\begin{equation}
u(z) = (\beta_j-1) \log |z|  + \calO(1)\ \ \ \mbox{near}\ \ p_j,
\label{bdrycond}
\end{equation}
where $z$ is a local holomorphic coordinate centered at $p_j$.   Nonpositive curvature of $h_0$ makes
the signs favorable so that one can find solutions by the method of barriers.  

As already explained, first in \cite{MW} and then in \cite{MZ}, the deformation theory is unobstructed in
these two  cases. (Actually, when $K=0$ there is a minor issue related to indeterminacy of scale which 
can be remedied by an area normalization.) This means that if $g$ is any hyperbolic or flat conic metric, 
and if we assign to $g$ its conic data
\begin{equation}
\frakd(g) = \{\fraks(g),  \frakp, \vec \beta\}, 
\label{conicdata}
\end{equation}
then for any data set near to this given one (subject to the constraint that $\chi(M,\vec \beta)$ either remains negative
or remains equal to $0$) there exists a unique solution of the problem with the same
curvature, and this solution depends smoothly on this data. The point of view in \cite{MW} is that if we fix the area, then 
as the cone angles vary the solution may change smoothly from hyperbolic to flat to spherical. 
This argument relies only on the surjectivity of the {\it scalar} operator $\Delta_g - 2K$, which is obvious when $K =-1$, 
and true once we factor out the constants when $K = 0$.  In \cite{MW} it is shown that this operator is also invertible
when $K = +1$ provided the cone angles are all less than $2\pi$, so that the moduli space of
solutions is smooth in this case too. As we show below, these arguments can be extended to handle the
case of spherical cone metrics for which some or all of the angles are greater than $2\pi$ provided 
we {\it assume} that $\Delta_g - 2$ is invertible, i.e., provided that $2 \not\in  \mathrm{spec}\,(\Delta_g)$. 

On the other hand, $2$ often does lie in the spectrum. For example, if $F: M \to \bS^2$
is a branched cover and $g_0$ is the round metric on $\bS^2$, then $F^* g_0$ is a spherical
conic metric on $M$, where the ramification points and ramification indices give the cone points and
cone angles (which are therefore all integer multiples of $2\pi$).  If $\phi$ is an eigenfunction
of the Laplacian on $\bS^2$ with eigenvalue $2$, then $F^* \phi$ is an eigenfunction on $M$
for the Friedrichs extension of $\Delta_g$, also with eigenvalue $2$.   As another example,
the football, with any cone angle, has eigenvalue $2$, as do certain connected sums 
along short geodesics of footballs with one another (see \S 5), or with these ramified covers.   
Thus there are many spherical cone metrics for which $2$ does lie in the spectrum. 

The recent work of Bin Xu and the second author~\cite{XuZhu} shows that coaxial metrics always have eigenvalue $2$ (see~\S\ref{ss:coaxial} for the definition of coaxial/reducible metrics). On the other hand, the works of Mondello and Panov~\cite{MP2}, and Eremenko, Gabrielov and Tarasov~\cite{EGT3}, indicate that there also exist non-coaxial metrics with eigenvalue $2$.

We now discuss other methods and results which have been used to study spherical cone metrics.

We begin by clarifying the existence theory when the cone angles are less than $2\pi$. 
By the conic Gau\ss-Bonnet formula~\eqref{e:GB}, if all the $\beta_{i}$ are less than $1$, then, assuming
$M$ is orientable, a spherical metric with these cone angles exists only if $M = \bS^{2}$.  A straightforward fact,
observed by Troyanov \cite{Tr2}, is that when $k=2$, a solution exists if and only if $\beta_{1}=\beta_{2}$, and 
in this case $(M,g)$ is a spherical football. A significantly more substantial result of his \cite{Tr} proves existence 
when $k\geq 3$ by a variational argument which involves a strengthening of the classical Moser-Trudinger 
inequality adapted to this conic setting. 
A solution exists in this case if and only if either $M \neq \bS^2$, or else $M = \bS^2$ and 
\begin{equation}
\beta_j-1 > \sum_{i \neq j} (\beta_i - 1) \quad \mbox{for each}\ j.
\label{Troy}
\end{equation}
A later result by Luo and Tian \cite{LT} shows that Troyanov's solution is unique. This `Troyanov condition' has been 
interpreted \cite{RossThomas} as a version of the famous K-stability condition in complex geometry.

Troyanov's argument relies heavily on the fact that under these angle conditions, the associated Liouville energy (for which
the PDE associated to this problem is the Euler-Lagrange equation) is bounded below, so that one can look for solutions as 
minima of this energy.  This argument also 
works in a very limited range of $\RR^k_+$ where some of the $\beta_i$ are greater than $1$. However, for most 
$\vec \beta$, the energy is unbounded below.  An early breakthrough was a generalization of Troyanov's variational method, due to Bartolucci and Tarantello~\cite{BT}, later generalized by Bartolucci, De Marchis and Malchiodi~\cite{BMM},
who prove the existence of minimax solutions with an arbitrary angle combination except away from a critical set of cone angles.
The critical points found by this approach are not (local) minima.  They use a subtle mountain pass
lemma, and along the way, assume crucially that $M \neq \bS^2$. The paper~\cite{BMM} also shows that 
in certain cases, solution with a given conic data set are not unique. This variational method has been pushed much further 
by Malchiodi and his collaborators, see \cite{BM, Carlotto, CM1, CM2, MRuiz} and the citations therein.  
A quite general result of this kind was announced recently by Carlotto and Malchiodi~\cite{Ma1, Ma2}, but details
have not appeared. 

A related method involves the computation of the Leray-Schauder degree for the curvature equation. We mention 
the work of Chen and Lin~\cite{CL1, CL2, CL3} and further papers with their collaborators~\cite{CCW, LW1, LW2}; these 
give the existence and nonexistence of solutions to the curvature equation when the angle parameters are away from a certain critical set. 

There is a classical approach to this problem involving complex analysis. Indeed, as already discussed, the 
special case when the cone angles are integer multiples of $2\pi$ is closely related to the  theory of ramified coverings of 
Riemann surfaces. Even here, the full story is not known, see~\cite{EG2, EGSV, Goldberg, Sch, Zhu18}. We also mention the papers 
of Eremenko \cite{Ere2} and Umehara-Yamada \cite{UY}, which give a complete description when 
$M=\bS^2$ and $k=3$, and~\cite{EG, EGT, EGT2} for some symmetric cases when $k=4$, \cite{ET} for the case of three noninteger angles and any number of integer angles. Recently Eremenko has also 
showed that the number of solutions is finite when $k=4$ and none of the angles is a multiple of $2\pi$~\cite{Ere19}. 
For  metrics with special monodromy, we also mention the recent papers by Xu et al.~\cite{CWWX, SCLX}.

A breakthrough using purely geometric (completely non-analytic!) methods was obtained recently 
by Mondello and Panov \cite{MP}. Their main result provides the generalization of the Troyanov region \eqref{Troy}, 
i.e., they describe a region $\mathcal M\! \mathcal P_k \subset \RR^k_+$ which characterizes the set of allowable 
cone angles of spherical cone metrics on $\bS^2$.  This region is described as
\begin{equation}
\mathcal M\! \mathcal P_k := \{ \vec\beta \in \RR^k_+:  d_1( \vec \beta - \vec 1, \ZZ^k_{\mathrm{odd}}) \geq 1\},
\label{MonPan}
\end{equation}
where $\ZZ^k_{\mathrm{odd}} = \{ \gamma \in \ZZ^k:  \sum \gamma_i \in 2\ZZ + 1\}$ and $d_1$ is the
$\ell^1$ norm on $\RR^k$. 

Their main result, a tour-de-force in classical geometry, is that any point in the interior of $\mathcal M \! \mathcal P_k$ 
is the vector of cone angles for at least one spherical cone metric on $\bS^{2}$; they are not able, however, to specify the 
locations of the conic points $p_j$, and do not address whether solutions exist when $\vec \beta$ is on the boundary 
of this region. After partial results of Dey~\cite{Dey} and Kapovich~\cite{Ka}, a complete answer was obtained by Eremenko~\cite{Ere1} on which $\vec \beta \in \partial \mathcal M\!\mathcal P_k$ are possible.

A more recent paper by Mondello and Panov~\cite{MP2} extends the result in~\cite{MP} to surfaces with higher genus 
and shows that when $M \neq \bS^2$, there exists a spherical conic metric for {\it any} $\vec \beta$ with
$\chi(M, \vec \beta) > 0$. They also show that it is not always possible to prescribe the underlying conformal class 
on $M$.  They also show that the moduli space of solutions when $M=\bS^{2}$ and $k$ is large has many 
connected components.   They identify a set of cone angle vectors $\vec \beta$, called the `bubbling set', 
near which families of solutions are expected to diverge. They prove that away from this set, the moduli space 
is smooth and the forgetful map which carries a spherical cone metric to its underlying marked conformal structure is 
proper.  This bubbling set is in fact the same as the set of critical angles appearing in the variational approach in~\cite{Ma2}.
Furthermore, when $M=\bS^{2}$, this bubbling set {{\it strictly contains} the set of cone angle vectors associated to 
spherical cone metrics with coaxial monodromy, see \cite{Ere1}. 

As explained earlier, we approach this problem via deformation theory, and focus on whether it is possible 
to freely deform the unreduced conic data near given set corresponding to an initial spherical cone metric $g$.  
The answer to this depends on the 
spectral behavior of the Friedrichs extension of the Laplacian $\Delta_g$.  Following \cite{MW} and \cite{MZ}, we 
prove that if $2$ does not lie in the spectrum of this operator, then the answer to this question is affirmative.  This 
relatively easy result motivates our key problem, which is to understand the local deformation theory when $2$ 
does lie in the spectrum. Our main result states that if we allow the cone points $p_j$ to break apart into clusters,
then even near these degenerate metrics there is a submanifold in the space of all conic data 
whose elements correspond to a branch in the space of spherical cone metrics.

We have already noted our central use of the extended configuration space $\calE_K$ and extended configuration family $\calC_K$,
both of which are defined and studied in our earlier paper \cite{MZ}. The former is a compactification of the space of $K$-tuples
of distinct points on $M$, while the latter is the bundle with fibre at $\hfrakq \in \calE_K$ the surface $M$ 
blown up at the points of $\frakq$.  Actually, the natural map $\calC_K \to \calE_K$ is a singular fibration (technically,
it is an example of a $b$-fibration), and the fibres over the boundary faces of $\calE_K$ are unions of surfaces with
boundary.  We describe this more carefully in the next section.   One of the main results in \cite{MZ} is that 
the families of hyperbolic and flat cone metrics with $k$ singular points extends to a polyhomogeneous family 
of fiberwise metrics on $\calC_k$. This is a sharp regularity statement: polyhomogeneity is a slight extension of 
the notion of smoothness which allows for series expansions with noninteger exponents.   In the cases studied 
in \cite{MZ}, existence was already known, but in the spherical setting that is no longer the case. Here 
it is not always possible to find a smooth family of spherical conic metrics near those with any given set of conic 
points $\frakp$.  Considering $\frakp$ as point on a corresponding face of $\tilde\calE_K$ (which is a space constructed from an extra blow up $\calE_{K}$, see \S 6), we show that there exists a 
smooth submanifold $X \subset \tilde\calE_K$ containing the initial $k$-tuple $\frakp$ as a coalescing limit such that 
spherical cone metrics exist for $K$-tuples $\hfrakq \in X$.

\section{Configuration spaces}\label{s:ekck}
There are two configuration spaces, $\calE_k$ and $\calC_k$ at the center of our construction. These are obtained 
by resolving the spaces $M^k$ and $M^k \times M$, respectively. These resolutions are obtained by the process of 
{\it real blowup}, i.e., where a $p$-submanifold $S$ of a manifold with corners $Z$ is replaced by its (inward 
pointing) spherical normal bundle.  (The $p$-submanifolds are the natural submanifolds in manifolds with corners 
for which tubular neighborhoods around them are represented by their normal bundles.)  We refer to \cite{MZ}
for a review of these notions, and for many further details about these spaces than we can recall here.

The key points we review here concern how the boundary faces of $\calE_{k}$ and $\calC_{k}$ encode information about the 
various ways in which subclusters of points can collide. 

\subsection{The extended configuration space $\calE_k$}
We start with $M^{k}$, the space of ordered $k$-tuples of not necessarily distinct points $p_1, \ldots, p_k \in M$. The extended 
configuration space $\calE_{k}$ is a canonically defined space defined by iteratively blowing up all the partial diagonals 
\[
\Delta_{\calI} = \{ \frakp = (p_1, \ldots, p_k) \in M^{k}:  p_{i} = p_{j}, \ \forall\ i,j \in \calI\},
\]
where $\calI$ is any subset of $\{1, \ldots, k\}$ with $|\calI| \geq 2$.  Thus, using the notation that $[X;S]$ denotes
the blowup of a manifold $X$ around a submanifold $S$, and compressing the fact that there is a chain of blowups, we write
\begin{equation}
{\calE_{k}}=[M^{k}; \cup_{\calI } \Delta_{\calI} ].
\end{equation}
The order of blowup is important, and we perform these in order of reverse inclusion, i.e., first blowing up 
the smaller partial diagonals, with larger $|\calI|$, see \cite{MZ}.   

The space $\calE_{k}$ is a manifold with corners; its boundary hypersurfaces $F_{\calI}$ are the `front faces' created by
blowing up each $\Delta_{\calI}$. The interior of $\calE_k$ is naturally identified with the open subset $\calU \subset M^k$
of $k$-tuples of distinct points. This identification extends smoothly to the blow-down map
$$
\beta_k: \calE_{k}\rightarrow M^k.
$$
Points of $\calE_k$ are sometimes denoted $\frakq$, so $\beta_{k}(\frakq) = \frakp$ is the underlying $k$-tuple
of points which may lie along one or more of the partial diagonals.  Finally, each face $F_{\calI}$ has a boundary
defining function, which we write as $\rho_{\calI}$. Thus $\rho_\calI(\frakq)$ measures the `clustering radius' of the
subcluster of points in $\frakp$ with indices in $\calI$. 

\subsection{The extended configuration family $\calC_k$}
We next describe the universal curve over $\calE_{k}$.  Consider the product $\calE_{k}\times M$; points in this space 
are $(\frakq, z)$, with $z$ lying in the fiber. The space $\calC_k$ is obtained by resolving the graph of the 
canonical multi-valued section $\sigma$ of this bundle defined as
$$
\bigcup_{\calI} \{(\frakq,z) \in \calE_{k}\times M: z \in \sigma^\calI(\frakp)\}. 
$$
If $\frakq \in F_\calI$, $\calI = (i_1, \ldots, i_r)$ (so $p_{i_1} = \ldots = p_{i_r}$), then $\sigma^\calI(\frakp)$ 
denotes the $(k-r+1)$-tuple obtained by adjoining this single `$r$-fold' point with 
the remaining $k-r$ points. Now define the coincidence set: 
\begin{equation}
F_{\calI}^\sigma=\{\rho_{\calI}=0, z\in \sigma^\calI(\frakp) \}.
\label{liftedincset}
\end{equation}
Abusing notation slightly, we write $F^\sigma_i$ for the nonsingular parts of the graph of $\sigma$, i.e.,
the sets $\{ z = p_i\}$, $i = 1, \ldots, k$ where $\frakp$ does not lie in any partial diagonal. 

The extended configuration family $\calC_k$ is defined as the iterated blowup
\begin{equation}
\calC_k = \big[ \calE_k \times M; \ \cup_{\calI} F^\sigma_\calI \big]
\label{extconfspace}
\end{equation}
where, once again, we blow up in order of reverse inclusion of the subsets $\calI$. This canonical space is again a manifold with corners. 
\subsection{The map $\calC_k \to \calE_k$} 
The trivial fibration $\calE_{k}\times M\rightarrow \calE_{k}$ lifts to a $b$-fibration:
\begin{equation}
\pi_{k}: \calC_k \longrightarrow \calE_k,
\end{equation}
whose geometry we now recall. 

If $\frakq$ lies in $\mathrm{int}\, \calE_k$, then $\pi_{k}^{-1}( \frakq):=M_{\frakp}$ is the surface $M$ blown up at the 
$k$ points of $\beta_k(\frakq) = \frakp=\{p_{1}, \dots, p_{k}\}$. This fiber is a surface with $k$ boundary components, 
each a copy of $\bS^1$.   However, if $\frakq\in F_{\calI}$, $|\calI| = r$, then the fiber $\pi_{k}^{-1}( \frakq)$ is a
`tied manifold', i.e., a union of surfaces with boundary identified in a certain pattern along their boundaries.
One of these surfaces is $M_{\beta_k(\frakq)}$, while the others are a certain number of copies of the hemisphere $\bS^2_+$
blown up at a collection of points. We usually write $\frakp = \beta_k(\frakq)$ below. 
The combinatorial structure of how these surfaces fit together encode the 
various regimes by which $r$ points can cluster. 

Let $\frakC_{\calI}$ denote the collection of new boundary faces generated by blowing up $F^\sigma_\calI$, cf. 
Figure~\ref{f:C2} for the simplest case $k=2$. 
\begin{figure}[h]
\includegraphics[width=\textwidth]{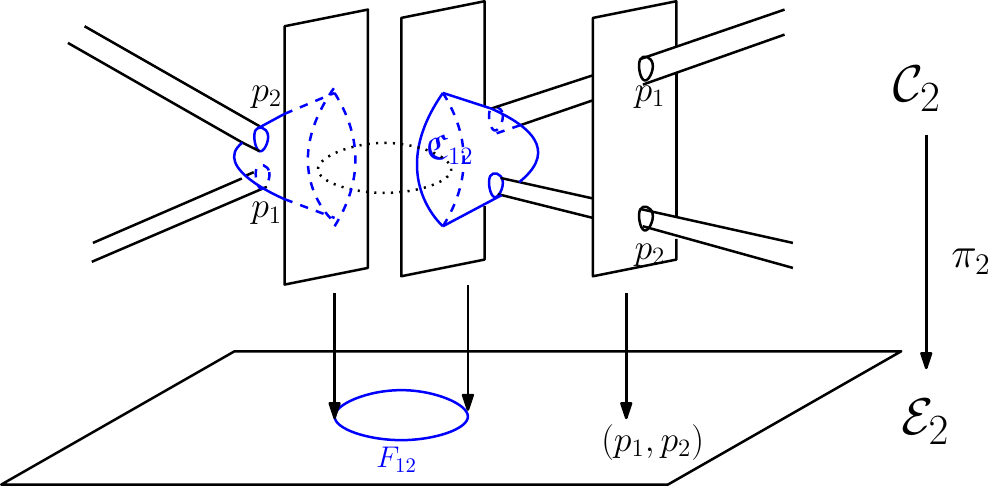}
 \caption{The singular fibration of $\calC_{2}\rightarrow \calE_{2}$. Here we removed the center of mass for $\frakp=(p_{1}, p_{2})$. 
The boundary face in the base, $F_{12}$, is a circle encoding the colliding direction of $p_{1}$ and $p_{2}$.}
 \label{f:C2}
\end{figure}
In this picture, $\frakC_{12}$ is the preimage of the central face $F_{12}$ of $\calE_2$; this fibers over $\bS^1$ (the
`direction of approach' of the pair $p_1, p_2$) and each fiber is a copy of $M_{\frakp}$ and 
$\bS^2_+$ blown up at two points. 

For larger $k$, if $\frakq$ lies on a boundary face of $\calE_{k}$, then the preimage $\pi_k^{-1}(\frakq)$ 
is a tower of hemispheres, each one attached to a previous (or lower) hemisphere at the circle boundary created 
by blowing up a point in that previous hemisphere. The lowest hemisphere in the chain is attached to $M_{\frakp}$ 
at the circle created by blowing up the point where the corresponding points have collided.   Altogether, this tower encodes how subclusters 
of $\frakp$ collide. The images of the nonsingular graphs $\{z=p_{i}\}$ are completely separated, each one intersecting 
one of these hemispheres (or else $M_{\frakp}$ if that point is not part of a cluster). 

We illustrate this further by considering the case $k=3$, see Figure~\ref{figurek=3}. Above a generic point $\frakq\in F_{123}$ the 
fiber $\pi_{k}^{-1}(\frakq)$ is a hemisphere blown up at three points attached to $M_{\frakp}$ at its outer boundary, much 
as in Figure~\ref{f:C2}. However, when $\frakq$ lies on $F_{123} \cap F_{12}$, for example, then the fiber is a tower of 
two hemispheres, cf.\ Figure~\ref{figurek=3} below. 
\begin{figure}[h]
\centering
\includegraphics[width = 0.5\textwidth]{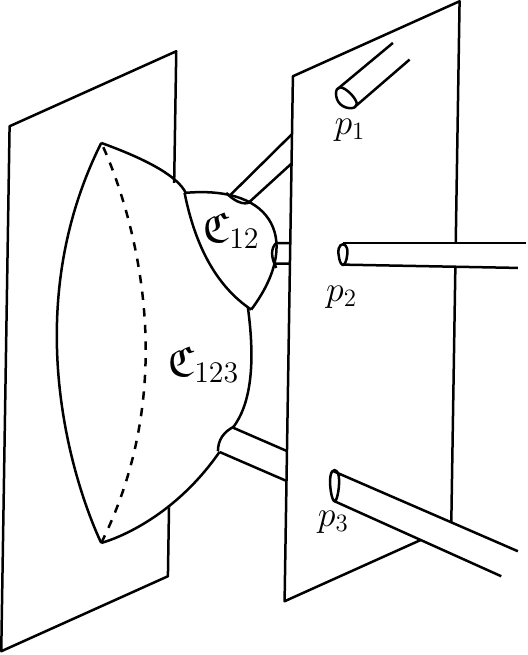}
\caption{One of the singular fibers in $\calC_{3}$, where two of the points collide faster than the third one}
\label{figurek=3}\end{figure}
Here the lower hemisphere, $\frakC_{123}$, is attached as before to $M_{\frakp}$, while the upper 
one, $\frakC_{12}$, is attached to the blowup of the point in $\frakC_{123}$ where $\{z = p_1\}$ and $\{z = p_2\}$ intersect;
note that the submanifold $\{z = p_3\}$ intersects $\frakC_{123}$ but not $\frakC_{12}$. This corresponds to 
the three points coalescing, but with points $1$ and $2$ closer than either is to point $3$. 
When $k$ is even larger, the fibers over points lying in the various edges and corners of $F_{1 \ldots k}$ are more complicated
towers of hemispheres which encode the way the $k$ points coalesce with certain subclusters coalescing faster. 

The (somewhat intricate) combinatorics of the boundary faces and corners of $\calE_{k}$ and $\calC_{k}$ are 
described carefully in~\cite[Chap 2]{MZ}. 

\subsection{Fiber metrics restricting to the boundary faces}
It is proved in~\cite{MZ} that the space $\calC_k$ fully captures the asymptotic behavior of families of flat or hyperbolic
conic metrics on $M$ as the cone points coalesce. More precisely, fix $\vec \beta=(\beta_{1}, \dots, \beta_{k})$ and 
parametrize the family of flat or hyperbolic metrics by elements $\frakq$ of the extended configuration space $\calE_k$.
When $\frakq$ lies in the interior of $\calE_k$, so the cone points are all distinct, then the corresponding metric is 
a metric on the fiber $\pi_k^{-1}(\frakq) = M_{\frakp}$. The main result is that this family of fiber metrics over the
interior of $\calE_k$ extends to a polyhomogeneous family of fiber metrics on $\calC_k$. Over $\mathrm{int}\, \calE_k$, 
this simply asserts that the constant curvature metric depends smoothly on $\frakq$, as already proved in \cite{MW}. 
However, when $\frakq$ lies in some boundary component of $\calE_k$, then $\pi_k^{-1}(\frakq)$ is the union 
of $M_\frakp$ (where now $\frakp = \beta_k(\frakq)$ contains only $k' < k$ distinct points)
and a tower of hemispheres over that $k'$-fold point. The family of fiber metrics restricts to a flat or hyperbolic 
(depending on the initial family) metric on $M_{\frakp}$. On each hemisphere $\frakC_{\calI}$ in this fiber, the restriction
is a flat metric with a certain number of interior conic singularities (at the points where `higher' hemispheres are attached) and 
with a {\it complete} conic singularity at its outer boundary. (Note that each of these metrics is flat regardless 
of whether the initial family is flat or hyperbolic.)   At the conic point (or rather, its $\bS^1$ blowup) in $M_{\frakp}$ 
where the points $\{p_{i}, i\in \calJ\}$ are all equal, the angle parameter equals
$$
\beta_{\calJ}:= 1 + \sum_{i\in \calJ}(\beta_{i}-1).
$$
At any of the other circular boundary components, either on $M_{\frakp}$ or on one of these inner hemispheres, 
where $\frakC_i$ intersects that face, the cone angle parameter is just $\beta_{i}$.  Finally, at the outer boundary of 
each hemisphere in $\frakC_{\calI}$, the metric is asymptotic to the large end of a flat cone with cone angle parameter $\beta_{\calI}$. 

The same description holds for {\it spherical} cone metrics on the fibers of $\calC_k$ so long as all the cone angles are 
less than $2\pi$ and $\vec \beta$ lies in the Troyanov region. (There is a minor exception when the metric above the 
central fiber is a spherical football.)  The restriction of this spherical metric family to each of the hemisphere faces in
$\frakC_{\calI}$ is still a flat metric, exactly as above.  

We shall extend this regularity result in~\S\ref{ss:poly} to include families of spherical cone metrics with at least some of the 
cone angles bigger than $2\pi$.

\section{The linearized Liouville operator} 
Our main analysis involves the Liouville operator
\[
N_{g_0}(u) := \Delta_{g_0} u + K_{g_0} - e^{2u}
\]
(recall that $\Delta = -\mathrm{div}\, \nabla$); solutions to $N_{g_0}(u) = 0$ correspond to 
spherical metrics $e^{2u}g_0$.   In this section we recall the 
basic mapping and regularity properties of its linearization 
\[
L_{g_0}  :=  \Delta_{g_0} - 2.
\]

\subsection{Function spaces}
Given a $k$-tuple of distinct points $\frakp \in \calU \subset M^k$, the blowup $M_{\frakp}$ is a surface
with $k$ boundaries, each a copy of $\bS^1$. Choose a local holomorphic coordinate $z$ near each conic point $p_j$,
with corresponding polar coordinates $(r,\theta)$. A conic differential operator of order $m$ on $M_{\frakp}$ is an 
operator of the form
\[
A = r^{-m} \sum_{j + \ell \leq m} a_{j \ell}(r,\theta) (r\del_r)^j \del_\theta^\ell,
\]
where each $a_{j\ell} \in \calC^\infty(M_\frakp)$.  It is called elliptic (in this conic category) if
$\sum_{j+\ell = m} a_{j\ell} \rho^j \eta^\ell \neq 0$ for $(\rho,\eta) \neq (0,0)$.   In suitable coordinates,
$g_0 = dr^2 + \beta^2 \sin^2 r\,  d\theta^2$ and 
$$
\Delta_{g_0} =  \del_r^2 + \frac{\cos r}{\sin r} \del_r + \frac{1}{\beta^{2}\sin^2 r} \del_\theta^2,
$$
or equivalently, 
$$
\Delta_{g_0} = r^{-2}\left( (r\partial_{r})^{2}+\beta^{-2} \partial_{\theta}^{2}  + \ldots \right) ,
$$
where the remainder terms are smooth multiples of $r^2\del_r$ and $r \del_\theta$, hence lower order.
Thus $\Delta_{g_0}$ is a conic elliptic operator. 

The detailed theory of conic elliptic operators is discussed in \cite{MW, MZ}, and described in complete detail in
\cite{Melrose-APS}, \cite{M-edge} and \cite{GilKrainerMendoza}. We review here the mapping and 
regularity properties of $\Delta_{g_0}$ on weighted $b$-H\"older spaces, and the closely related definition of 
the H\"older-Friedrichs domain. 

The most convenient scale of function spaces for conic operators are those with certain dilation invariance properties. 
\begin{definition}[$b$-H\"older spaces]
The space $\calC^{0,\al}_b(M_{\frakp})$ consists of all bounded functions on $M_{\frakp}$ which are in $\calC^{0,\alpha}$ 
in the interior of $M_{\frakp}$ and such that near each $p_j$, 
\[ 
[u]_{b; 0, \alpha} := \sup_{0 < R < R_0} \sup_{R \leq r,r' \leq 2R \atop (r,\theta) \neq (r',\theta')} \frac{|u(r,\theta) - u(r',\theta')|R^\al}{|(r,\theta)-(r',\theta')|^\al} \leq C,
\] 
with associated norm
\[
||u||_{b; 0, \alpha} = ||u||_{L^\infty} + [u]_{b;0,\alpha}.
\]
The space $\calC^{m,\al}_b(M_{\frakp})$ consists of all functions $u$ such that near each $p_j$,  $(r\del_r)^j \del_\theta^\ell u \in
\calC^{0,\al}_b$ when $j+\ell\leq m$. Finally, $r^\mu \calC^{m,\al}_b(M_{\frakp}) = \{u = r^\mu v: v \in \calC^{m,\al}_b(M_{\frakp})\}$. 
\end{definition}

Directly from the definition, 
\begin{equation}
\Delta_{g_0}:  r^\mu \calC^{m+2,\alpha}_b \longrightarrow r^{\mu-2} \calC^{m,\alpha}_b
\label{Lapbdd}
\end{equation}
is bounded for every $m \in \NN$ and $\mu \in \RR$.  

There are two possible choices for the space of `smooth' functions in this setting.
\begin{definition}[Conormality]  The space of conormal functions (of order $\mu$) is the intersection
\[
\calA^\mu(M_\frakp) =  \bigcap_{m \geq 0} r^\mu \calC^{m,\alpha}_b(M_\frakp) = \{u: |(r\del_r)^j \del_\theta^\ell u| \leq C_{j,\ell} r^\mu, \ 
\forall\ j, \ell \geq 0\}
\]
\end{definition}
\begin{definition}[Polyhomogeneity]
An index set $I$ is a countable discrete set $\{\gamma_i,N_i\} \subset \CC \times \NN$ with $\Re \gamma_i \to \infty$. 
A function $u$ is called polyhomogeneous with index set $I$ if $u \in \calA^\mu(M_\frakp)$ for some $\mu$ and 
\[ 
u \sim \sum_{i} \sum_{\ell = 0}^{N_i} u_{i,\ell}(\theta) r^{\gamma_i} (\log r)^\ell.
\] 
This is an asymptotic expansion in the classical sense, in that the difference between $u$ and any finite partial
sum of the terms on the right lies in $\calA^{\mu+N}$ where $N \to \infty$ depends on the largest index
in the partial sum. 
\end{definition}

We may also define the $b$-H\"older spaces, as well as the spaces of conormal and polyhomogeneous
functions on any compact manifold with corners $X$.  In this more general setting we replace the vector fields
$r\del_r$ and $\del_\theta$ which appear in the definitions above by the space of $b$-vector fields, $\calV_b(X)$,
which is the space of all smooth vector fields on $X$ which are tangent to all boundary faces. Thus if $q \in X$
lies on a corner of codimension $n$, then there is a coordinate system $(x_1, \ldots, x_n, y_1, \ldots, y_m)$,
$n+m = \dim X$, with each $x_i \geq 0$ and $y_j \in (-\epsilon, \epsilon)$. Near $q$, 
\[
\calV_b(X) = \{ V = \sum_{i=1}^n a_i(x,y) x_i \del_{x_i} + \sum_{j=1}^m  b_j(x,y) \del_{y_j},\ \ a_i, b_j \in \calC^\infty(X)\}.
\]
Then $\calC^{0,\alpha}_b(X)$ is defined via a H\"older seminorm similar to the one above, which is invariant
under the partial dilation $(x,y) \mapsto (\lambda x, y)$, and 
\begin{equation}\label{e:Holder}
\calC^{m,\alpha}_b(X) = \{u: V_1 \ldots V_\ell u \in \calC^{0,\alpha}_b
\ \forall\ \ell \leq m\ \mbox{and}\ V_j \in \calV_b(X)\}.
\end{equation}
If $H_1, \ldots, H_N$ are the boundary hypersurfaces of $X$, then we denote by $\rho_j$, $ j = 1, \ldots, N$, 
a smooth function which satisfies $\rho_j > 0$ on $X \setminus H_j$, and $\rho_j = 0$, $d\rho_j \neq 0$ 
on $H_j$. These are called boundary defining functions.  Using multi-index notation, we write
\[
\rho^\mu \calC^{m,\alpha}_b(X) =  \{u = \rho_1^{\mu_1} \ldots \rho_N^{\mu_N} v: v \in \calC^{m,\alpha}_b(X)\}
\]
and
\[
\calA^\mu(X) = \bigcap_{m \geq 0} \rho^\mu \calC^{m,\alpha}_b(X).
\]
Finally, an index family $\calI$ is an $N$-tuple of index sets $(I_1, \ldots, I_N)$, and $u$ is 
polyhomogeneous with index family $\calI$ on $X$ if it is conormal and has asymptotic expansion
with index set $I_j$ near $H_j$, with all coefficients conormal on $H_j$.  It is not hard to
prove that under these conditions, $u$ has a product-type expansion at the corners of $X$. 

\subsection{Indicial roots and mapping properties}
Sharp mapping and regularity for $L_{g_0}$ (or indeed any other conic elliptic operator), are naturally captured by
these spaces.  These properties are stated in terms of the set of indicial data associated to this operator.

\begin{definition}[Indicial roots]
The number $\gamma \in \CC$ is called an indicial root of multiplicity $N$ for a conic elliptic operator $A$ at $p_j$
if there exists some $\phi \in \calC^\infty(\mathbb{S}^1)$ such that $A(r^\gamma (\log r)^{N-1} \phi(\theta)) = 
\calO(r^{\gamma - 1}(\log r)^{N-1})$ (as opposed to the expected $\calO(r^{\gamma-2}(\log r)^{N-1})$), but this estimate fails if $N-1$ 
is replaced by $N$.

The set of functions $r^{\gamma} (\log r)^\ell \phi$, $\ell = 0, \ldots, N-1$ for which this improved estimate holds is called 
the indicial kernel of $A$ (at $p_j$ and for the indicial root $\gamma$). 

Write $\Gamma(A, p_j)$ for the set of all indicial roots of $A$ at $p_j$ and $\tilde{\Gamma}(A, p_j)$ for
the set $\{(\gamma,N-1): \gamma \in \Gamma(A, p_{j}),\ N = \mbox{logarithmic multiplicity at}\ \gamma\}$.
We often omit the $p_j$ in this notation to denote the union of these sets over all $p_j$.
\end{definition} 

The indicial roots for $L_{g_0}$ are straightforward to compute, cf. \cite[Section 5.1]{MW}: 
\begin{lemma}
$\Gamma(L_{g_0}, p_j)=\{\frac{k}{\beta_{j}}: k\in \ZZ\}$. The value $0$ is an indicial root of multiplicity two 
with indicial kernel $\{1, \log r\}$, while the other indicial roots have multiplicity $1$ and indicial kernel 
$\{r^{k/\beta_j } e^{\pm i k \theta}\}$.
\end{lemma}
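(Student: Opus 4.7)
My plan is to reduce the computation to the indicial equation of the leading part of $r^{2} L_{g_{0}}$ near each cone point $p_{j}$, exploiting the rotational symmetry of the model metric.

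First, using the formula given in the excerpt,
\[
\Delta_{g_{0}} = r^{-2}\!\left((r\partial_{r})^{2} + \beta_{j}^{-2}\partial_{\theta}^{2} + E\right),
\]
where $E$ is a sum of terms that are smooth multiples of $r^{2}\partial_{r}$ and $r\partial_{\theta}$, hence consist of $b$-vector fields that vanish to positive order at $r=0$. Since $-2$ is a zeroth order perturbation, the indicial operator of $L_{g_{0}} = \Delta_{g_{0}} - 2$ at $p_{j}$ (i.e.\ the part that survives when we extract the leading $r^{-2}$ asymptotics) is exactly
\[
I(L_{g_{0}}) = (r\partial_{r})^{2} + \beta_{j}^{-2}\partial_{\theta}^{2}.
\]
The $-2$ and the error $E$ only produce contributions of size $r^{\gamma-1}$ or better when applied to $r^{\gamma}(\log r)^{\ell}\phi(\theta)$, so they cannot affect the indicial roots.

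Next, separate variables via Fourier series in $\theta$. Writing $\phi(\theta) = e^{ik\theta}$ (for $k\in\ZZ$) and testing $r^{\gamma}e^{ik\theta}$, the indicial operator yields
\[
I(L_{g_{0}})\!\left(r^{\gamma} e^{ik\theta}\right) = \left(\gamma^{2} - \tfrac{k^{2}}{\beta_{j}^{2}}\right) r^{\gamma} e^{ik\theta}.
\]
Thus the indicial roots are precisely the solutions of $\gamma^{2} = k^{2}/\beta_{j}^{2}$ as $k$ ranges over $\ZZ$; equivalently, $\gamma \in \{k/\beta_{j}: k \in \ZZ\}$, confirming the claimed set.

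Finally, I would treat the multiplicity. For $k \neq 0$ the roots $\pm|k|/\beta_{j}$ are simple, and the only indicial kernel element at $\gamma = k/\beta_{j}$ is a scalar multiple of $r^{k/\beta_{j}} e^{\pm ik\theta}$. For $k=0$ the indicial polynomial $\gamma^{2}$ has a double root at $0$, so in addition to the constant $1$ (for which $L_{g_{0}}(1) = -2 = O(r^{0})$), I would check that $\log r$ also satisfies the improved estimate: $(r\partial_{r})^{2}\log r = r\partial_{r}(1) = 0$, and the perturbation $r^{-2}E\log r$ together with $-2\log r$ produces at worst $O(r^{-1}\log r)$, which matches the required improvement from the generic $r^{-2}\log r$ behavior. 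Hence $\{1, \log r\}$ span the indicial kernel at $\gamma = 0$, and the multiplicity is exactly two. No step here is a real obstacle; the only subtlety is verifying carefully that $\log r$ does lie in the indicial kernel and that no further logarithmic powers appear, which amounts to checking that the perturbation $E$ does not itself generate a new $r^{-2}\log r$ term.
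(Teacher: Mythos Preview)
Your argument is correct and is exactly the standard computation the paper has in mind: the paper does not prove this lemma in-line but simply cites \cite[Section 5.1]{MW}, where the same reduction to the indicial operator $(r\partial_r)^2 + \beta_j^{-2}\partial_\theta^2$ and Fourier separation in $\theta$ is carried out. Your treatment of the double root at $\gamma=0$ and the verification that $\log r$ lies in the indicial kernel are the expected steps, and your remark that one must check no further logarithmic power appears (i.e., that $(r\partial_r)^2(\log r)^2 = 2$ produces a genuine $r^{-2}$ term) correctly confirms the multiplicity is exactly two.
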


We now state the first basic mapping property of $L_{g_0}$: 
\begin{proposition}\cite[Proposition 9]{MW}
Suppose $\mu \not\in \Gamma(L_{g_0})$ and denote by $K_{-\mu}$ the nullspace of $L_{g_0}$ on 
$r^{-\mu}\calC^{2,\al}_b(M_{\frakp})$. Then $K_{-\mu}$ is finite dimensional and for any $f \in r^{\mu-2}\calC^{0,\alpha}_b$, 
there exists an element $h \in K_{-\mu}$ such that $Au = f - h$ for some $u \in r^{\mu}\calC^{2,\al}_b$. 
In particular, if $K_{-\mu} = \{0\}$, then 
\label{surjuptocokernel}
\begin{equation}
L_{g_0}: r^{\mu}\calC^{2,\al}_b(M_{\frakp}) \rightarrow r^{\mu-2}\calC^{0,\al}_b(M_{\frakp}).
\label{Lsurj}
\end{equation}
is surjective.
\label{pr:duality}
\end{proposition}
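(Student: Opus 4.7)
The plan is the standard parametrix method for elliptic $b$/conic operators, as developed in \cite{Melrose-APS} and used throughout \cite{MW, MZ}. Writing $L_{g_0}$ in $b$-form near each $p_j$, its leading part is $r^{-2}((r\del_r)^2 + \beta_j^{-2}\del_\theta^2)$, with indicial family $I(L_{g_0},s) = s^2 + \beta_j^{-2}D_\theta^2$ on $\bS^1$; this is invertible on the Mellin line $\{\mathrm{Re}\,s = \mu\}$ precisely when $\mu\notin\Gamma(L_{g_0})$. Inverting the indicial family there, localizing near each cone point, and patching with a standard interior pseudodifferential parametrix via a partition of unity produces a parametrix $Q: r^{\mu-2}\calC^{0,\alpha}_b \to r^\mu \calC^{2,\alpha}_b$ satisfying $L_{g_0}Q = I - R_1$ and $QL_{g_0} = I - R_2$, where $R_1, R_2$ gain both regularity and a positive power of $r$ at each conic point, hence are compact by Arzel\`a--Ascoli. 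This yields the Fredholm property of $L_{g_0}$ between the stated spaces.

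To identify the cokernel with $K_{-\mu}$, I first upgrade regularity: any $u\in r^\mu\calC^{2,\alpha}_b$ with $L_{g_0}u \in r^{\mu-2}\calC^{0,\alpha}_b$ is polyhomogeneous, with leading behavior governed by the smallest indicial root $\gamma > \mu$, since intermediate Mellin coefficients can be peeled off using invertibility of $I(L_{g_0}, \gamma')$ for $\gamma' \in (\mu, \gamma)$. By the lemma above, $\Gamma(L_{g_0})$ is symmetric under $\gamma\mapsto -\gamma$, so $-\mu\notin\Gamma$ as well, and the same parametrix construction at weight $-\mu$ identifies $K_{-\mu}$ as the (finite-dimensional) kernel of a Fredholm operator; its elements enjoy the analogous improved decay. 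Then for $u\in r^\mu\calC^{2,\alpha}_b$ and $\phi\in K_{-\mu}$, Green's formula on $M_\frakp\setminus\bigcup_j\{r_j<\epsilon\}$ reads
\[
\int L_{g_0}(u)\,\phi\, dA - \int u\, L_{g_0}(\phi)\, dA = \lim_{\epsilon\to 0}\sum_j\oint_{r_j=\epsilon}(\phi\,\del_r u - u\,\del_r\phi)\, r\, d\theta,
\]
and the avoidance of indicial roots forces the boundary integrand to be $O(r^\delta)$ for some $\delta>0$, so the boundary contributions vanish in the limit. Thus the range of $L_{g_0}$ is $L^2(dA)$-orthogonal to $K_{-\mu}$. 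Conversely, given $f\in r^{\mu-2}\calC^{0,\alpha}_b$, the Fredholm alternative together with the formal self-adjointness of $L_{g_0}$ shows that after subtracting its $L^2(dA)$-projection $h$ onto $K_{-\mu}$, the remainder $f-h$ lies in the range; this gives the required decomposition and, when $K_{-\mu}=\{0\}$, outright surjectivity.

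The hardest step is the cokernel identification. Two delicate points combine: first, verifying the boundary terms in Green's formula truly vanish relies on polyhomogeneity plus the absence of $\log r$-type resonance, which is exactly what $\mu\notin\Gamma(L_{g_0})$ guarantees; second, translating between the $L^2(dA_{g_0})$-duality, under which $L_{g_0}$ is formally self-adjoint, and the $b$-density duality natural to the weighted H\"older framework requires matching the weight shifts correctly, since $dA_{g_0}$ differs from the $b$-density $\tfrac{dr}{r}d\theta$ by a factor of $r^2$. Once these are handled, the orthogonality relation $\int (f-h)\phi\, dA = 0$ precisely characterizes the range, completing the proof.
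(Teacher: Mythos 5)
Your proof follows the standard conic/$b$-calculus route — parametrix on the interior plus inversion of the indicial family on the Mellin line $\Re s = \mu$ to get Fredholmness, then Green's formula to identify the cokernel with $K_{-\mu}$ — and this is indeed the route taken in \cite{MW} (the paper itself only cites \cite[Prop.~9]{MW} and gives no independent proof). The first half is fine in outline; two points, however, need more than you give.

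First, the identification of the cokernel. The sentence ``the Fredholm alternative together with the formal self-adjointness of $L_{g_0}$ shows that after subtracting its $L^2(dA)$-projection $h$ onto $K_{-\mu}$, the remainder $f-h$ lies in the range'' is not yet an argument. In a Banach space the cokernel of a Fredholm map is the annihilator of the range in the \emph{dual} Banach space, and the dual of $r^{\mu-2}\calC^{0,\alpha}_b$ has no clean description in terms of $\calC^{2,\alpha}_b$ functions; formal self-adjointness alone does not identify that annihilator with $K_{-\mu}$. The standard way to close this is to first run the Fredholm theory on the weighted $L^2$-Sobolev scale — where $L_{g_0}$ is genuinely self-adjoint for the $dA$-pairing, so Hilbert-space Fredholm theory identifies $\mathrm{coker}$ with the kernel of $L_{g_0}$ at the complementary weight — and then to bootstrap those $L^2$ kernel elements up to $r^{-\mu+\delta}\calC^{2,\alpha}_b$ using Schauder estimates and the partial expansion of Proposition~\ref{partexp}, at which point they coincide with $K_{-\mu}$, and the H\"older problem is solved by solving the $L^2$ one and upgrading. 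Your last paragraph shows you see the density/weight mismatch, but you do not actually carry out the $L^2\to$H\"older bridge, and without it the decomposition $f = L_{g_0}u + h$ is not established.

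Second, a smaller point: writing $L_{g_0}Q = I - R_1$ with $R_1$ gaining \emph{both} regularity and a power of $r$ is the conclusion of the full (large) $b$-calculus parametrix construction, not merely of ``inverting the indicial family and patching with an interior parametrix.'' The naive patched parametrix has a residual term that is smoothing but carries no weight gain; one must then iterate away the indicial error (using invertibility of $I(L_{g_0},s)$ on the strip, not just on the single line $\Re s=\mu$) to produce the decaying remainder. This is routine but deserves at least a sentence, since the compactness of $R_1$ on the weighted H\"older space hinges precisely on that weight gain.
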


We require an extension of this result, motivated by the following consideration.  Suppose $\mu$ is a weight such that \eqref{Lsurj} is
surjective.  If $\mu < 0$, this result may be of limited use in the nonlinear problem simply because the 
Liouville operator does not act nicely on functions which are unbounded near $r=0$. However, if the
right hand side does not blow up as quickly as $r^{\mu-2}$ then we can say more:
\begin{proposition}
Suppose that $\mu' > \mu$, with neither value an indicial root, and $L_{g_0}u = f$ for some $f \in r^{\mu'-2}\calC^{0,\alpha}_b$ 
and $u \in r^\mu \calC^{2,\alpha}_b$. Then 
\[
u = \sum_{j=J_1}^{ J_2}  r^{j/\beta} (a_j \cos j\theta + b_j \sin j\theta) + \tilde{u}
\]
for some constants $a_j, b_j$. Here $J_1$ is the smallest integer greater than or equal to $\mu \beta$ and  $J_2$ is the largest 
integer less than $\mu' \beta$; if $J_1 \leq 0 \leq J_2$ then the term with $j=0$ should be replaced by $a_0 + b_0 \log r$. Finally,
the remainder term $\tilde{u}$ lies in $r^{\mu'} \calC^{2,\alpha}_b$. 
\label{partexp}
\end{proposition}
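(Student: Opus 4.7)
The plan is to reduce to a local analysis near a single conic point $p_j$ with cone angle parameter $\beta$, then extract each indicial root contribution $r^{j/\beta}$ one at a time using a step-up / root-crossing iteration. Choose a cutoff $\chi$ supported in a small neighborhood of $p_j$ and work with $\chi u$; the commutator $[L_{g_0},\chi]u$ is compactly supported away from $r=0$ and interior Schauder estimates control that contribution. Hence I may assume $u$ is supported in a punctured disk around $p_j$ with polar coordinates $(r,\theta)$ and cone angle $2\pi\beta$.

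The technical workhorse is the following \emph{step-up lemma}: if $[\mu_1,\mu_2]\cap \Gamma(L_{g_0},p_j)=\emptyset$, $u\in r^{\mu_1}\calC^{2,\alpha}_b$, and $L_{g_0}u\in r^{\mu_2-2}\calC^{0,\alpha}_b$, then $u\in r^{\mu_2}\calC^{2,\alpha}_b$. This is the standard $b$-calculus statement — it follows by Mellin transform in $r$ and contour shift, using that the indicial family $I(L_{g_0};\zeta)=\zeta^2+\beta^{-2}\partial_\theta^2 - 2 r^2(\cdots)$ is invertible on the strip between $-\mu_2$ and $-\mu_1$, or equivalently from the parametrix construction of \cite{Melrose-APS, GilKrainerMendoza}; it is essentially the quantitative version of Proposition~\ref{pr:duality}.

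To cross an indicial root $\gamma=j_0/\beta\in(\mu,\mu')$ I would: (i) apply the step-up lemma to push $u$ to $r^{\gamma-\epsilon}\calC^{2,\alpha}_b$ for any $\epsilon>0$; (ii) Fourier decompose in $\theta$ and inspect the $\pm j_0$ modes. Each mode $u_{\pm j_0}(r)$ solves a second-order ODE whose model at $r=0$ is $(r\partial_r)^2-j_0^2/\beta^2$, with fundamental system $\{r^{j_0/\beta},r^{-j_0/\beta}\}$ (or $\{1,\log r\}$ when $j_0=0$). The decay of $u_{\pm j_0}$ rules out the $r^{-j_0/\beta}$ piece, and variation of parameters against $f_{\pm j_0}\in r^{\mu'-2}\calC^{0,\alpha}$ produces a particular solution of order $r^{\min(\mu',\gamma+2)-\epsilon}$. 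The remaining freedom is precisely the scalar $c_\pm r^{j_0/\beta}$, and these scalars define $a_{j_0},b_{j_0}$ (with the obvious logarithmic modification when $j_0=0$). Subtracting $v=u-r^{\gamma}(a_{j_0}\cos j_0\theta+b_{j_0}\sin j_0\theta)$, one computes $L_{g_0}v=f-L_{g_0}(r^\gamma(\cdots))$, and since $r^\gamma\cos j_0\theta$ is annihilated by the model, the correction $(L_{g_0}-\text{model})(r^\gamma\cos j_0\theta)$ lies in $r^\gamma\calC^\infty=r^{(\gamma+2)-2}\calC^\infty$; therefore $L_{g_0}v\in r^{\mu'-2}\calC^{0,\alpha}_b$ still. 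Now the step-up lemma applied to $v$ improves it to $r^{\gamma+\delta}\calC^{2,\alpha}_b$ for some $\delta>0$, and one repeats with the next indicial root.

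Because there are only finitely many indicial roots $j/\beta\in[\mu,\mu')$, namely $J_1\leq j\leq J_2$, this iteration terminates after finitely many steps, yielding the claimed expansion with $\tilde u\in r^{\mu'}\calC^{2,\alpha}_b$. The main obstacle is the compatibility of the Fourier mode extraction with the b-H\"older scale: the modes $u_{\pm j}$ are $L^2$-based objects, while the target norm is pointwise H\"older. The cleanest way around this is to treat the extraction purely as identifying finitely many complex scalars $a_{j_0},b_{j_0}$, then to transfer all regularity back to $v=u-(\text{indicial term})$ via the step-up lemma itself, which outputs $\calC^{2,\alpha}_b$ estimates from $\calC^{0,\alpha}_b$ estimates on $L_{g_0}v$. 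Since the number of extracted terms is bounded by $(\mu'-\mu)\beta+1$, no convergence issues arise.
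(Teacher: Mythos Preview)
The paper does not give a proof of this proposition; it simply states the result and remarks that it ``is a regularity statement,'' deferring to the general machinery of conic/$b$-elliptic operators developed in \cite{Melrose-APS, M-edge, GilKrainerMendoza} and summarized in \cite{MW, MZ}. Your outline is precisely the standard argument behind that machinery --- localize, invoke the step-up lemma via Mellin transform/contour shift on indicial-root-free strips, and peel off one indicial term at a time --- so there is nothing to compare against and your approach is the intended one.

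Two small points worth tightening. First, your expression for the indicial family, ``$I(L_{g_0};\zeta)=\zeta^2+\beta^{-2}\partial_\theta^2 - 2 r^2(\cdots)$,'' should not contain $r$: the indicial family is exactly $\zeta^2+\beta^{-2}\partial_\theta^2$ acting on $\calC^\infty(\bS^1)$, and the zero-order term $-2$ (as well as the curvature corrections to $\Delta_{g_0}$) is two orders lower and does not enter. Second, in your root-crossing step you assert that after subtracting the indicial term the right-hand side still lies in $r^{\mu'-2}\calC^{0,\alpha}_b$. What you actually get is $L_{g_0}(r^{\gamma}e^{\pm i j_0\theta})\in r^{\gamma}\calC^\infty_b$, so if $\gamma<\mu'-2$ (which can happen when $\mu'-\mu>2$) the new right-hand side is only in $r^{\gamma}\calC^{0,\alpha}_b$. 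This is harmless for the iteration --- you simply step up to $\min(\mu',\gamma+2)$ rather than $\mu'$ and keep going --- but the bookkeeping should reflect it. In the paper's sole application ($\mu=-\epsilon$, $\mu'=2$) this issue does not arise.
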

This is a regularity statement: if the right hand side decays faster than expected, then the
solution has a partial expansion as $r \to 0$. 

A special case of particular importance is when $\mu = -\epsilon < 0$ and $\mu' = 2$ (so $\mu'-2 = 0$). 
We are thus searching for a solution $u \in r^{-\epsilon} \calC^{2,\alpha}_b$ to $L_{g_0} u = f$, where $f \in \calC^{0,\alpha}_b$;
Since $K_\epsilon$, the nullspace of $L_{g_0}$ on $r^\epsilon \calC^{2,\alpha}_b$, is trivial, Proposition \ref{surjuptocokernel} may
be applied to obtain that \eqref{Lsurj} is surjective, hence we may always find a solution $u$ to $L_{g_0}u = f \in \calC^{0,\alpha}_b$. 
Proposition \ref{partexp} states that 
\[
u = a_0 + b_0 \log r + \sum_{j=1}^J (a_j \cos j\theta + b_j \sin j\theta) r^{j/\beta} + \tilde{u}, 
\]
where $J$ is the largest integer strictly less than $2\beta$ and $\tilde{u} \in r^2 \calC^{2,\alpha}_b$. 

A familiar classical construction is to characterize the domain of $L_{g_0}$ as an unbounded operator on $L^2(M_\frakp)$.  
More specifically, $L_{g_0}$ is symmetric and semibounded on $\calC^\infty_0( M \setminus \frakp)$, and hence there 
is a canonical {\it Friedrichs domain} $\calD_{\mathrm{Fr}}(L_{g_0})$, which is a dense subspace in $L^2$ on which $L_{g_0}$ 
is self-adjoint and has the same lower bound.  This is the set of all functions $w\in L^2$ such that both $\nabla w, L_{g_0} w  \in L^2$.
From the asymptotic expansion above, this last condition implies that the $\log r$ term is absent.  Accordingly, we make 
the following
\begin{definition}
The H\"older-Friedrichs domain of $L_{g_0}$ is the space
$\calD^{m,\al}_{\mathrm{Fr}}(L_{g_0}) = \{u \in \calC^{m,\al}_b(M_{\frakp}):  L_{g_{0}} u \in \calC^{m,\al}_b(M_{\frakp}) \}.$
\label{HF}
\end{definition}
By the results above, 
\[
u \in \calD^{m,\al}_{\mathrm{Fr}}(L_{g_0}) \Longrightarrow u = a_0 + \sum_{j=1}^J (a_j \cos j\theta + b_j \sin j\theta) r^{j/\beta} + \tilde{u}, 
\]
where $J$ is as before and $\tilde{u} \in r^2 \calC^{m+2,\al}_b$. 

\subsection{Deformation theory -- the unobstructed case}
Let $g_0$ be a spherical cone metric with conic data $(\fraks(g_0), \frakp, \vec \beta)$, and let $\SCM_k$ 
denote the set of all spherical cone metrics with the same $\fraks(g_0)$. We show in this section that 
if $2 \not\in \mathrm{spec}\,(\Delta_{g_0})$, then the map $\SCM_k \to M^k \times \RR^k_+$ is a local 
diffeomorphism near $g_0$. In other words, the space of spherical cone metrics $g$ near $g_0$ with a 
fixed unmarked conformal class is smoothly parametrized by the data $(\frakp', \vec \beta')$ near 
$(\frakp, \vec \beta)$.  It is also the case that the dependence on the underlying unmarked conformal class
$\fraks$ is smooth. This argument is the same as the one in \cite{MW}.  That paper assumes that all 
cone angles are less than $2\pi$, in which case it turns out to be automatic that $L_{g_0}$ is invertible.  
If some or all cone angles are greater than $2\pi$, we must assume the invertibility of this operator to reach 
the same conclusion.  We review these arguments in this section and prove the
\begin{theorem}\label{t:no2}
Let $g$ be a spherical conic metric with conic data $(\fraks, \frakp, \vec \beta)$, and suppose that 
$2 \not\in \mathrm{spec}\,(\Delta_g)$. Then for each fixed constant curvature metric 
$\fraks'$ sufficiently near $\fraks$ representing a slice in the space of unmarked conformal structures, 
there exists a neighborhood $\calV$ of $(\frakp, \vec \beta)$ in 
$\mathrm{int}\, \calE_k \times \RR^k_+$ and a neighborhood $\calW$ containing $g$ in the space of spherical 
conic metrics with the same unmarked conformal class, such that the map assigning to $g' \in \calW$ its 
conic data $(\frakp', \vec \beta')$ is a diffeomorphism $\calW \to \calV$.   This map also depends
smoothly on $\fraks'$. 
\end{theorem}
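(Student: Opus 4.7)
The plan is to apply the Banach space implicit function theorem to the Liouville equation, parametrized by the conic data $\theta = (\fraks', \frakp', \vec\beta')$; this follows the argument of \cite{MW}, the only new ingredient beyond the $\beta_j < 1$ case being that the invertibility of the linearization $L_{g_0} = \Delta_g - 2$ must now be imposed as the separate spectral hypothesis $2 \not\in \spec(\Delta_g)$.

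First I would build a smooth family of reference spherical cone metrics $g_0(\theta)$ with $g_0(\theta_0) = g$ by combining a smooth slice $\fraks' \mapsto h_{\fraks'}$ of smooth constant curvature representatives, a smooth family of diffeomorphisms $\Phi_{\frakp'}: M \to M$ supported in disjoint small disks around $\frakp$ and carrying $\frakp$ to $\frakp'$, and a localized conformal modification of $\Phi_{\frakp'}^* h_{\fraks'}$ near each $p_j'$ installing the prescribed cone angle $2\pi\beta_j'$. Pulling back by $\Phi_{\frakp'}$ identifies the relevant $b$-H\"older and H\"older-Friedrichs spaces with a single Banach space depending smoothly on $\theta$. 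Spherical cone metrics in the conformal class of $g_0(\theta)$ are then exactly $e^{2u}g_0(\theta)$ with $u$ solving
\[
N(u,\theta) := \Delta_{g_0(\theta)}u + K_{g_0(\theta)} - e^{2u} = 0,
\]
and $N(0,\theta_0) = 0$ by hypothesis. The map $N$ is smooth from a neighborhood of $(0,\theta_0)$ in $\calD^{m,\al}_{\Fr} \times U$ to $\calC^{m,\al}_b$, because bounded $b$-H\"older functions form an algebra on which the exponential acts smoothly, and its $u$-linearization at the base point is $L_{g_0}$.

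The key analytic step is to prove that $L_{g_0}: \calD^{m,\al}_{\Fr}(L_{g_0}) \to \calC^{m,\al}_b(M_\frakp)$ is a Banach space isomorphism. Since $M_\frakp$ is compact and the $L^2$-Friedrichs extension of $\Delta_g$ is self-adjoint with discrete spectrum, the hypothesis $2 \not\in \spec(\Delta_g)$ gives a bounded inverse $(\Delta_g - 2)^{-1}: L^2 \to \mathrm{Dom}_{\Fr}(\Delta_g)$. Injectivity on $\calD^{m,\al}_{\Fr}$ is immediate since that space embeds in $\mathrm{Dom}_{\Fr}$, so a nontrivial kernel element would be a Friedrichs $2$-eigenfunction. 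For surjectivity, given $f \in \calC^{0,\al}_b$, apply Proposition~\ref{pr:duality} at a weight $\mu = -\epsilon$ with $\epsilon > 0$ small enough that no indicial root lies in $(-\epsilon,\epsilon)$; the obstructing kernel $K_\epsilon$ on $r^\epsilon\calC^{2,\al}_b$ consists of functions that, together with their conormal gradients, are $L^2$-integrable (from the $r^\epsilon$ decay), hence lie in $\mathrm{Dom}_{\Fr}(\Delta_g)$ and are Friedrichs $2$-eigenfunctions, so $K_\epsilon = 0$. Thus $L_{g_0}u = f$ is solvable in $r^{-\epsilon}\calC^{2,\al}_b$, and Proposition~\ref{partexp} with $\mu' = 2$ produces a partial expansion for $u$ whose $\log r$ coefficient must vanish by comparison with the $L^2$-Friedrichs solution $(\Delta_g - 2)^{-1}f$ (which itself has no $\log r$ because $\nabla \log r \notin L^2$); hence $u \in \calD^{m,\al}_{\Fr}$.

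With the isomorphism in hand, the Banach space implicit function theorem produces a unique smooth solution map $\theta \mapsto u(\theta)$ with $u(\theta_0) = 0$ and $N(u(\theta),\theta) \equiv 0$. The family $e^{2u(\theta)}g_0(\theta)$ provides the desired smoothly varying spherical cone metrics with prescribed conic data $\theta$, and the assignment of data to metric is its smooth inverse on a neighborhood $\calW$ of $g$, yielding the claimed diffeomorphism $\calW \to \calV$ with automatic smooth dependence on $\fraks'$ as a parameter. The principal obstacle in this plan is the invertibility argument of the third paragraph, specifically ensuring the vanishing of the $\log r$ coefficient to place the solution in the H\"older-Friedrichs domain; once this is in hand, the rest is a standard IFT setup.
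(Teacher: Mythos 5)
Your overall strategy matches the paper's: build a smooth reference family of conic metrics parametrized by the conic data and apply the Banach-space implicit function theorem, with the spectral hypothesis giving invertibility of $L_{g_0}$ on the H\"older--Friedrichs domain. The paper phrases the IFT step a little differently --- it establishes surjectivity of the linearization in the $\zeta$ and $u$ slots \emph{jointly}, using its opening computation that $\vec\beta$- and $\frakp$-variations produce the indicial solutions $\log r$ and $r^{-j/\beta}\cos j\theta$, etc., and then argues that the kernel projects isomorphically onto the parameter space --- but in the unobstructed case this is equivalent to your direct claim that $L_{g_0}\colon \calD^{m,\alpha}_{\Fr}\to\calC^{m,\alpha}_b$ is an isomorphism; the paper's framing is chosen so that it carries over to the obstructed case of \S 7--\S 8.

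The one place you are on thin ice is the surjectivity step. Proposition~\ref{pr:duality} produces $u\in r^{-\epsilon}\calC^{2,\alpha}_b$ with $L_{g_0}u=f$, but only modulo the nullspace $K_{-\epsilon}$ of $L_{g_0}$ on $r^{-\epsilon}\calC^{2,\alpha}_b$. That nullspace is \emph{not} controlled by your hypothesis: it is generically strictly larger than $K_\epsilon$, precisely because the double indicial root $0$ (with indicial kernel $\{1,\log r\}$) lies between the weights $\epsilon$ and $-\epsilon$, and it contains non-Friedrichs distributional $2$-eigenfunctions with nonzero $\log r$ coefficients. So the assertion that ``$u$'s $\log r$ coefficient must vanish by comparison with $(\Delta_g-2)^{-1}f$'' does not follow: the two solutions differ by an element of $K_{-\epsilon}$, which may itself carry a $\log r$. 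The fix is to \emph{take} $u$ to be the Friedrichs solution $u_{\Fr}$ outright, note that its $\log r$ coefficient vanishes by the $H^1$ membership ($\nabla\log r\notin L^2$), and then cite a weighted Schauder bootstrap placing $u_{\Fr}$ in $\calC^{m+2,\alpha}_b$, hence in $\calD^{m,\alpha}_{\Fr}$; alternatively, argue that $L_{g_0}$ on $\calD^{m,\alpha}_{\Fr}$ is Fredholm of index zero so that injectivity alone forces bijectivity. Two small slips worth fixing: the requirement should be that $\pm\epsilon$ are not indicial roots (not that no indicial root lies in $(-\epsilon,\epsilon)$ --- $0$ always is one, with multiplicity two); and the reference family $g_0(\theta)$ you construct consists of \emph{conic}, not spherical, metrics with the prescribed data.
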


The proof relies on a preliminary computation which provides a link between the geometric and analytic parts
of this result.   Namely, we compute the derivative of a family of metrics $g(\epsilon)$ with varying conic data 
$(\frakp(\epsilon), \vec \beta(\epsilon))$. The relevant information is entirely local so we may as well work in a 
disk around one conic point; furthermore, the fact that the metrics are spherical rather than flat 
only adds higher order perturbations to the answer below. Thus it suffices to consider the family of flat metrics
\begin{equation}
g(\epsilon) =  |z + \epsilon w|^{2\beta(\epsilon)-2} |dz|^2.
\label{modelmetric}
\end{equation}
Here $\beta(\epsilon)$ is any smooth function with $\beta(0) = \beta$ and $w$ is a fixed complex number indicating
the direction of motion of the family of conic points. Then 
\[
g'(0) = \bigg(2 \beta'(0) \log |z| + 2(\beta(0) -1)\Re (w/z) \bigg) |z|^{2(\beta-1)} |dz|^2.
\]
In particular, 
\begin{eqnarray*}
& w = 0 \Rightarrow  g'(0) = 2\beta'(0)\log |z| g(0),   \\ 
& \beta'(0) = 0 \Rightarrow  g'(0) = 2 (\beta-1) \Re (w/z) g(0).
\end{eqnarray*}
We have used complex coordinates here, but switching to $r = |z|^{\beta}/\beta$, we have
\[
g'(0) = \left( c_0 \log r + (c_1 \cos \theta + c_1' \sin \theta) r^{-1/\beta} \right) g(0).
\]
The constants $c_0, c_1, c_1'$ depend on $\beta(0), \beta'(0)$ and $w$; the latter two encode the angle at which the singular
point moves in this deformation. Now recall that $\log r$ and $r^{-1/\beta} \cos \theta$, $r^{-1/\beta}\sin \theta$ are 
model solutions for the indicial problem.  This calculation shows that 
these particular solutions to the indicial equation arise as derivatives of certain (local) one-parameter 
families of conic metrics.

We capitalize on this as follows.  Choose local holomorphic coordinates near each $p_j$ in $\frakp(0)$. 
The neighborhood $\calV$ around $(\frakp, \vec \beta)$ in the space of conic data is defined as the 
product of $k$ copies of $B_w^2(0) \subset \RR^2$ and a ball $B^k_\gamma(0)$ in $\RR^k$ around $\vec \beta(0)$. 
A point 
\[
\zeta = (w_1, \ldots, w_k, \gamma_1, \ldots, \gamma_k) \in \calV
\]
corresponds to conic data 
\[
((p_1 + w_1, \ldots, p_k + w_k), (\beta_1 + \gamma_1, \ldots, \beta_k + \gamma_k)).
\]

Next, for each $\zeta \in \calV$, choose a conic (but not necessarily spherical) metric $\tilde{g}(\zeta)$
which has conic data $\zeta$. For example, we can glue together a fixed metric outside the union
of balls around the $p_k$ and some varying model metrics as in \eqref{modelmetric}.   We may
also choose a smooth family of diffeomorphisms $F_\zeta: M \to M$ with the following properties:
$F_\zeta$ is the identity outside some small neighborhood of the points $p_j$ and $F_\zeta(p_j) = p_j + w_j$.
(We may as well assume that the $F_\zeta$ only depend on the $w$ but not the $\gamma$ coordinates of $\zeta$.) 
Finally, define $g(\zeta) = F_\zeta^* \tilde{g}(\zeta)$.

The point of all of this is that $g(\zeta)$ is a smooth family of conic metrics which represents the conic data
$\zeta$, but where, using the diffeomorphism action on $M$, we have arranged that the cone points remain
fixed.   We refer to \cite{MW} for an explanation of what this means in terms of the Teichm\"uller theory.
One must also modify these local families into families which leave the underlying uniformizing
metric $\fraks(g(\zeta))$ fixed, but this is straightforward and we omit details. 

We have now reduced the problem to proving that there exists a family of functions $u(\zeta)$ which
lies in one of the (possibly weighted) H\"older spaces discussed earlier, which solves
\[
\Delta_{g(\zeta)} u(\zeta) +  K_{g(\zeta)} - e^{2 u(\zeta)} = 0, \quad u(0) = 0,
\]
and which depends smoothly on $\zeta$.   This is a straightforward application of the implicit function theorem, 
by virtue of the results of the last subsection.  Namely, 
consider the Liouville operator with base metric $g(\zeta)$ as a nonlinear operator
\[
\calN: \calV \times \calD^{0,\alpha}_{\Fr} \longrightarrow \calC^{0,\alpha}_b, \qquad
\calN(\zeta, u) = \Delta_{g(\zeta)} u + K_{g(\zeta)} - e^{2u}.
\]
This is a smooth mapping and by Propositions \ref{pr:duality} and \ref{partexp} and
the computation at the beginning of this proof, the linearization of this map is surjective. Noting
further that the linearization only in the second ($u$) slot is injective, we conclude that the kernel
projects isomorphically to the tangent space of $\calV$.  This uses our main hypothesis 
that $2$ does not lie in the nullspace of $\Delta_{g(0)}$; if it were not to hold, there would be an extra 
cokernel.   Note finally that we may also let the underlying nonsingular constant curvature metric vary in some slice 
representing the family of unmarked conformal structures; the family of solutions clearly 
depends smoothly on this extra parameter too. Altogether, this proves the local deformation theorem and 
the smoothness of the moduli space of spherical conic metrics under this spectral hypothesis.
  
\section{The locus of degenerate spherical cone metrics} 
The last section explains the importance of understanding when $2 \notin \mathrm{spec}\,(\Delta_g)$, 
or equivalently, when $L_g$ is invertible, for a spherical conic metric $g$.  We begin our discussion
of this case. 

We first recall a result from \cite{MW}: 
\begin{proposition}
Let $(M_{\frakp}, g)$ be a spherical conic metric with all cone angles in $(0,2\pi)$. Let $\lambda_1$ be the  
first nonzero eigenvalue of the Friedrichs extension of $\Delta_{g}$. Then $\lambda_1 \geq 2$, 
with equality if and only if $M$ is either the round $2$-sphere or else the spherical football 
(with constant Gauss curvature $1$). Apart from these cases, 
\begin{equation}
L_{g}: \calD^{m,\al}_{\mathrm{Fr}}(M_{\frakp}) \to \calC^{m,\al}_b(M_{\frakp})
\label{Frmap}
\end{equation}
is invertible. 
\label{eigenbound}
\end{proposition}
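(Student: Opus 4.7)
The plan is a Lichnerowicz--Obata style argument adapted to the conic setting, broken into three steps: a lower eigenvalue bound via Bochner with carefully controlled boundary terms at the cone points, an Obata-type rigidity argument for the equality case, and a Fredholm deduction of invertibility from the spectral gap using Propositions~\ref{pr:duality} and~\ref{partexp}.

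For the lower bound, let $u \in \calD_{\Fr}(\Delta_g)$ be an eigenfunction with $\Delta_g u = \lambda u$ and $\lambda > 0$. Applying Proposition~\ref{partexp} to the equation $L_g u = (\lambda-2)u$ yields the partial expansion
\[
u = a_{0,j} + (a_1^{(j)}\cos\theta + b_1^{(j)}\sin\theta)\,r^{1/\beta_j} + O(r^2)
\]
near each $p_j$ (with the $r^{1/\beta_j}$ term absent when $\beta_j \leq 1/2$), so that $|\nabla u|$ and $|\nabla^2 u|$ are bounded -- in fact vanishing -- as $r \to 0$, because $1/\beta_j > 1$ in the regime $\beta_j \in (0,1)$. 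On the truncated surface $M_\epsilon := M \setminus \bigcup_j B_\epsilon(p_j)$, which has smooth Riemannian structure with $\Ric = g$, the Bochner identity combined with the two-dimensional Kato inequality $|\nabla^2 u|^2 \geq \tfrac12(\Delta u)^2$ gives, after integration by parts,
\[
\tfrac12 \int_{M_\epsilon} (\Delta u)^2 \;\geq\; \int_{M_\epsilon} |\nabla u|^2 \,+\, (\text{boundary integrals on } \partial B_\epsilon).
\]
The boundary contributions are $O(\epsilon)$ times pointwise values of $|\nabla u|$ and $|\nabla^2 u|$ on $\partial B_\epsilon$, hence vanish as $\epsilon\downarrow 0$ by the expansion above. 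Passing to the limit and using $\int(\Delta u)^2 = \lambda^2 \int u^2$ together with $\int |\nabla u|^2 = \lambda \int u^2$ forces $\lambda \geq 2$.

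For the rigidity, equality $\lambda = 2$ saturates Kato pointwise, giving the Hessian equation $\nabla^2 u = -u \cdot g$ on $M\setminus\frakp$. Pick $p^*\in M$ where $u$ attains its maximum and normalize $u(p^*) = 1$. Integrating the Hessian equation along unit-speed radial geodesics from $p^*$ yields $u(q) = \cos d(p^*, q)$, and in geodesic polar coordinates $(r,\theta)$ centered at $p^*$ the off-diagonal component of the equation forces the warped-product form
\[
g = dr^2 + C^2 \sin^2 r\, d\theta^2,
\]
where $C = 1$ if $p^*$ is a smooth pole and $C = \beta_{p^*}$ if $p^*$ is conic. This metric is smooth on the open annulus $r\in(0,\pi)$, so no intermediate cone points may occur; the antipodal point $r = \pi$ is either a smooth pole (the round $\bS^2$) or a conic pole with cone angle forced to equal $2\pi C$ (a football with equal antipodal cone angles), establishing the rigidity.

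Finally, assume $(M_\frakp, g)$ is neither the sphere nor a football, so $\lambda_1 > 2$ and the $L^2$ kernel of $L_g$ is trivial. Injectivity of~\eqref{Frmap}: any $u\in\calD^{m,\alpha}_{\Fr}$ with $L_g u = 0$ is bounded, hence in $L^2$, and by the partial expansion in Section~4 has $|\nabla u|\in L^2$, so $u$ sits in the $L^2$-Friedrichs domain as a $2$-eigenfunction and vanishes. Surjectivity: given $f\in\calC^{m,\alpha}_b$, pick a small non-indicial weight $\epsilon > 0$; the cokernel $K_\epsilon = \ker(L_g)\cap r^\epsilon\calC^{2,\alpha}_b$ embeds in $L^2$ and is therefore trivial, so Proposition~\ref{pr:duality} with $\mu = -\epsilon$ produces $u\in r^{-\epsilon}\calC^{2,\alpha}_b$ solving $L_g u = f$, and Proposition~\ref{partexp} then upgrades the regularity so that $u\in\calD^{m,\alpha}_{\Fr}$. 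The main technical obstacle throughout is the justification of the Bochner integration by parts at the conic points; once this decay control is in hand, rigidity is a standard Obata-type analysis and invertibility reduces to Fredholm theory.
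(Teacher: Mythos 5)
Your proof follows the same Lichnerowicz--Obata route as the cited reference \cite{MW}: a Bochner--Kato inequality for the lower bound, rigidity from a pure-trace Hessian, and the spectral gap for Fredholm invertibility. The overall architecture is right, but two points need correction.

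First, the claim that $|\nabla^2 u|$ is bounded (let alone vanishing) near a cone point is false once $\beta_j > 1/2$: the term $r^{1/\beta_j}(a_1\cos\theta + b_1\sin\theta)$ in the partial expansion contributes a piece of size $r^{1/\beta_j - 2}$ to the Hessian, which blows up when $1/\beta_j < 2$. This does not actually derail the integration by parts, but the justification must be a direct boundary-flux estimate: the integrand $\partial_\nu|\nabla u|^2$ is $O(r^{2/\beta_j - 3})$ while $\partial B_\epsilon$ has length $O(\epsilon)$, so the boundary term is $O(\epsilon^{2/\beta_j - 2})\to 0$ because $\beta_j < 1$; similarly $\int_{\partial B_\epsilon} u\,\partial_\nu u = O(\epsilon^{1/\beta_j})\to 0$. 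You should replace the appeal to pointwise boundedness of $|\nabla^2 u|$ with this scaling computation.

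More substantively, the surjectivity step has a genuine gap. Proposition~\ref{partexp} with $\mu = -\epsilon$, $\mu' = 2$ permits a nonzero coefficient $b_0$ of $\log r$ in the expansion of the weighted solution $u\in r^{-\epsilon}\calC^{2,\alpha}_b$, and a nonzero $b_0$ places $u$ outside $\calD^{m,\alpha}_{\Fr}$ since $u$ would then be unbounded. Nothing in your argument forces $b_0 = 0$: the nullspace of $L_g$ on $r^{-\epsilon}\calC^{2,\alpha}_b$ is nontrivial (it contains solutions with logarithmic blow-up), so Proposition~\ref{pr:duality} by itself gives no control on which representative solution it returns. The clean repair, which is what \cite{MW} does, is to invert $L_g$ first on the $L^2$-Friedrichs domain via self-adjointness and the assumption $2\notin\mathrm{spec}(\Delta_g)$, and then apply conic elliptic regularity to upgrade the $L^2$ solution of $L_g u = f\in\calC^{m,\alpha}_b$ to $u\in\calD^{m,\alpha}_{\Fr}$; the $\log r$ term is then excluded automatically because $u$ and $\nabla u$ are already in $L^2$. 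A minor further remark: your Obata-style geodesic argument for rigidity glosses over what the cut locus of $p^*$ looks like on a conic surface; the route taken in \cite{MW} (and replayed verbatim in the lemma at the end of \S 8 of this paper) derives a Killing field $J\nabla u$ from the pure-trace Hessian, which handles the classification more cleanly.
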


We also record a small generalization of this.  The Liouville energy (referenced in \S 2) is the
functional
$$
E(u)=\frac{1}{2}\int (|\nabla u|^{2} + 2K_{g_{0}}u)dV_{g_{0}}-\frac{1}{2}\log \int e^{2u}\, dA_{g_{0}},
$$
cf. \cite{Ma2}.  The Euler-Lagrange equation for $E$ reduces to the Liouville equation \eqref{Liouville},
while the Hessian of $E(u)$ equals $\Delta_{g_{0}}-2-P_0$; here $P_0$ is the $L^2$ orthogonal projection off 
the constants (i.e., the lowest eigenmode for $\Delta_{g_0}$).  

In the approach to existence discussed in \S 2 using the calculus of variations, the direct
method to find minimizers of $E$ is successful in the `subcritical' case defined by the condition
$\chi(M, \vec \beta)<\min\{2, 2\min_{j}\beta_{j}\}$ (where the background metric $g_0$ has
angle parameters $\vec \beta$). This can occur even if some of the cone angles are greater
than $2\pi$. In this subcritical case, $E$ is bounded below and coercive, and
there is a unique minimizer $u$ which is nondegenerate; the metric $e^{2u}g_{0}$ is then 
spherical. In this case $L_g - P_0$ is invertible and of Morse index $0$, so in the language of the 
proposition above, $\lambda_1 > 2$.

\subsection{Metrics with reducible monodromy}\label{ss:coaxial}
Our first goal is to show that there must be many spherical cone metrics for which $2$ lies
in the (Friedrichs) spectrum of $\Delta_g$. Recall that the monodromy group of a spherical cone metric is defined by its developing map and is contained in $\operatorname{PSU}(2)$. If the monodromy is contained in a subgroup which lies in $\operatorname{U}(1)$ (up
to conjugation), then the metric is called {\em reducible}~\cite{CWWX} or {\em coaxial}~\cite{MP, Ere1}. In particular, any metric obtained by a branched cover of the sphere has trivial monodromy and is therefore reducible.  
\begin{proposition}
For any bounded set $B \subset \MP_{k}$   defined in \eqref{MonPan}, there exists a spherical cone metric $g$ on $\bS^{2}$ with cone angle 
parameters $\vec \beta \notin B$ such that $2 \in \mathrm{spec}\,(\Delta_g)$. When $k\geq 5$, $\vec \beta$ can 
be chosen to be in the interior of $\MP_{k}$.
\end{proposition}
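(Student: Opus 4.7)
My plan is to construct the required spherical cone metrics using the two families of examples already highlighted in the excerpt: pullbacks $F^{*}g_{\mathrm{round}}$ under branched covers $F\colon \bS^{2}\to\bS^{2}$, which yield spherical cone metrics with integer cone-angle parameters (the ramification indices) and for which any round-sphere eigenfunction with eigenvalue $2$ pulls back to an eigenfunction of $\Delta_{g}$ in the Friedrichs domain with the same eigenvalue; and coaxial (reducible) metrics, which by the Xu--Zhu theorem quoted in the excerpt automatically have $2\in\spec(\Delta_{g})$.

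For the first statement, fix $k$ and the bounded set $B$. I would exhibit $\bS^{2}$-to-$\bS^{2}$ branched covers of arbitrarily large degree $n$ with exactly $k$ ramification points. A direct way is to take a rational map with one critical point of high index together with $k-1$ generic simple critical points; for $n$ large compared to $k$, standard Hurwitz-existence arguments produce such a map, and Riemann--Hurwitz gives $\sum(\beta_{i}-1)=2(n-1)$, so at least one $\beta_{i}$ grows linearly in $n$. Thus $|\vec\beta|\to\infty$ as $n\to\infty$, and for $n$ sufficiently large $\vec\beta\notin B$.

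For the second statement, branched covers give only integer cone-angle parameters, which necessarily lie on $\partial\MP_{k}$; to reach the interior I would instead use a coaxial metric built from a multivalued developing map
\[
D(z)=\prod_{j=1}^{k-1}(z-z_{j})^{\alpha_{j}},\qquad \alpha_{j}\in\RR.
\]
The $\mathrm{U}(1)$-valued monodromy of $D$ makes the pullback of the Fubini--Study metric descend to a spherical cone metric on $\bS^{2}$ with cone points at $\{z_{1},\dots,z_{k-1},\infty\}$, cone-angle parameters $\beta_{j}=|\alpha_{j}|$ for $j<k$, and $\beta_{k}=|\sum_{j}\alpha_{j}|$. Varying the $\alpha_{j}$ and their signs produces a codimension-one \emph{coaxial locus} in $\RR^{k}_{+}$, the piecewise-linear hypersurface defined by $\beta_{k}=|\epsilon_{1}\beta_{1}+\dots+\epsilon_{k-1}\beta_{k-1}|$ for some $\epsilon_{j}\in\{\pm 1\}$. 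By the Xu--Zhu theorem every such metric has $2\in\spec(\Delta_{g})$.

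The heart of the argument, and the point where $k\geq 5$ enters, is to show that this coaxial locus meets $\mathrm{int}(\MP_{k})=\{d_{1}(\vec\beta-\vec 1,\ZZ^{k}_{\mathrm{odd}})>1\}$ in an unbounded set. Taking for instance $\alpha_{1}=\dots=\alpha_{k-1}=n+\tfrac{3}{5}$ produces $\vec\beta$ of arbitrarily large norm on the coaxial locus, and one then verifies by an explicit computation of $d_{1}(\vec\beta-\vec 1,\ZZ^{k}_{\mathrm{odd}})$ that the Mondello--Panov strict inequality is preserved uniformly in $n$ once $k\geq 5$, while for small $k$ the analogous computation shows that the distance collapses to $1$ in every sufficiently far-out direction. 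This combinatorial check on the interaction between the coaxial relation and the odd-sum integer set is the main obstacle; with it in hand the proposition follows by combining the two constructions.
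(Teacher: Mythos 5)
Your high-level strategy is the same as the paper's — invoke the Xu--Zhu result that coaxial (reducible) metrics have $2\in\spec(\Delta_g)$, then exhibit such metrics with $\vec\beta$ unbounded and (for $k\geq 5$) interior — and the branched-cover construction for the first assertion is essentially the integer-angle case of Eremenko's classification that the paper cites.

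However, your construction for the interior case has a genuine gap. The pullback metric $D^*g_{FS}$ for $D(z)=\prod_{j=1}^{k-1}(z-z_j)^{\alpha_j}$ does \emph{not} have conic singularities only at $\{z_1,\dots,z_{k-1},\infty\}$: it also has conic points at every zero of $D'$, and $D'/D=\sum\alpha_j/(z-z_j)$ has (generically) $k-2$ further zeros in $\CC$, each contributing a cone point of integer angle parameter $\geq 2$. So your construction produces a metric with $2k-2$ cone points, not $k$. Consequently the ``coaxial locus'' is not the codimension-one hypersurface $\{\beta_k=|\sum_j\epsilon_j\beta_j|\}$; rather it is the considerably more constrained set described by Eremenko's characterization, which the paper states in full: among the $k$ angle parameters, at least one must be a positive integer (there exists $1<m<k$ with $\beta_{m+1},\dots,\beta_k\in\NN$), the signed sum $k'=\sum_{i\leq m}\epsilon_i\beta_i$ must be a nonnegative integer, the derived quantity $k''$ must be a nonnegative even integer, and an additional balance inequality must hold. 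Your proposed family $\alpha_1=\dots=\alpha_{k-1}=n+\tfrac35$ yields $\vec\beta$ with every component non-integer, which therefore does \emph{not} admit a coaxial metric with $k$ cone points by this characterization; the cone point you are implicitly forgetting to count are exactly the extra integer-angle points forced by the critical points of $D$. The combinatorial check that the Eremenko-admissible set meets $\mathrm{int}(\MP_k)$ unboundedly when $k\geq 5$ — which you defer with a promissory note — is the substance of the second assertion; the paper also leaves this verification terse, but it works with the correct characterization, whereas yours works with the wrong one.
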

\begin{proof}
If a spherical conical metric $g$ is reducible, then $2 \in \mathrm{spec}\,(\Delta_g)$ and at least one such eigenfunction is generated by the developing map, see~\cite{XuZhu}. From~\cite{Ere1}, the angle condition that gives a reducible metric is unbounded, that is, for any bounded $B\subset \MP_{k}$ there exists at least one $\vec \beta$ outside $B$ that admits a reducible metric. In detail, there exists a reducible metric with angles $\vec \beta$ if and only if one of the following holds:
\begin{itemize}
\item All $\beta_{i}\in \NN$, $d_{\ell^{1}}(\vec \beta-\vec 1, \ZZ_{odd}^{n})=1$, and  $2\max_{i}(\beta_{i}-1)\leq \sum_{i=1}^{n} (\beta_{i}-1)$. In this case such a metric is a branched cover of $\bS^{2}$;
\item (Up to reordering) there exists $1< m <n$ such that $\beta_{1}, \dots \beta_{m}\notin \NN$, $\beta_{m+1}, \dots, \beta_{n}\in \NN$. Moreover $\vec \beta$ satisfies the following ``coaxial conditions'':
\begin{itemize}
\item There exists $\{\epsilon_{i}\}_{i=1}^{m}$ with $\epsilon_{i}\in\{\pm 1\}$ such that 
$$k'=\sum_{i=1}^{m}\epsilon_{i}\beta_{i}\geq 0.$$
\item  $k''=\sum_{i=m+1}^{n} \beta_{i}-n-k'+2\geq 0$ and $k''$ is even. 
\item If there exist integers $\{b_{i}\}$ whose greatest common divisor is 1 and such that $(\beta_{1}, \dots, \beta_{m}, \underbrace{1,\dots, 1}_{k'+k''})=\eta(b_{1}, \dots, b_{m+k'+k''})$, then 
$$2\max_{i=m+1}^{n}\beta_{i}\leq \sum_{i=1}^{m+k'+k''} b_{i}.$$
\end{itemize}
\end{itemize}
For the second condition above, we can also see that when $k\geq 5$ such metrics exist in the interior of  $\MP_{k}$.
\end{proof}

\subsection{Spherical cone metrics with a large number of small eigenvalues}
We now prove some more general results which use a spectral flow argument to show that there 
should be many examples of spherical cone metrics with $\vec \beta$ arbitrarily large
for which $\Delta_g$ has eigenvalue $2$. There are two  main steps.  The first is to show that for any $N > 0$, there
exists some $\vec \beta \in \MP_k$ and a spherical cone metric $g$ with this conic angle data
such that $\Delta_g$ has at least $N$ eigenvalues less than $2$.  The second is to show that if the space of spherical 
cone metrics has only finitely many connected components, then by spectral flow one can find such metrics with eigenvalue 2. 
If $2$ never lies in the spectrum of Laplacians of conic surfaces with `sufficiently large' cone angles, then one can find
$g_0$ and $g_1$ in the same connected component, satisfying that $\Delta_{g_j}$ has $N_j$ eigenvalues less than $2$, $j=1,2$, 
and there is a continuous family of spherical conic metrics between $g_0$ and $g_1$.  A simple spectral flow argument 
leads to a contradiction if $N_0 \neq N_1$, hence there must be a nonempty locus of spherical cone metrics
with large cone angles and with $2$ lying in  the spectrum of the Laplacian. Therefore, either the space of spherical cone metrics has infinitely many connected components, or there are infinitely many codimension-one strata with $2\in \mathrm{spec}\,(\Delta_g)$.

We begin with the analysis of the football with arbitrary cone angle.
\begin{lemma}\label{l:football}
The eigenvalues of $\Delta_g$ on the spherical football with cone angle $2\pi\beta$ are 
\begin{equation}\label{e:footballeigen}
\{(j/\beta + \ell)(j/\beta + \ell+1):\ j, \ell \in \NN \}.
\end{equation}
The eigenspace is simple when $j = 0$ since $\log r$ does not lie in the Friedrichs domain, with eigenfunction 
$P_{\ell}^{0}(\cos(r))$, while if $j > 0$, then the eigenspace is two dimensional and spanned by 
$P_{\ell}^{j/\beta}(\cos r) \cos (j\theta)$ and $P_{\ell}^{j/\beta}(\cos r) \sin (j\theta)$.
Here $P_{\ell}^{\nu}$ is the associated Legendre function of order $\ell$ and degree $\nu$.
\end{lemma}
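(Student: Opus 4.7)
The plan is to diagonalize $\Delta_g$ by separation of variables in the natural football coordinates and then identify the radial equation as an associated Legendre equation. Write $g = dr^{2} + \beta^{2}\sin^{2}(r)\, d\theta^{2}$ with $r \in (0,\pi)$ and $\theta \in \bS^{1}$, so that the two conic points are at $r = 0$ and $r = \pi$. In these coordinates
\[
\Delta_{g} = -\partial_{r}^{2} - \cot(r)\,\partial_{r} - \frac{1}{\beta^{2}\sin^{2}(r)}\,\partial_{\theta}^{2}.
\]
Since $\theta$ is a circle variable, Fourier decompose any candidate eigenfunction as $u = \sum_{j \in \ZZ} R_{j}(r)\, e^{i j \theta}$. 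Each angular mode reduces the eigenvalue equation $\Delta_{g} u = \lambda u$ to the radial ODE
\[
-R_{j}'' - \cot(r)\, R_{j}' + \frac{j^{2}}{\beta^{2}\sin^{2}(r)}\, R_{j} = \lambda\, R_{j}.
\]

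Next I would substitute $x = \cos(r)$, converting this to the standard associated Legendre equation
\[
(1-x^{2}) R'' - 2x R' + \Bigl(\lambda - \frac{(j/\beta)^{2}}{1-x^{2}}\Bigr) R = 0
\]
with order $m = j/\beta$. Writing $\lambda = \nu(\nu+1)$, the two-dimensional solution space is spanned (generically) by the associated Legendre functions $P_{\nu}^{j/\beta}(x)$ and $Q_{\nu}^{j/\beta}(x)$. The key step is now a Friedrichs-type regularity analysis at each cone point. Near $r = 0$, the indicial roots of the radial operator are $\pm j/\beta$ when $j \neq 0$ (so the Friedrichs domain selects only the solution behaving like $r^{j/\beta}$, excluding $r^{-j/\beta}$), and $0$ with a double root when $j = 0$ (so the Friedrichs domain excludes the $\log r$ solution, consistent with the indicial analysis of \S 4 applied to $L_{g_{0}} + 2$). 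The same analysis applies at $r = \pi$ by the symmetry $r \mapsto \pi - r$.

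Requiring the solution $R_{j}$ to be regular \emph{simultaneously} at both $r = 0$ and $r = \pi$ is precisely the classical condition selecting the associated Legendre functions of the first kind whose parameters satisfy $\nu - |j/\beta| \in \NN$. Setting $\nu = j/\beta + \ell$ for $\ell \in \NN$ therefore yields exactly the eigenvalues
\[
\lambda_{j,\ell} = \Bigl(\tfrac{j}{\beta} + \ell\Bigr)\Bigl(\tfrac{j}{\beta} + \ell + 1\Bigr),
\]
with radial eigenfunction $P_{\ell}^{j/\beta}(\cos r)$. The dimension count is immediate: for $j = 0$ only one angular mode survives (the zero mode), yielding a simple eigenspace spanned by $P_{\ell}^{0}(\cos r)$; for $j > 0$ both $\cos(j\theta)$ and $\sin(j\theta)$ appear, giving a two-dimensional eigenspace, as claimed.

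The only real subtlety is the Friedrichs selection at the two cone points; everything else is classical separation of variables. I would double-check that the domain condition in Definition~\ref{HF} genuinely excludes $\log r$ and $r^{-j/\beta}$ at each end by referring to the partial expansion from Proposition~\ref{partexp}, so that the restriction to solutions of the form $P_{\ell}^{j/\beta}(\cos r)$ is forced rather than assumed.
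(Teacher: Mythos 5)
Your proposal is correct and follows essentially the same route as the paper's proof: separation of variables $u = R(r)e^{ij\theta}$, the substitution $t = \cos r$ reducing the radial ODE to the associated Legendre equation, and Friedrichs-domain conditions at both poles selecting $P_\ell^{j/\beta}$ over $Q_\ell^{j/\beta}$. The only difference is presentational — you unpack the Friedrichs selection in terms of the indicial roots $\pm j/\beta$ (and the double root at $0$ when $j=0$), whereas the paper states the resulting boundary conditions $R(\pm1)<\infty$ (for $j=0$) and $R(\pm1)=0$ (for $j>0$) directly — but this is the same underlying analysis, just with an extra sentence of justification.
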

\begin{proof}
This is an explicit computation. Since $g= dr^{2}+\beta^{2}\sin^{2}r d\theta^{2}$, we seek solutions of 
$$
(\partial_{r}^2 +\frac{\cos r}{\sin r} \partial_{r}+\beta^{-2}\frac{1}{\sin^{2} r}\partial_{\theta}^{2}+\lambda) u=0.
$$
Inserting $u=R(r)e^{ij\theta}$ yields
\begin{equation}
\sin^{2}r \, R''(r)+\sin r \cos r \, R'(r) + \lambda \sin^{2} r R(r)-\frac{j^{2}}{\beta^{2}}R(r)=0,
\end{equation}
or, changing variable to $t=\cos (r)$, 
$$
(1-t^{2})\, R_{tt} - 2tR_{t}+[\lambda-\frac{j^{2}}{\beta^{2}(1-t^{2})}]R=0,\ \ t \in [-1,1].
$$
A basis of solutions when $\lambda > 0$ consists of the first and second associated Legendre functions 
$P_{\ell}^{j/\beta}(t)$ and $Q_{\ell}^{j/\beta}(t)$.  In order that the solution lies in the Friedrichs domain, one
of the following must hold: 
\begin{itemize}
\item $j=0$, and $R(-1), R(1) <\infty$, or
\item $j>0$, and $R(-1)=R(1)=0$.
\end{itemize}
In the first case, the equation becomes
$$
(1-t^{2})R_{tt} - 2tR_{t}+\lambda R=0.
$$
This has a solution which is regular at both $t = \pm 1$ only when $\lambda = \ell(\ell+1)$, $\ell \in \NN$;
the solution itself is $P_{\ell}^{0}(t)$. In particular, $\lambda = 2$ when $\ell=1$ and the eigenfunction is $u=\cos r$.

In the second case, when $j>0$, then using properties of $P_{\ell}^{k/\beta}$ and $Q_{\ell}^{k/\beta}$, we get that
$\lambda = (j/\beta + \ell)(j/\beta + \ell+1)$, $\ell \in \NN$ and the only admissible solution is $P_{\ell}^{j/\beta}(t)$; 
the eigenspace is spanned by $P_{\ell}^{\mu}(\cos r) \cos (j\theta)$, $P_{\ell}^{\mu}(\cos r) \sin (j\theta)$.
\end{proof}

\begin{lemma}
There are $2 + 2[\beta]$ eigenvalues $\lambda$  less than or equal to $2$ for the Friedrichs extension of $\Delta_{g_0}$ 
for a football with angle $2\pi\beta$.  
\end{lemma}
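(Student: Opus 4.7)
The proof is a direct enumeration based on the previous lemma, so the plan is essentially bookkeeping.

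The plan is to use the explicit eigenvalue list from Lemma~\ref{l:football}, namely that the Friedrichs spectrum of $\Delta_{g_{0}}$ on the football of angle $2\pi\beta$ is
\[
\{\mu(\mu+1) : \mu = j/\beta+\ell,\ j,\ell \in \NN\},
\]
with the eigenspace associated to $(j,\ell)$ being one-dimensional when $j=0$ and two-dimensional when $j>0$. Since the function $t\mapsto t(t+1)$ is strictly increasing on $[0,\infty)$, the condition $\mu(\mu+1)\le 2$ is equivalent to $\mu \le 1$, i.e.\ $j/\beta+\ell \le 1$.

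Next I would enumerate the pairs $(j,\ell)\in\NN\times\NN$ satisfying this inequality. Since $\ell\ge 2$ already forces $\mu\ge 2$, only $\ell\in\{0,1\}$ contribute. If $\ell=1$ then $j/\beta\le 0$ forces $j=0$, giving the single pair $(0,1)$ with eigenvalue $2$ and multiplicity one (the eigenfunction $P_{1}^{0}(\cos r)=\cos r$). If $\ell=0$ then we need $j\le\beta$; the pair $(0,0)$ gives the zero eigenvalue with multiplicity one, while $j=1,2,\ldots,[\beta]$ gives eigenvalues $(j/\beta)(j/\beta+1)\in (0,2]$, each of multiplicity two by the last clause of Lemma~\ref{l:football}. (In the degenerate case where $\beta$ is a positive integer, the extra pair $j=\beta$ is allowed and contributes eigenvalue $2$ with multiplicity two, coinciding in value but not in eigenfunction with the one produced by $(0,1)$; note however that $[\beta]=\beta$ in that case, so the formula below is unaffected.)

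Summing the contributions then yields
\[
\underbrace{1}_{(j,\ell)=(0,0)} + \underbrace{1}_{(j,\ell)=(0,1)} + \underbrace{2[\beta]}_{\ell=0,\ 1\le j\le [\beta]} = 2 + 2[\beta].
\]
There is no real obstacle here; the only point requiring a sentence of care is the overlap at $\lambda=2$ when $\beta\in\NN$, which as noted above does not affect the count because the same integer $[\beta]$ governs both the non-integer and integer cases.
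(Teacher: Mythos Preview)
Your proof is correct and follows essentially the same approach as the paper: both enumerate the pairs $(j,\ell)$ from Lemma~\ref{l:football} satisfying $(j/\beta+\ell)(j/\beta+\ell+1)\le 2$, finding the two simple eigenvalues from $j=0$, $\ell\in\{0,1\}$ and the $[\beta]$ double eigenvalues from $\ell=0$, $1\le j\le[\beta]$. Your extra sentence on the integer-$\beta$ overlap is a small refinement the paper omits, but otherwise the arguments coincide.
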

\begin{proof}
We count the numbers in~\eqref{e:footballeigen} with $\lambda \leq 2$. These occur only when $j =0$ and $\ell = 0, 1$, 
or else $j > 0$, $\ell = 0$ and $(j/\beta)(j/\beta + 1) \leq 2$, which holds if and only if $j/\beta \leq 1$. Each of these have
multiplicity $2$. This leads to $2+2[\beta]$ eigenvalues in $[0,2]$. 
\end{proof}

\begin{lemma}\label{l:eigenN}
Fix $k \geq 2$ and a bounded set $B \subset \MP_k$ of
admissible cone angles. Then for any $N \in \NN$ there exists a spherical cone metric $g$ with cone angle
parameters $\vec \beta \not\in B$ and with at least $N$ eigenvalues of $\Delta_g$ less than $2$. 
\end{lemma}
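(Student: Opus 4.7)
My plan is to leverage the explicit spectrum of the football given in Lemma~\ref{l:football} and transport it to a $k$-cone-point metric via a branched covering that preserves Laplacian eigenvalues. For $k=2$ this is immediate: take a football with cone angle $2\pi\beta$ for $\beta$ large and non-integer; by Lemma~\ref{l:football} the eigenvalues strictly less than $2$ are $\lambda_{j,0}=(j/\beta)(j/\beta+1)$ for $0 \leq j \leq \lfloor\beta\rfloor$, totaling $1+2\lfloor\beta\rfloor$ with multiplicity, which exceeds $N$ for $\beta$ sufficiently large, and the angle vector $(\beta,\beta)$ lies outside the bounded set $B$.

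For $k\geq 3$ I produce a branched cover $\pi\colon \Sigma\to\bS^{2}$ of the football $(F,g_F)$ whose branching data is arranged so that $\pi^{*}g_F$ is a spherical cone metric on $\Sigma$ with exactly $k$ cone points. For even $k = 2+2m$ a double cover branched at the two poles together with $2m$ auxiliary smooth base points realizes this, giving $\Sigma$ of genus $m$ with cone angles $(2\beta, 2\beta, 2,\ldots,2)$. For odd $k$ one instead uses covers of degree $d\geq 3$ with suitably mixed monodromy (combining three-cycle and transposition branches in $S_{d}$) to adjust parity; existence of such covers follows from Riemann's existence theorem, and Riemann-Hurwitz fixes the genus of $\Sigma$. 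In every case the resulting angle vector contains at least one component proportional to $\beta$, so it exits $B$ for $\beta$ large.

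The crucial analytic step is that $\phi \mapsto \phi\circ\pi$ sends each $\lambda$-eigenfunction in the Friedrichs domain of $\Delta_{g_F}$ injectively to a $\lambda$-eigenfunction in the Friedrichs domain of $\Delta_{\pi^{*}g_F}$. The eigenvalue equation is immediate because $\pi$ is a local isometry off the branch locus, and injectivity is immediate from the surjectivity of $\pi$. Friedrichs membership is a local check at each ramification preimage: with ramification index $e$ over a downstairs conic point of angle parameter $\beta'$, polar coordinates $(r,\theta)$ downstairs and $(\tilde r, \tilde\theta)$ upstairs (angle parameter $e\beta'$) satisfy $\tilde r = r$ and $\theta = e\tilde\theta$, so a Friedrichs expansion $a_{0}+\sum_{j\geq 1}(a_j \cos j\theta + b_j \sin j\theta)\, r^{j/\beta'}$ downstairs pulls back to $a_{0}+\sum_{j\geq 1}(a_j \cos je\tilde\theta + b_j \sin je\tilde\theta)\, \tilde r^{(je)/(e\beta')}$ upstairs, which is itself a valid Friedrichs expansion containing no $\log\tilde r$ term. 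Consequently $\Delta_{\pi^{*}g_F}$ has at least $1+2\lfloor\beta\rfloor$ eigenvalues strictly less than $2$, exceeding $N$ for $\beta$ sufficiently large. The main obstacle I foresee is the combinatorial bookkeeping of branching data realizing every $k \geq 2$, together with the local verification that pullback of Friedrichs expansions stays in the upstairs Friedrichs domain; both are standard but require some care.
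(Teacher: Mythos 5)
Your $k=2$ case and the local Friedrichs-domain verification under pullback are both correct, and the idea of transporting the football's explicit spectrum through branched covers is appealing. However, for $k\geq 3$ the approach has a genuine gap: it produces the wrong topology. The lemma asks for an angle vector in $\MP_k$ (note the hypothesis $B\subset\MP_k$, and the spectral-flow argument in \S 5.3, which this lemma feeds, concerns spherical metrics with $k$ cone points on $\bS^{2}$). Your construction for even $k=2+2m$ yields, as you yourself note, a surface of genus $m$, which for $k\geq 4$ is not a sphere. Moreover, even if one insists on genus zero, a Riemann--Hurwitz count rules out $k=3$ entirely: for a degree-$d$ cover $\pi\colon\bS^{2}\to\bS^{2}$ branched over the football, let $r_i$ be the number of preimages of pole $i$ and $s$ the number of ramification points over smooth base points. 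Every unramified preimage of a pole is still a cone point (the base angle parameter $\beta\neq 1$), so $k=r_1+r_2+s$, while Riemann--Hurwitz gives $\sum_{\mathrm{smooth}}(e_p-1)=r_1+r_2-2$; if $r_1+r_2=2$ then $s=0$ and $k=2$, and if $r_1+r_2\geq 3$ then $s\geq 1$ and $k\geq 4$. Thus no branched cover of a football by a sphere has exactly three cone points, and the ``suitably mixed monodromy'' you invoke for odd $k$ cannot deliver the case $k=3$.

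The paper instead uses reflection and gluing constructions that stay on $\bS^{2}$ throughout: for $k=3$ it doubles a spherical triangle with one large angle (odd reflections of Dirichlet eigenfunctions land in the Friedrichs domain); for $k=4$ it glues four spherical triangles; for $k\geq 5$ it slit-glues a football onto a spherical cone polygon. In each case explicit low-energy test functions are extended by zero across the seam and the min-max characterization yields $N$ eigenvalues below $2$. Your pullback argument buys exact eigenvalue equality rather than variational bounds, but it carries rigid Riemann--Hurwitz constraints that the gluing method sidesteps, and it is precisely that rigidity which blocks your argument for $k=3$ and makes the even-$k$ case land on the wrong surface.
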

\begin{proof}
The arguments for the cases $k=2, 3, 4$ and $k \geq 5$ are somewhat different.

When $k=2$ the preceding Lemma shows just this: we simply take a football with cone angle $2\pi\beta$ 
where $[\beta]>\frac{N-2}{2}$. 

Next, any spherical cone surface with $k=3$ conic points has a $\ZZ_2$ reflection symmetry, or in other words, is
obtained by doubling a spherical triangle, see~\cite{Ere2}. Thus we need only show that there exists a spherical triangle 
with at least $N$ Dirichlet eigenvalues less than $2$; the odd reflections to the doubled surface of the corresponding
eigenfunctions will be eigenfunctions in the Friedrichs domain of the cone surface.  We consider here a spherical
triangle with angles $\frac{\pi}{2}, \frac{\pi}{2}, \pi\beta$ where $\beta \gg 1$.    Using that the double of this
triangle across the side connecting the two right angles produces `half' of a football, we see that the
the Dirichlet eigenvalues of this triangle are 
\begin{equation}\label{e:k3}
\{ (j/\beta + 2\ell) (j/\beta +2\ell + 1): j, \ell \in \NN, \ j \geq 1\}.
\end{equation} 
Hence if $[\beta]>N$, then at least $N$ of these are less than $2$. 

For $k=4$, consider the shape obtained by gluing $4$ spherical triangles as in Figure~\ref{f:k4eigen2}: 
two of these are triangles with angles $(\pi/2, \pi/2, \pi\beta)$, and the other two have angles $(\pi/2, \pi/2, \pi/2)$. 
The sides that are matched have the same length (the only sides that do not have length $\pi/2$ are
those which connect the two right angles). This yields a spherical cone polygon with angles 
$(\pi(\beta+1), \pi(\beta+1), 3\pi/2, 3\pi/2)$, see Figure~\ref{f:k4eigen2}.
\begin{figure}[h]
\centering
 \includegraphics[width=0.7\textwidth]{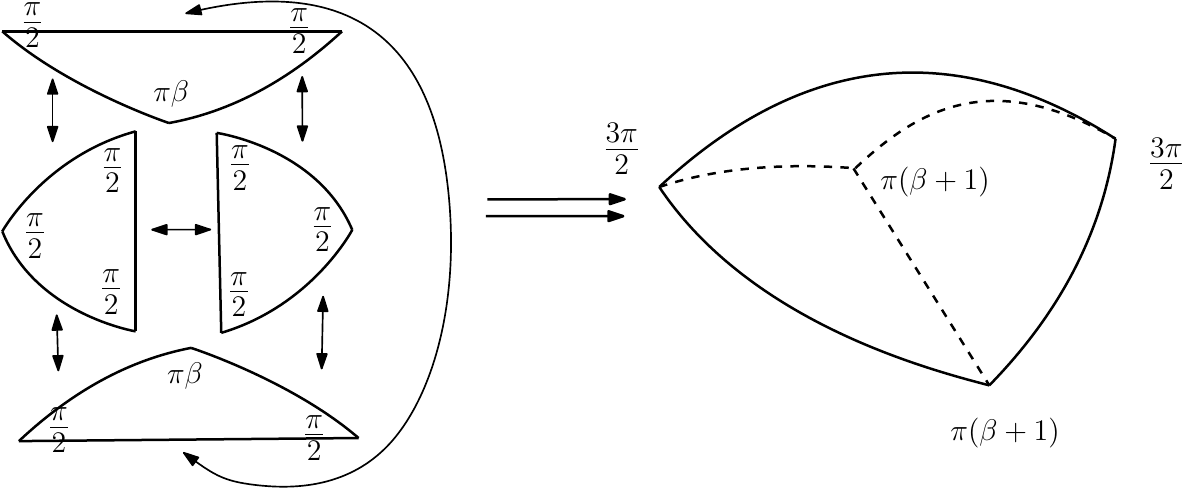}
 \caption{A spherical metric with 4 cone points $(\pi(\beta+1), \pi(\beta+1), 3\pi/2, 3\pi/2)$ with at least $N$ eigenvalues below 2}
 \label{f:k4eigen2}
\end{figure}
If $\beta$ is chosen so that the spherical triangle with one angle $\pi \beta$ has at at least $N$ Dirichlet eigenvalues less than $2$,
then this new surface has an $N$-dimensional space of functions spanned by the the functions $F_j$ which equal
the Dirichlet eigenfunctions $f_j$ on the two large triangles and $0$ on the two smaller triangles.  These functions are 
all in $H^1$ and have Dirichlet energy less than $2$.  By the minimax characterization of eigenvalues, there
must be at least $N$ eigenvalues on the spherical cone surface less than $2$. 

Finally, suppose $k\geq 5$. We use a different gluing here. Start with a football with cone angle $2\pi \beta$. By the explicit expression 
of the eigenfunctions in Lemma~\ref{l:football}, if $\beta$ is large enough, there exists $N$ eigenfunction 
$f_{1},\dots, f_{N}$ each with eigenvalue less than $2$, which vanish on a `meridian' of this football (i.e., a geodesic
curve connecting the two cone points). For example, take $f_{j}=P_{0}^{j/\beta}(\cos r)\sin(j\theta), \ j\leq [\beta]$, which
vanishes along the curve $\{\theta=0\}$. 

Now suppose that $L > 0$ is quite small and choose a slit of length $L$ in this meridian. Choose $(\beta_{1}, \dots, \beta_{k-2})$ so that the vector $\vec \beta=
(\beta, \beta, 1+\beta_{1}, 1+\beta_{2}, \beta_{3},\dots, \beta_{k-2})\in \MP_{k}$.  There exists a spherical cone metric,
obtained by gluing together two identical spherical polygons, with cone angles $2\pi(\beta_{1}, \dots, \beta_{k-2})$ at points 
$(p_1, \ldots, p_{k-2})$ arranged along the equator such that $\mathrm{dist}(p_1, p_2) = L$. 
Cut along the slit between these two points and glue this surface to the football. This new spherical cone surface
has cone angle parameters $\vec \beta=(\beta, \beta, 1+\beta_{1}, 1+\beta_{2}, \beta_{3},\dots, \beta_{k-2})$, 
see Figure~\ref{f:k5eigen2}.
\begin{figure}[h]
\centering
 \includegraphics[width=0.7\textwidth]{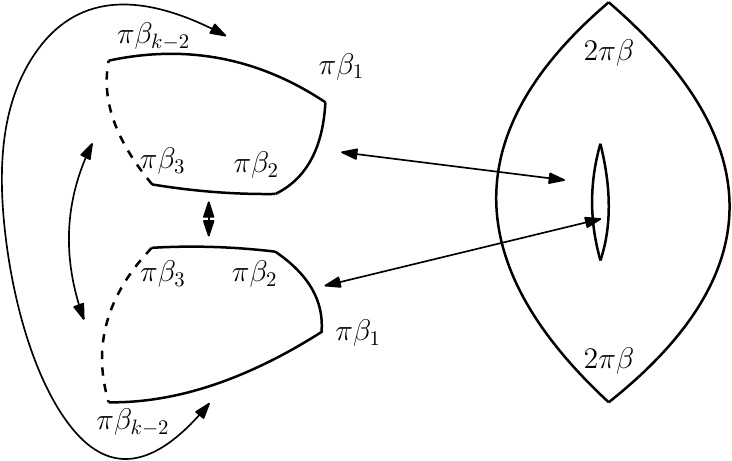}
 \caption{A spherical metric glued from three pieces such that it has $k\geq 5$ cone points $2\pi(\beta, \beta, 1+\beta_{1}, 1+\beta_{2}, \beta_{3},\dots, \beta_{k-2})$ and at least $N$ eigenvalues below 2}
 \label{f:k5eigen2}
\end{figure}

Now we proceed much as before. The eigenfunctions $f_i$ on the football which vanish on the equator extend by $0$
to $H^1$ functions $\tilde f_{i}$ on this new surface. Since
$$
\frac{\|\nabla\tilde f_{i}\|_{L^{2}}^{2}}{\|\tilde f_{i}\|^{2}_{L^{2}}}=\frac{\|\nabla f_{i}\|_{L^{2}}^{2}}{\| f_{i}\|^{2}_{L^{2}}}\leq 2
$$
for $i = 1, \ldots, N$, the min-max characterization shows that there are at least $N$ eigenvalues less than $2$ for 
this new surface.
\end{proof}

\subsection{Spectral flow}
Using the above constructions, we can get the following description of the interior of the space of spherical metrics 
with $k$ cone points on $\bS^{2}$: either it has infinitely many connected components, or else there exist infinitely
many subsets of codimension one and corresponding metrics with 2 in the spectrum. 

We can see this by the following argument. Suppose the interior of the space only has finitely many connected components.  By the results of the previous subsection, we may choose two
spherical cone metrics, $g_0$ and $g_1$, with cone angle parameters $\vec \beta^{(0)}$ and $\vec \beta^{(1)}$ 
such that $\Delta_{g_j}$ has $N_j$ eigenvalues less than $2$, and $N_0 \neq N_1$. Since there are only finitely many connected components, $g_{0}$ and $g_{1}$ can be chosen so that there is a path $g_{s}, s\in [0,1]$ connecting $g_{0}$ and $g_{1}$.
And if we denote
by $N(s)$ the number of eigenvalues in $(0,2)$, then $N(s)$ is a continuous function.  
However, this contradicts the fact that $N(0) \neq N(1)$.  Therefore there exists $s\in [0,1]$ such that $2$ is an eigenvalue of $\Delta_{g_{s}}$. Since the whole path is contained in the interior of the space, one can perturb the metrics while keeping the number of eigenvalues below 2 for $g_{0}$ and $g_{1}$, and therefore we get a codimensional-one set of such metrics with 2 in the spectrum.

The space of solutions does have many connected components \cite{MP2}, and
the preceding discussion does not rule out the possibility that it might
have infinitely many components. In that case this spectral flow would not
produce metrics with $2$ in the spectrum. 

\section{Splitting cone points -- local theory}\label{s:split}
We now take up the description of families of metrics with merging cone points, or equivalently, the construction of families 
of metrics where isolated cone points split into clusters. It suffices here to consider flat conic metrics since the change from 
flat to spherical simply adds higher order perturbations which are irrelevant for the immediate considerations. 
We first carry out a local analysis and describe a parametrization of these splitting families using 
weighted symmetric polynomials in the locations of the cone points.  The differential
of this parametrization yields a family of functions which, as we show later, unobstructs our main deformation
problem.  Unfortunately, this parametrization is singular at the front face $F_0$ of $\calE_J$, and it is necessary to 
perform an iterated blowup of the range in order to obtain a local diffeomorphism near $F_0$.  This step is 
unfortunately rather technical.    The passage from the local to the global version of these results
is straightforward.

\subsection{Weighted factorizations}
Consider the flat conic metric
$$
g_0 = |z|^{2(\beta_{0}-1)}|dz|^{2}=e^{2v_0}|dz|^{2}, \ v_0=(\beta_{0}-1)\log|z|, \quad z \in D = \{|z| < 1\},
$$
with $\beta_0 > 1$. Our aim is to parametrize the family of flat conic metrics in $D$ with conic data $(p_1, \ldots, p_J, \beta_1, 
\ldots, \beta_J)$, where all $p_j$ are near $0$ and $\sum (\beta_j-1) = \beta_0-1$, and then compute the variations of this family. 

While no local constraints prevent us from considering splittings into arbitrarily large clusters of points, we prove below that
certain global constraints dictate that we must restrict to splittings into at most $J = [\beta_0]$ points, i.e.,  the size of the initial
cone angle determines the cardinality of the cluster.  We use local versions of the spaces $\calE_J$, $\calC_J$, 
where $M$ is replaced by the (open) disk $D$, or in fact by the entire complex plane $\CC$.  Fix $\vec B =( B_1, \ldots, B_J)$ with 
each $B_i \neq 1$ such that  
\begin{equation}\label{e:splitBeta0}
\sum_{j=1}^J(B_{j}-1)=\beta_{0}-1.
\end{equation}
The equal angle case,
\begin{equation}
B_j^{\mathrm{eq}} - 1 = \frac{1}{J}(\beta_0 - 1),\  \ j = 1,\ldots, J,
\label{eqang}
\end{equation}
is of particular importance. 

We first explain how local clustering families are in bijective correspondence (away from a certain locus) with functions of the form
\begin{equation}\label{e:v0}
\begin{split}
\dot v(0) &=\sum_{j=1}^{J}( e_{j}'\cos( j\theta)+e''_{j}\sin (j\theta))\, r^{-j/\beta_{0}} \\
& =\Re \sum_{j=1}^{J}  \frac{A_j}{z^j}, \qquad A_j = \beta_{0}^{j/\beta_{0}} (e_j' + i e_j''), \qquad r=|z|^{\beta_{0}}/\beta_{0}. 
\end{split}
\end{equation}
Note that $j \leq J$ implies $-1 \leq -j/\beta_0 < 0$, so our restriction on $J$ ensures that these exponents are not less than $-1$.  

Define the constants
\[
\frakb_j =  J \frac{B_j-1}{\beta_0 - 1},
\]
and write $\vec \frakb = (\frakb_1, \ldots, \frakb_J)$. Thus $\sum \frakb_j = J$, and in the equal angle case, each $\frakb_j = 1$.
We must avoid `degenerate' $J$-tuples of cone angles lying in the set $\widehat{\Delta} = \cup_I \widehat{\Delta}_I$, where
$I = \{i_1, \ldots, i_p\} \subset \{1, \ldots, J\}$ and $\widehat{\Delta}_I = \{\vec \frakb: \frakb_{i_1} + \ldots + \frakb_{i_p} = 0\}$. (Recall that $\frakb_{i_1} + \ldots + \frakb_{i_p} = 0$ is equivalent to $\sum 2\pi (B_{i_{j}}-1)=2\pi(1-1)$, that is, this subcluster merge to a point with angle $2\pi$.)

\begin{proposition}\label{p:localsheet}
For every $\vec \frakb \not\in \widehat{\Delta}$, there is a subvariety $\calS_{\vec \frakb} \subset \CC^J$, called the
weighted discriminant locus associated to $\vec \frakb$, and a proper holomorphic mapping $\calF = \calF_{\vec \frakb}:
\CC^J \to \CC^J$ which assigns to a $J$-tuple $\vec Z = (z_1, \ldots, z_J)$ the $J$-tuple $\vec A = (A_1, \ldots, A_J)$,
as determined by \eqref{e:A} below. This map has the following properties:
\begin{itemize}
\item[i)] $\calF$ is ramified along the union of the partial diagonals in $\CC^J$, and the image of this branch locus equals
the weighted discriminant locus $\calS_{\vec \frakb}$;
\item[ii)] the restriction of this mapping to the unramified set is a $J!$-sheeted covering 
map from the interior of $\calE_J$ to $\CC^J \setminus \calS_{\vec \frakb}$;
\item[iii)] fixing any local inverse $\calF_{\vec \frakb}^{-1}: \vec A \mapsto (z_1(\vec A), \ldots, z_J(\vec A))$, then the function
\begin{equation}\label{e:vAz}
v(\vec A;z) = \sum \frakb_j \log |z-z_j(\vec A)|
\end{equation}
is differentiable at $\vec A=\vec0$ and satisfies
\[
\frac{\del v}{\del e_{\ell}'}(\vec 0)= \cos (\ell \theta)|z|^{-\ell}, \quad
\frac{\del v}{\del e_{\ell}''}(\vec 0)= \sin ( \ell \theta)|z|^{-\ell}.
\]
\end{itemize}
\end{proposition}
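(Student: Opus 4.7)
My strategy is to derive $\calF_{\vec\frakb}$ from the Laurent expansion of $\sum_j \frakb_j \log(z-z_j)$ at $z=\infty$, which reads
$$\sum_{j=1}^J \frakb_j \log(z-z_j) = J\log z - \sum_{k\geq 1}\frac{p_k(\vec Z)}{k\,z^k},\qquad p_k(\vec Z) := \sum_{i=1}^J \frakb_i z_i^k.$$
The defining formula for $\vec A$ should then express each $A_k$ as a constant multiple of $-p_k(\vec Z)/k$ for $k=1,\dots,J$, so $\calF$ is a rescaled weighted power-sum map: polynomial, holomorphic, and quasi-homogeneous of weight $k$ in the $k$-th component under the scaling $\vec Z\mapsto \lambda\vec Z$.

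The critical preliminary step is to show $\calF^{-1}(\vec 0)=\{\vec 0\}$ whenever $\vec\frakb\notin\widehat\Delta$. If $\calF(\vec Z)=\vec 0$, then the generating function $f(t):=\sum_i \frakb_i/(1-z_i t)$, whose Taylor series is $J+\sum_{k\geq 1}p_k t^k$, equals its constant term $J$ modulo $t^{J+1}$. Writing $f(t)-J$ over the common denominator $\prod_i(1-z_i t)$ forces its numerator, a polynomial of degree $\leq J$, to vanish identically. Grouping coinciding $z_i$'s into clusters $I_\alpha$ with combined weight $\mathfrak{B}_\alpha:=\sum_{i\in I_\alpha}\frakb_i$ and evaluating the numerator at $t=1/z_\gamma$ yields $\mathfrak{B}_\gamma z_\gamma=0$; the exclusion of $\widehat\Delta$ forces $\mathfrak{B}_\gamma\ne 0$, hence $z_\gamma=0$ and $\vec Z=\vec 0$. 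Quasi-homogeneity and compactness of the unit sphere upgrade this to the \L{}ojasiewicz-type bound $\|\vec Z\|\leq C\|\vec A\|^{1/J}$ near $\vec 0$ and $\|\calF(\vec Z)\|\to\infty$ as $\|\vec Z\|\to\infty$, so $\calF$ is proper. A Bezout count with weighted degrees $(1,2,\dots,J)$ gives $\deg\calF=J!$, while the Jacobian evaluates by the Vandermonde identity to $c\prod_i\frakb_i\cdot\prod_{i<j}(z_i-z_j)$. Since each $\frakb_i\ne 0$, the ramification locus equals the union of partial diagonals, its image is $\calS_{\vec\frakb}$, and the restriction of $\calF$ to the unramified set is a $J!$-sheeted topological covering onto $\CC^J\setminus\calS_{\vec\frakb}$, proving (i) and (ii).

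For (iii), the key observation is that although any local inverse $z_j(\vec A)$ is generically branched at $\vec A=\vec 0$, the function $v$ depends on the $z_i$ only through their weighted power sums, so the branched behavior cancels. Indeed, for $|z|>\max_j|z_j|$,
$$v(\vec Z;z) - J\log|z| = \Re\sum_{k\geq 1}\frac{A_k(\vec Z)}{z^k},\qquad A_k(\vec Z):=-\frac{p_k(\vec Z)}{k}.$$
For $k\leq J$, $A_k(\vec Z)$ is exactly the $k$-th coordinate of $\vec A$, while for $k>J$ the \L{}ojasiewicz bound gives $|A_k(\vec Z(\vec A))|\leq C|\vec Z|^k\leq C'|\vec A|^{k/J}$, which is $o(|\vec A|)$ since $(J+1)/J>1$. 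Thus $v(\vec A;z)$ is differentiable at $\vec A=\vec 0$ with derivative equal to the explicit linear form $\Re\sum_{k=1}^J A_k/z^k$; reading off real and imaginary parts against the convention $A_k=\beta_0^{k/\beta_0}(e_k'+ie_k'')$ produces the claimed formulas for $\partial v/\partial e_\ell'$ and $\partial v/\partial e_\ell''$.

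The main obstacle I anticipate is establishing the \L{}ojasiewicz-type estimate with the sharp exponent $1/J$. This is exactly where the full condition $\vec\frakb\notin\widehat\Delta$ matters, rather than only the singleton condition $\frakb_i\ne 0$ that controls ramification: subclusters of zero total weight would produce positive-dimensional fibers of $\calF$ over $\vec 0$, destroying properness and worsening the \L{}ojasiewicz exponent, which would in turn make the $k>J$ tail of the expansion fail to be $o(|\vec A|)$. Keeping this exponent sharp, so that the $k=J+1$ tail term is just barely subordinate to $|\vec A|$, is the technical heart of the differentiability assertion in (iii).
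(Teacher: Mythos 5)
Your argument is correct but it takes a genuinely different, and arguably cleaner, route than the paper's. The most significant deviation is the choice of $\calF$: you define $\vec A$ directly as the (rescaled) weighted power sums $A_k=-p_k(\vec Z)/k$, $p_k=\sum_i\frakb_iz_i^k$, whereas the paper's~\eqref{e:A} defines $A_j$ as the $j$-th Taylor coefficient $d_j$ of $(z-z_1)^{\frakb_1}\cdots(z-z_J)^{\frakb_J}$, and then has to relate the two via the polynomial $P(\vec A;z)$, its roots $\lambda_j$, and Newton's identities. The two maps are related by the Newton transform, a quasi-homogeneous polynomial biholomorphism of $\CC^J$ fixing $\vec 0$ with identity differential there, so they produce the same discriminant locus $\calS_{\vec\frakb}$, the same ramification structure, and the same derivative formulas at $\vec 0$; strictly speaking you are proving the proposition for a biholomorphically equivalent $\calF$, which is harmless but worth flagging explicitly. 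The upshots are real. Your $\calF^{-1}(\vec 0)=\{\vec 0\}$ argument via the rational generating function $f(t)=\sum_i\frakb_i/(1-z_it)$ and uniqueness of partial fractions replaces the paper's projective Bezout plus Vandermonde argument showing no solutions at infinity; both hinge in the same way on $\vec\frakb\notin\widehat\Delta$ ruling out zero-weight subclusters. Your \L{}ojasiewicz bound $\|\vec Z\|\leq C\|\vec A\|^{1/J}$ via quasi-homogeneity and compactness of the unit sphere is essentially the paper's contradiction-by-sequences estimate $|z_j|=\calO(\max|A_i|^{1/i})$, repackaged. The cleanest gain is in part (iii): your Laurent identity $v(\vec Z;z)-J\log|z|=\Re\sum_{k\geq 1}A_k(\vec Z)/z^k$ is an \emph{exact} identity whose first $J$ coefficients are the coordinates of $\vec A$, so the only estimate needed is on the $k>J$ tail; the paper instead computes $\partial_{A_j}\log|P|$ and must separately establish the approximation $|P(\vec A;z)|=\prod|z-z_j|^{\frakb_j}\bigl(1+\calO(|\vec A|^{1+\epsilon})\bigr)$ before transferring the derivative to $v$. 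The Vandermonde factorization of the Jacobian and the $J!$-sheeted covering conclusion are the same in both. One small caveat: the derivative formulas as literally printed in the statement are missing a factor of $\beta_0^{\ell/\beta_0}$ (compare with~\eqref{e:v0}, where the coefficient is $r^{-\ell/\beta_0}$ rather than $|z|^{-\ell}$); your computation recovers the version consistent with~\eqref{e:v0}, which is the intended one.
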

\begin{proof}
For any $\vec A \in \CC^J$, define the polynomial 
\begin{equation}
  P(\vec A; z)=z^{J}+A_{1}z^{J-1}+\dots +A_{J}.
  \label{expbb}
\end{equation}
Then 
\[
2\frac{\del\,}{\del A_{j}} \log |P(\vec A; z)| = \frac{z^{J-j}}{P(\vec A; z)}, 
\]
and in particular, at $\vec A = 0$, this derivative equals $z^{J-j - J} = |z|^{-j} e^{-ij \theta}$.

In the equal angle case (where all $\frakb_j = 1$), we define $A_j = \sigma_j(\vec Z)$ to be the $j^{\mathrm{th}}$
symmetric polynomial of the $z_i$, so $P(\vec A; z) = (z - z_1(\vec A)) \ldots (z - z_J(\vec A) )$ and
$\{z_1(\vec A), \ldots, z_J(\vec A)\}$ is some ordering of the set of roots of $P(\vec A; z)$. Then
\[
v(\vec A;  z) = \sum \log |z - z_j(\vec A)| =\log |P(\vec A)|,\ \ z \not\in \{z_1(\vec A), \ldots, z_J(\vec A)\},
\]
and the computation above establishes the result in this special case.

For more general angle splittings, assume that $|z| > \max |z_j|$ and expand the product 
\begin{equation}
(z - z_1)^{\frakb_1} \ldots (z-z_J)^{\frakb_J} = \sum_{j=0}^\infty d_j(z_1, \ldots, z_J) z^{J-j}
\label{expbb1}
\end{equation}
using the binomial theorem in each factor. We then write 
\begin{equation}\label{e:A}
A_j(z_1, \ldots, z_J) =  d_{j}(z_1, \ldots, z_J), \quad j = 1, \ldots, J,
\end{equation}
which defines the coefficients in $P(\vec A, z)$ as in \eqref{expbb}.  This defines the map $\calF(\vec Z) = \vec A$.  The remainder of the series
in \eqref{expbb1} is lower order as all the $z_j \to 0$ in the sense that
\begin{equation}
|P(\vec A; z)| = |z - z_1|^{\frakb_1} \ldots |z-z_J|^{\frakb_J}(1 + \calO( \max |z_j|^{J+1})),
\label{multform}
\end{equation}
where the error term is uniform for $(1+\epsilon) \max\{ |z_j|\} < |z| < 1$, say. As we show below (see the error term estimates near the end of the proof), it is also true that
\begin{equation}
  |P(\vec A; z)| = |z - z_1(\vec A)|^{\frakb_1} \ldots |z-z_J(\vec A)|^{\frakb_J}(1 + \calO(|\vec A|^{1 + \epsilon})),
  \label{multform2}
\end{equation}
and assuming this, then the derivative of $v$, defined as in \eqref{e:vAz}, with respect to $A_{J-\ell}$ at $\vec A = 0$
is equal to $r^{-\ell}e^{-i\ell \theta}$, as before. 

We next consider the local inverses of $\calF$.   Let $\{\lambda_1, \ldots, \lambda_J\}$ denote
the roots of the polynomial $P(\vec A, z)$, so $P(\vec A; z) = (z - \lambda_1) \ldots (z-\lambda_J)$ and
$A_\ell = (-1)^\ell\sigma_\ell(\lambda_1, \ldots, \lambda_J)$ are the standard symmetric polynomials
of these roots. 
Now take the Taylor expansion of $\log (z-\lambda)$ in $\lambda$ around $\lambda = 0$; in the range $2\max |\lambda_j| < |z| < 1$, 
the error term is uniform and we have
\[
\log(z-\lambda)=Q_{J,z}(\lambda) + \calO(|\lambda|^{J+1}), \quad Q_{J,z}(\lambda)=c_{0}(z)+c_{1}(z)\lambda+\dots + c_{J}(z)\lambda^{J};
\]
for some functions $c_j(z)$ (which we do not need to write out explicitly). Therefore, 
\begin{equation}\label{e:P}
  \begin{split}
    \log P &=\sum_{j=1}^{J} \log (z-\lambda_{j}) =\sum_{j=1}^{J}Q_{J,z}(\lambda_{j}) + \calO(\max {|\lambda_{j}|}^{J+1}) \\
    & =Jc_{0}(z)+c_{1}(z)\sum_{j=1}^J\lambda_{j}+\dots + c_{J}(z)\sum_{j=1}^J\lambda_{j}^{J}  + \calO(\max {|\lambda_{j}|}^{J+1}).
    \end{split}
  \end{equation}
By Newton's formula, the $\ell^{\mathrm{th}}$ power sum is a quasi-homogeneous polynomial $R_\ell$ of the elementary symmetric
functions $\sigma_1, \ldots, \sigma_\ell$, hence the previous formula can be rewritten as
  \[
    \log P =  J c_0(z) + c_1(z) R_1(\vec A) + \ldots + c_J(z) R_J(\vec A) + \calO(\max {|\lambda_{j}|}^{J+1}).
    \]

Now consider the (locally defined) holomorphic function 
\begin{equation}\label{e:vprime}
V(z) = \sum_{j=1}^{J} \frakb_{j}\log (z-z_{j})=\sum_{j} \frakb_{j}Q_{J,z}(z_{j}) + \calO( \max |z_j|^{J+1}). 
\end{equation}
Equating this to \eqref{e:P} and discarding the error terms gives
\begin{equation}\label{e:bezout}
  \sum \frakb_{j}z_{j}^{\ell}=\sum \lambda_{j}^{\ell} = R_\ell(\vec A), \quad \ell = 1, \ldots, J.
\end{equation}
We use this set of equations to determine the $z_j$ from $\vec A$.  Multiply the right side of
the $\ell^{\mathrm{th}}$ equation by $z_0^\ell$ to interpret these as homogeneous polynomials
in the variables $z_0, \ldots, z_J$.   This modified set of equations corresponds to a collection of 
projective hypersurface $\Sigma_\ell \subset \CC \mathbb P^J$, $\ell =1, \ldots, J$, with $\mathrm{deg}\,(\Sigma_\ell) = \ell$.
By Bezout's theorem, the intersection of the $\Sigma_\ell$ contains $J!$
points, counted with multiplicity.  When all the $\frakb_j = 1$, these $J!$ points of intersection are
just the orbit of a single solution under the symmetric group.   As we show momentarily, away from the
partial diagonals there are $J!$ distinct solutions to these equations, and for each of these, $z_0 \neq 0$.
After that, we analyze the error terms.

We first show that $z_0 \neq 0$ for each solution, i.e., all solutions lie in $\CC^J$ rather than in the divisor at infinity.
For this, rewrite $\sum \frakb_j z_j^\ell = 0$ as
\[
  \begin{bmatrix}
    \frakb_1 z_1 + \ldots + \frakb_J z_J\\ \frakb_1 z_1^2 + \ldots + \frakb_J z_J^2 \\ \vdots  \\
    \frakb_1 z_1^{J} + \ldots + \frakb_J z_J^{J}  \end{bmatrix} =
\begin{bmatrix}  z_1 & \ldots & z_J \\ z_1^2 & \ldots & z_J^2 \\
  \vdots & \vdots & \vdots \\  z_1^{J} & \ldots & z_J^{J}\end{bmatrix}
\begin{bmatrix}  \frakb_1  \\ \frakb_2 \\ \vdots \\\frakb_J \end{bmatrix} =
\begin{bmatrix} 0 \\ 0 \\ \vdots \\ 0 \end{bmatrix}.
\]
The first factor is a van der Monde matrix, hence is nonsingular precisely when the $z_j$ are all distinct and nonzero.
On the other hand, suppose that $z_{i_1} = \ldots = z_{i_p} \neq 0$ for some $I = \{i_1, \ldots, i_p\} \subset \{1, \ldots, J\}$ and all
other $z_{j}=0$.  Then we obtain a solution to this equation provided 
$\frakb_{i_1} + \ldots + \frakb_{i_p} = 0$, which indicates why the sets $\widehat{\Delta}_I$ are excluded. To see that these sets create 
the only problem, an inductive argument shows that nonzero solutions to this system exist only if some such relationship exists 
amongst the $\frakb_i$, i.e., $\vec \frakb \in \widehat{\Delta}$.   We have now shown that if $\vec \frakb$ does not lie in this 
finite union of subspaces, then all $J!$ solutions, $\vec Z^{(i)} = (z_1^{(i)}, \ldots, z_{J}^{(i)})$, $i = 1, \ldots, J!$, are elements of $\CC^J$.

Now observe that $\calF$ is the composition of the two maps 
\[
\vec Z = (z_1, \ldots, z_J) \longmapsto ( \sum \frakb_i z_i, \ldots, \sum \frakb_i z_i^J) = (\sum \lambda_i, \ldots, \sum \lambda_i^J)
\]
and the global polynomial biholomorphism
\[
(\sum \lambda_i, \ldots, \sum \lambda_i^J) \longmapsto (\sigma_1(\vec \lambda), \ldots, \sigma_J(\vec \lambda)) = \vec A.
\]
In particular, $\calF$ is an algebraic mapping from $\CC^J$ to $\CC^J$, which is generically a $J!$-sheeted cover. 

\medskip

\noindent {\bf Claim:}  This map is a proper ramified cover of degree $J!$ with ramification locus the image of the partial
diagonals (in $\vec Z$).

\medskip

Properness is obvious. It suffices, therefore, to show that $\calF$ is a local biholomorphism at every $\vec Z$ away
from a partial diagonal.  Since the second mapping in the composition is a biholomorphism, it suffices to examine the first mapping.
Its complex Jacobian equals
\begin{gather*}
  \begin{bmatrix}  \frakb_1 & \ldots & \frakb_J \\
    2\frakb_1 z_1 & \ldots & 2\frakb_J z_J \\
    \vdots & \vdots & \vdots \\
    J \frakb_1 z_1^{J-1} & \ldots & J \frakb_J z_J^{J-1}
  \end{bmatrix}  \qquad \qquad \qquad \qquad \qquad \qquad \qquad \qquad \qquad \qquad \qquad \qquad \qquad  \\ 
=  \begin{bmatrix} 1 & 0 & \ldots & 0 \\
    0 & 2 & \ldots & 0 \\
    \vdots & \vdots & \vdots & \vdots \\
    0 & \ldots & 0 & J \end{bmatrix}
  \begin{bmatrix}  1 & \ldots & 1 \\
     z_1 & \ldots & z_J \\
    \vdots & \vdots & \vdots \\
    z_1^{J-1} & \ldots & z_J^{J-1}
  \end{bmatrix}
  \begin{bmatrix}  \frakb_1 & 0 & \ldots & 0 \\
   0 & \frakb_2 & \ldots & 0 \\
    \vdots & \vdots & \vdots & \vdots \\
    0 & \ldots & 0 & \frakb_J \end{bmatrix};
\end{gather*}
this is nonsingular provided the $z_j$ are distinct since no $\frakb_j = 0$, and the middle term on the right is once again
a van der Monde matrix. The inverse function theorem now establishes the claim.
The weighted discriminant locus $\calS_{\vec \frakb}$ is, by definition, the image under $\calF$ of the union
of partial diagonals.

We now analyze the error terms.   Our goal is to show that  $|z_{j}|, |\lambda_{j}| =\calO(\max |A_{i}|^{1/i})$ for
all $j$.  Granting this, then comparing \eqref{e:P} and~\eqref{e:vprime}, we obtain the desired estimate
\[
  \log |P|-\sum \frakb_j\log|z-z_j| = \calO(\max\{|\lambda_j|^{J+1}, |z_j|^{J+1}\} )
  = \calO( |\vec A|^{1+\epsilon}).
\]
 
To prove this claim, set $M=\max\{|A_{i}|^{1/i}\}$ and recall that $\sum \lambda_i^\ell$ is a quasihomogeneous polynomial of degree
$\ell$ in $A_1, \ldots, A_\ell$, so 
\[
\sum |\lambda_{j}|^{\ell}\leq C_{0} M^\ell, \ \ell = 1, \ldots, J. 
\]

\medskip

\noindent {\bf Claim:} There exists a constant $C > 0$, depending on $\vec \frakb$, such that if $\vec A \in \CC^{J}$ has
$M=\max\{|A_{i}|^{1/i}\}\leq 1$, then any solution $\vec Z$ to $\calF(\vec Z) = \vec A$ satisfies  $|\vec Z| \leq CM$.

\medskip

If no such constant $C$ exists, then there exists a sequence $\vec A^{(n)}$ and corresponding solutions
$\vec Z^{(n)}$ such that $\Lambda_n := |\vec Z^{(n)}| \geq n M^{(n)}$, $n = 1, 2, 3, \ldots$.   Dividing each of the original equations
by the appropriate powers of $\Lambda_n$ yields 
\[
\sum \frakb_j (\tilde z_{j}^{(n)})^{\ell}=\sum \lambda_{j}^{(n)} \Lambda_n^{-\ell}, \quad \ell =1, \dots, J,
\]
where $\tilde z_{j}^{(n)}=z_{j}^{(n)}/\Lambda_n$. By construction, $|\tilde z_{j}^{(n)}|\leq 1$ for $j=1, \dots, J$ 
with $|\tilde z_{j}^{(n)}|=1$ for at least one $j$, for every $n$. 

Since the sequence $\vec Z^{(n)}/\Lambda_n$ is bounded and has norm bounded away from zero, some subsequence converges
to a limiting $J$-tuple $\vec Z \neq \vec 0$ satisfying
\[
\sum \frakb_j z_j^\ell = 0, \quad \ell = 1, \ldots, J.
\]
However, $\vec \frakb \not\in \widehat{\Delta}$, so these equations have no nontrivial solutions. This contradiction proves the estimate.

Notice that if all $\frakb_j = 1$, we recover that for the exact roots, 
$$
|\lambda_{j}|\leq C M, \ \ j=1, \dots, J.
$$
\end{proof}

\subsection{Desingularization of $\calF^{-1}$}
The map $\calF^{-1}$ is a local diffeomorphism from $\CC^J \setminus \calS_{\frakb}$ onto the interior of $\calE_J$, at least
locally in $0 < |\vec A| < \epsilon$.  It will be particularly useful to study one-parameter paths $t \mapsto \calF^{-1}(t \vec A)$,
at least for certain $\vec A$, or more globally, to consider (any branch of) $\calF^{-1}$ as a map from the blowup 
$[\CC^J ; \{\vec A = 0\}] := \calA_J \longrightarrow \calE_J$. 

Observe that the front face $F(\calA_J)$ is a sphere $\bS^{2J-1}$, and the intersection $\calS_{\frakb} \cap F(\calA_J) = \calT_0$ 
has real codimension two in this sphere. In the following we will identify an additional finite number of real codimension
two subsets $\calT_1, \ldots, \calT_N$ of this front face, and corresponding conic extensions $\calS_j = C(\calT_j)$ (so
$\calS_0 = \calS_{\frakb}$). Write $\calT = \calT_0 \cup \ldots \cup \calT_N$ and $\calS = C(\calT)$.  We shall study the restriction of
$\calF^{-1}$ to the set $\Omega  = \calA_J \setminus \calS$, and in particular the behavior of this map near 
$\del \Omega = \Omega \cap F(\calA_J)$. 

Fix a branch of $\calF^{-1}$ and $\vec A \in \Omega \setminus \del \Omega$, and consider the curve $\calF^{-1}( t\vec A) = \vec Z(t) = 
(z_1(t), \ldots, z_J(t))$.  As $t \to 0$, $t\vec A$ converges to the point $\vec A/|\vec A| \in  F(\calA_J)$ and $\vec Z(t)$ 
converges to some point in $F_0$.    We define $\calS_1 = \{A_J =0\}$ (and $\calT_1 = \calS_1 \cap F(\calA_J)$).
Thus if $\vec A \in \Omega$ then $A_J \neq 0$ and by the algebraic nature of $\calF$, at least one component $z_j(t)$ of 
$\vec Z(t)$ satisfies $z_j(t) \sim t^{1/J} \zeta_j + \calO(t^{2/J})$. By contrast, if $A_J = 0$, then the leading term of each of 
these components is of order $t^{1/(J-1)}$ or lower. The coefficients $\zeta_j$ are determined as follows. For each 
$\ell$, $R_\ell(t\vec A)$ is a polynomial in $t$ with no constant term; furthermore, the quasihomogeneity of $R_\ell$ implies that 
the only term with a linear power of $t$ is $tA_J$, and this occurs only in $R_J$. All other powers of $t$ in any $R_\ell(t\vec A)$ 
have exponent at least $2$. Inserting these putative expansions for $z_j$ into the algebraic system \eqref{e:bezout} and equating
the coefficients of $t$ yields the sequence of equations 
\begin{equation}
\sum_{j} \frakb_j \zeta_j^\ell = 0,\ \ell < J, \quad \sum_{j} \frakb_j \zeta_j^J = A_J.
 \label{bezout2}
\end{equation}

Clearly, the solution $\vec \zeta = (\zeta_1, \ldots, \zeta_J)^T$ depends {\it only} on $A_J$, but none of the other 
$A_\ell$, $\ell < J$.  In addition, its dependence on $A_J$ is homogeneous, i.e.,  $\vec \zeta (A_J) = \vec \zeta(1) A_J^{1/J}$.   
Hence the image of every point in the face $\del \Omega$ lies on a particular circle determined solely by $\vec \frakb$ 
and which we denote by $\sigma_{\frakb}$. We shall also see momentarily that $\sigma_{\frakb}$ lies entirely in a single
spherical fiber of $F_0$.  

There are complete expansions for each $z_j(t)$, hence (as also shown by general algebraic principles), each branch of $\calF^{-1}$ 
extends to a polyhomogeneous function on $\Omega$. However, this is not a local (polyhomogeneous) diffeomorphism near 
boundary points since it is far from surjective.  Our deformation theory will ultimately require that we somehow
extend $\calF^{-1}$ to a map with invertible differential even at $F(\calA_J)$, and we now explain
how this may be achieved by replacing $\calE_J$ by some iterated blowup along $\sigma_{\frakb}$.  The goal of
these blowups is to `separate out' the different paths $\vec Z(t)$ corresponding to different values of $\vec A$.  

\subsubsection{Directions of increasing order of vanishing}
This construction will be somewhat lengthy and occupy the remainder of this subsection. The idea is that each function 
$z_j(t)$ has an expansion where, after some preliminary analysis, we can see that the coefficient of $t^{i/J}$ for any $2 \leq i \leq J$
involves only $A_{J-i+1}, A_{J-i+2}, \ldots, A_{J-1}$.  Further study of these coefficients shows that there exists a linearly independent
set of directions in the bundle of vectors normal to $\sigma_b$ in $F_0$ which represent the directions tangent to those
paths which decay like $t^{i/J}$, $i = 2, \ldots, J$.  The iterated blowup is defined in terms of this independent set of directions. 

The first step is to examine more closely how the system 
$$
\sum \frakb_{j}z_{j}^{\ell}= R_\ell(\vec A), \quad \ell = 1, \ldots, J
$$
determines the asymptotics of the $z_{i}$.  Since $A_J \neq 0$ in $\Omega$, we can normalize by setting $\rho = |A_J|^{1/J}$,
and also write $\tilde A_{\ell}=A_{\ell}/|A_{J}|$, or equivalently $A_\ell = \rho^J \tilde{A}_\ell$, $\ell = 1, \ldots, J$. 
We also write $\tilde{A}_J = e^{i\theta}$; this angle $\theta$ will appear often below.  The entire collection of these
normalized components will be denoted $\widetilde{A} = (\tilde{A}_J = e^{i\theta}, \tilde{A}_{J-1}, \ldots, \tilde{A}_1)$. 
Finally, decompose $R_{\ell}(\vec A)=\ell A_{\ell}+e_{\ell}(A_{1}, \dots, A_{\ell-1})$; each monomial in $e_\ell$ is a constant
multiple of a product $A_1^{i_1} \ldots A_{\ell-1}^{i_{\ell-1}}$ where $i_1 + 2 i_2 + \ldots + (\ell-1)i_{\ell-1} = \ell$ and hence
has degree at least $2$. This implies that $R_{\ell}(\vec A) = \ell \tilde A_{\ell}\rho^{J} + \calO(\rho^{2J})$.  

Now substitute
$$
z_{i}=\sum_{j=1}^{J} c_{ij}\rho^{j} + \calO(\rho^{J+1})
$$
into the $\ell^{\mathrm{th}}$ equation of this system and collect terms with like powers to get
\begin{equation}\label{e:Rell}
\begin{split}
\sum \frakb_{j}z_{j}^{\ell}=P_{\ell, 0}\rho^{\ell}+\dots+P_{\ell, J-1}\rho^{\ell+J-1}+\calO(\rho^{\ell+J}), 
\end{split}
\end{equation}
where
\begin{equation}\label{e:Pellell}
\begin{split}
P_{\ell, 0 }&=\sum_{i=1}^{J} \frakb_{i} c_{i1}^{\ell}, \quad P_{\ell, 1}=\ell \sum_{i=1}^{J} \frakb_{i} c_{i1}^{\ell-1}c_{i2}, \quad  \mbox{and in general} \\
P_{\ell, k}&=\ell \sum_{i=1}^{J} \frakb_{i} c_{i1}^{\ell-1}c_{i, k+1}+ \sum_{i} \frakb_{i}Q_{\ell,k}( c_{i1}, \dots, c_{ik}),
\ \ 2 \leq k \leq J-1. 
\end{split}
\end{equation}
Each $Q_{\ell,k}$ is a sum of monomials $c_{i1}^{\ell_{1}}\dots c_{ik}^{\ell_{k}}$ with $\sum j\ell_{j}=\ell+k$, $\sum \ell_{j}=\ell$ and
$\ell_{1}<\ell-1$; for example, $Q_{\ell, 2}=\ell(\ell-1) c_{i1}^{\ell-2}c_{i2}^{2}$.  

Equating the coefficients of $\rho^{\ell+k}$ on the left and right side yields, for $\ell = 1, \ldots, J$ and $k = 0, \ldots, J-1$, that
\[
P_{\ell, k}  = \begin{cases} 0, \quad & \mbox{if}\  k \neq J \\  (J-k) \tilde{A}_{J-k},\ & \mbox{if}\ k = J \end{cases}
\]


When $k=0$, this is an algebraic system for the components of the vector $\vec c_1 = (c_{11}, \ldots, c_{J1})^T$:
\[
\sum_{i=1}^{J} \frakb_{i} c_{i1}^{\ell}=0, \ \ell \leq  J-1, \ \ \sum_{i=1}^{J} \frakb_{i} c_{i1}^{J}= J \tilde{A}_J = J e^{i\theta},
\]
and the solution is just a scalar multiple of the solution $\vec \zeta$ to \eqref{bezout2}. As in that case, there exist $J!$ 
solutions to  this system and when $\frakb_{i}=1$ for all $i$, these correspond to permuting
the components of $(J^{1/J}e^{i(2\pi k + \theta)/J})$, $k = 1, \ldots, J$. 

On the other hand, when $k=1$ the equation is now a linear system for $\vec c_{2}=(c_{12}, c_{22}, \dots, c_{J2})^{T}$:
\[
  \sum_{i=1}^{J} \frakb_{i} c_{i1}^{\ell-1}c_{i2}=0, \ \ell \neq  J-1, \ \ \sum_{i=1}^{J} \frakb_{i} c_{i1}^{J-2}c_{i2}=\tilde A_{J-1},
\]
which we write as $T \vec c_{2}=\vec x_{2}$, where $\vec x_{2}=(0,\dots, 0, \tilde A_{J-1}, 0)^{T}$ and 
\begin{multline*}
T=\left[ 
\begin{array}{cccc}
\frakb_{1} & \frakb_{2}&\dots & \frakb_{J}\\
\frakb_{1} c_{11} & \frakb_{2} c_{21} &\dots & \frakb_{J}c_{J1}\\
\dots\\
\frakb_{1} c_{11}^{J-1} & \frakb_{2} c_{21}^{J-1}&\dots & \frakb_{J}c_{J1}^{J-1}
\end{array}
\right]\\
=
\begin{bmatrix}
1 & 1&\dots &1\\
 c_{11} &  c_{21} &\dots & c_{J1}\\
\dots\\
 c_{11}^{J-1} &  c_{21}^{J-1}&\dots & c_{J1}^{J-1}
\end{bmatrix}
\begin{bmatrix}  \frakb_1 & 0 & \ldots & 0 \\
   0 & \frakb_2 & \ldots & 0 \\
    \vdots & \vdots & \vdots & \vdots \\
    0 & \ldots & 0 & \frakb_J \end{bmatrix}.
\end{multline*}
Since the $\frakb_{i}$ are all nonzero, this matrix is invertible unless $c_{i1}=c_{j1}$ for some $i\neq j$. Therefore, except for a
(real) codimension $2$ subset of values of $\vec A$ which we denote as $\calS_{2}$ to accord with previous notation, $T$ is invertible and hence there is a unique solution vector $\vec c_{2}$,
whose components are multiples of $\tilde A_{J-1}$.

Similarly, for larger values of $k$, we obtain an inhomogeneous linear system for $\vec c_{k}=(c_{1k}, \dots, c_{Jk})^{T}$ which is now slightly more
complicated because of the appearance of the lower order terms $Q_{\ell, i}$: 
\[
\begin{split}
\sum_{i=1}^{J} \frakb_{i} c_{i1}^{\ell-1}c_{ik}&=-\sum\frakb_{i}Q_{\ell, k-1}(c_{i1}, \dots, c_{i, k-1}), \ 
\ell\neq J-k+1, \ \mbox{and} \\ 
\sum_{i=1}^{J} \frakb_{i} c_{i1}^{J-k}c_{ik}&=\tilde A_{J-k+1}-\sum\frakb_{i}Q_{J-k+1, k-1}(c_{i1}, \dots, c_{i,k-1}).
\end{split}
\]
This can be written more simply as $T\vec c_{k}=\vec y_{k}$, where  $\vec y_k = \vec x_k - \vec q_k$, where
\[
\vec x_k = (0, \ldots, \tilde{A}_{J-k+1}, 0, \ldots, 0)^T,\ \  \vec q_k = (q_{k\ell}),\ q_{k\ell} = \sum_i \frakb_i Q_{\ell, k-1}(c_{i1}, \ldots, c_{i, k-1}).
\]
By the invertibility of the same matrix $T$, there exists a unique solution, at least for $\vec A$ outside of a real codimension two subvariety which is denoted as $\calS_{k+1}$. 

Now write the entire system as $TC = Y$, where
$$
C=(\vec c_{1}, \dots, \vec c_{J}), \ Y = (\vec y_1, \vec y_{2}, \dots, \vec y_{J}).
$$
The entries of $T$ depend on the $c_{i1}$, hence the first column of $TC$ is actually a nonlinear equation in these variables;
however it is convenient to think of the entries of these two matrices as uncoupled. 
\begin{lemma}
The matrix $C$ has rank $J$ when $\vec A$ lies outside of a real codimension $2$ subvariety of $\CC^J$. 
\end{lemma}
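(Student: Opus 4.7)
The strategy is to exploit the matrix identity $TC = Y$. The matrix $T$ factors as a Vandermonde-like matrix in $c_{11},\ldots,c_{J1}$ times $\mathrm{diag}(\frakb_1,\ldots,\frakb_J)$, so $\det T$ vanishes precisely when two of the $c_{i1}$ coincide, that is, on the codimension-two locus already identified and denoted $\calS_2$. Off of $\calS_2$ one has $C = T^{-1}Y$, and the problem reduces to showing that $\det Y$ does not vanish identically.

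I would decompose each column as $\vec y_k = \vec x_k - \vec q_k$, where $\vec x_k$ consists of a single nonzero entry in row $J-k+1$ --- namely $J\tilde A_J$ when $k=1$ and $\tilde A_{J-k+1}$ when $k\geq 2$ --- and $\vec q_k$ collects the correction terms coming from $\sum_i \frakb_i Q_{\ell,k-1}$; in particular, $\vec q_1 = \vec q_2 = 0$. The leading matrix $X = (\vec x_1,\ldots,\vec x_J)$ is antidiagonal up to permutation, with determinant
\[
\det X \;=\; \pm\, J\,\tilde A_1\tilde A_2\cdots\tilde A_J .
\]
To verify that the corrections cannot wipe this out, specialize to $\widetilde{A}_0 = (\epsilon t_1,\ldots,\epsilon t_{J-1},e^{i\theta})$ with $t_\ell \neq 0$ generic and $\epsilon > 0$ small. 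The constraint $\ell_1 < \ell - 1$ built into $Q_{\ell,k-1}$ forces every monomial there to draw at least two factors from $c_{i,2},\ldots,c_{i,k-1}$. Using the recursion $T\vec c_k = \vec y_k$, induction on $k$ then gives $\vec c_k = O(\epsilon)$ for all $k\geq 2$ while $\vec c_1$ is $\epsilon$-independent, hence $\vec q_k = O(\epsilon^2)$ for $k\geq 3$. Expanding the determinant multilinearly, any term replacing some column $\vec x_k$ by $-\vec q_k$ contributes at most $O(\epsilon^J)$, so
\[
\det Y \;=\; \pm\, J\,e^{i\theta}\,\epsilon^{J-1}\prod_{\ell=1}^{J-1} t_\ell \;+\; O(\epsilon^J),
\]
which is nonzero for all sufficiently small $\epsilon > 0$.

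To extract a codimension-two statement on $\vec A$-space, pass to the $J!$-sheeted branched cover on which a chosen branch of $\calF^{-1}$ is single-valued, so that each entry of $Y$ becomes algebraic in the lifted coordinates. The zero locus of $\det Y$ is then a proper algebraic subvariety of complex codimension one, and its image in $\vec A$-space has real codimension two. The full exceptional set for the lemma is the union of this locus (over the finitely many branches) with $\calS_2$ and with $\{A_J = 0\}$. The main delicate point I anticipate is keeping the inductive estimate $\vec c_k = O(\epsilon)$ uniform in the presence of the fractional dependence $c_{i1}\sim \tilde A_J^{1/J}$; passing to the branched cover handles the multivaluedness cleanly, but the combinatorial constraint $\ell_1 < \ell - 1$ must be used attentively to ensure that no monomial in $\vec q_k$ ever degrades the $O(\epsilon^2)$ bound to $O(\epsilon)$, since such a degradation would spoil the gap between leading and correction terms in the final determinant expansion.
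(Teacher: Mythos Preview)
Your argument is correct and takes a genuinely different route from the paper. Both start from $TC=Y$ with $T$ invertible off the Vandermonde locus $\calS_2$, and both reduce to showing $Y$ is generically invertible; the divergence is in how that last step is carried out. The paper exploits the triangular dependence structure directly: since $\vec q_k$ depends only on $\tilde A_{J-k+2},\ldots,\tilde A_J$, one can perform column operations using the antidiagonal entries to clear everything below the antidiagonal, each step introducing denominators in only the already-processed variables. This is Gaussian elimination adapted to the flag structure, and it incidentally exhibits the surviving antidiagonal entries as $\tilde A_{J-\ell}$ plus rational functions of earlier $\tilde A$'s. Your specialization argument is more elementary: the induction $\vec c_k=O(\epsilon)$ is sound because $T$ depends only on the $\epsilon$-independent $\vec c_1$, and the constraint $\ell_1<\ell-1$ in $Q_{\ell,k-1}$ does force at least two factors from $\{c_{i2},\ldots,c_{i,k-1}\}$, giving $\vec q_k=O(\epsilon^2)$; Hadamard then bounds every correction term by $O(\epsilon^J)$ against the leading $O(\epsilon^{J-1})$. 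The branched-cover passage is correct but slightly roundabout---you could simply note that all entries of $Y$ are polynomial in $(\vec c_1,\tilde A_1,\ldots,\tilde A_{J-1})$, so $\det Y$ is polynomial on the cover and its zero locus has complex codimension one. The paper's approach yields finer structural information about the antidiagonal, though the lemma is in fact used downstream only as a black box; your approach is cleaner in that it avoids tracking rational denominators through the elimination.
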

\begin{proof}
We have shown that $T$ is invertible for any $\vec A$ outside a real codimension $2$ subvariety.  Thus, restricting to such values
of $\vec A$, it suffices to prove that $X$ is also invertible, possibly restricting the set of allowable $\vec A$ further.  Now
$Y = X - Q$ where $X$ has entries $\tilde{A}_i$ on the anti-diagonal (i.e., $X_{1J} = \tilde{A}_1$, $X_{2, J-1} = \tilde{A}_2$, etc.) and
zeroes elsewhere, and $Q$ has columns $\vec q_1, \ldots, \vec q_J$. Recall that $\vec q_1 = \vec q_2 = 0$, and 
$\vec q_k$ depends only on $\tilde{A}_{J-k+1}, \ldots, \tilde{A}_J$. 

We now use column operations to reduce to a matrix with all entries below the main antidiagonal equal to $0$.
These operations involve multiplication by rational functions of the $\tilde{A}_i$, and we need to keep some
track of the dependence.

The only two nonzero entries in the first two columns are $J e^{i\theta}$ and $\tilde{A}_{J-1}$, and appropriate 
multiples of the inverses of these entries can be used to clear all the entries in the bottom two rows.
Next, use the inverse of the antidiagonal entry $\tilde{A}_{J-2} - q_{3, J-2}$ to clear all entries to its
right on row $J-2$.  Note that this introduces rational functions with denominators depending only
on $\tilde{A}_J, \tilde{A}_{J-1}$ and $\tilde{A}_{J-2}$.   Carrying on, we use the antidiagonal entry
$\tilde{A}_{J-\ell} - q_{\ell+1, J-\ell}$ to clear the entries to its right; this uses rational functions with
denominators depending only on $\tilde{A}_J, \ldots, \tilde{A}_{J-\ell}$.

Provided we restrict to the complement of the zero sets of the denominators which appear along the way,
i.e. to the union of a finite number of real codimension two varieties $\calS_{J+1}$, we obtain a matrix with all entries
below the main antidiagonal equal to $0$.   The entries along the main anti-diagonal are each of 
the form $\tilde{A}_{J-\ell}$ plus a rational function depending only on $\tilde{A}_{J-\ell_1}, \ldots, \tilde{A}_J$.
Restricting one final time to the complement of where these entries vanish denoted as $\calS_{J+2}$, we see that $Y$ is invertible,
as claimed.
\end{proof} 

By induction, each component of $\vec c_k$, $k \geq 2$, is a constant multiple of $\tilde A_{J-k+1}$ plus a polynomial depending
only on $\tilde A_{J-1}, \dots, \tilde A_{J-k+2}$, i.e.,
\[
c_{ik} = d_{ik} \tilde{A}_{J-k+1} + f_{ik}(\tilde{A}_{J-k+2}, \ldots, \tilde{A}_{J-1})
\]
(with $f_{i2} = 0$ for all $i$).  Note that, by its defining equation, $\vec c_1 = \vec d_1 \xi$, where $\vec d_1$ is a constant vector and
$\xi := A_J^{1/J} = \rho e^{i\theta/J}$.

Employing complex notation to simplify calculations, the information above allows us to compute the Jacobian of the change of variables
\[
\tilde{A} := (\xi , \tilde{A}_{J-1}, \ldots, \tilde{A}_1) \longmapsto  (z_1, \ldots, z_J).
\]
The structure of the $f_{ij}$ now shows that 
\[
D_{\tilde{A}} \vec Z = 
\begin{bmatrix}
  d_{11}+\calO(\rho) &   d_{12} \rho^2 + \calO(\rho^3) & \cdots & d_{1J}\rho^{J} +\calO(\rho^{J+1})   \\
  d_{21}+\calO(\rho)& d_{22} \rho^2 + \calO(\rho^3) & \cdots & d_{2J} \rho^J  + \calO(\rho^{J+1}) \\

  \cdots& \cdots & \cdots & \cdots \\
  d_{J1}+\calO(\rho)&  d_{J2}\rho^{2} +\calO(\rho^{3}) & \cdots & d_{JJ}\rho^{J} +\calO(\rho^{J+1})
\end{bmatrix}.
\]
By the preceding calculations, the matrix $(d_{ij})$ is nonsingular provided $\vec A$ remains outside a real codimension two variety.

\subsubsection{The final iterated blowup} 
The computations above indicate precisely how $\calF^{-1}$ becomes singular near $F(\calA_J)$, and motivate 
how this map can be desingularized by a sequence of blowups.  

We first explain this when $J=2$. Passing from the coordinates $z_1, z_2$ to $z_0 = \tfrac12 (z_1 + z_2)$ and 
$\tilde{z}_1 = \tfrac12( z_1 - z_2)$, and writing $\sqrt{\frakb_2/\frakb_1} := \bar{\frakb}$ for simplicity, we have
\[
\begin{split}
z_1&= -\tfrac12 A_1 +\tfrac12 \bar{\frakb} \sqrt{ A_1^2 - 4A_2} =  -\tfrac12 \rho^2 \tilde{A}_1 + \tfrac12 \rho \bar{\frakb}
\sqrt{ \rho^2 \tilde{A}_1^2 - 4 e^{i\theta}}  \\ & \sim \bar{\frakb} ie^{i\theta/2} \rho - \tfrac12 \tilde{A}_1 \rho^2 + \calO(\rho^3),
\end{split}
\]
and similarly 
\[
\begin{split}
z_2&= -\tfrac12 A_1 -\tfrac12 \bar{\frakb}^{-1} \sqrt{ A_1^2 - 4A_2} \sim - \bar{\frakb}^{-1} ie^{i\theta/2} \rho - \tfrac12 \tilde{A}_1 \rho^2 + \calO(\rho^3),
\end{split}
\]
and hence 
\[
\begin{split}
z_0 & =\frac{1}{2}( \bar{\frakb}  -\bar{\frakb}^{-1}) ie^{i\theta/2} \rho  -\tfrac12 \tilde A_1 \rho^{2} +\calO(\rho^{3}) =c(\theta, \frakb)\rho -\tfrac12  \tilde{A}_1 \rho^2+\calO(\rho^{3}), \\
\tilde{z}_1 & = \tfrac12 (\bar{\frakb} + \bar{\frakb}^{-1})ie^{i\theta/2} \rho +\calO(\rho^{3})=c'(\theta, \frakb) \rho e^{i\theta/2}+\calO(\rho^{3}).
\end{split}
\]
Now set $\tilde{z}_1 = R e^{i\phi}$, so that $R, \phi, z_0$ are coordinates on $\calE_2$.  We can then use these to write the lift
$P^{(0)}: \calA_2 \to \calE_2$ of $\calF^{-1}: \CC^2 \to \CC^2$ as
\[
\begin{split}
& P^{(0)}: (\rho, \theta, \tilde{A}_1) \mapsto  (R, \phi, z_0), \\ 
R = c'(\theta, \frakb) \rho & + \calO(\rho^3),\ \phi = \theta/2 + \calO(\rho^2),\ z_0 =c(\theta, \frakb)\rho -\tfrac12 \rho^2 
\tilde{A}_1 + \calO(\rho^3).
\end{split}
\]
Clearly $P^{(0)}$ has submaximal rank at $\rho = 0$ since $P^{(0)}(0, \theta, \tilde{A}_1) = (0, \theta/2, 0)$.  To remedy this, 
we perform two blowups. 

Rename $\sigma_{\frakb} = \sigma^{(0)}$ (to accord with later conventions); this equals the image $P^{(0)}(\{\rho = 0\})$.  
The first step is to blow up $\calE_2$ along $\sigma^{(0)}$, yielding the space $\calE_2^{(1)} = [\calE_2; \sigma^{(0)}]$.  
Coordinates on this new space are obtained by replacing $z_0$ with the new coordinate $z_0^{(1)} = z_0/R$ and the lift of $P^{(0)}$ equals
\[
\begin{split}
P^{(1)}: & (\rho, \theta, \tilde{A}_1) \mapsto (R, \phi, z_0^{(1)}) \\ 
& = (c'(\theta, \frakb)\rho + \calO(\rho^2), \theta/2 + \calO(\rho^2), 
c(\theta, \frakb) - \tfrac12 \rho \tilde{A}_1 + \calO(\rho^2) ).
\end{split}
\]
This is still singular at $\rho=0$ since $c(\theta, \frakb) =(\bar{\frakb} - \bar{\frakb}^{-1}) ie^{i\theta/2}$ is independent of $\tilde A_{1}$, 
but is slightly less singular than $P^{(0)}$. The image $\sigma^{(1)} := P^{(1)}(\{\rho=0\})$ is a circle in the interior of the new front face 
$F(\calE_2^{(1)})$ of this new blowup. 

Finally, blow up once again to arrive at $\calE_J^{(2)} = [\calE_J^{(1)};\sigma^{(1)}]$.  This has the new coordinate 
$z_{0}^{(2)} =(z_{0}^{(1)}-c(\theta, \frakb))/R$, and the lift of $P^{(1)}$ is
$$
P^{(2)}: (\rho, \theta, \tilde{A}_1) \mapsto (R, \phi, z_0^{(2)}) = (c'(\theta, \frakb)\rho +\calO(\rho^{2}), \theta/2+\calO(\rho^{2}),  -\tfrac12  \tilde{A}_1 +\calO(\rho)).
$$
This now {\it is}  a local diffeomorphism, even at $\rho = 0$. 

\bigskip

Let us now return to the general case and summarize the entire process before writing the steps more carefully.
First blow up $\calE_J$ at $\sigma_{\frakb} = \sigma^{(0)}$. Then $P^{(0)}$ lifts 
to a map $P^{(1)}: \Omega \to \calE_J^{(1)}$ which is slightly less degenerate at $\rho = 0$ in the sense that the 
image $\sigma^{(1)} = P^{(1)}( \{\rho = 0\})$ is now $3$-dimensional (instead of $1$-dimensional).  We continue, blowing up
$\sigma^{(1)}$ to obtain $\calE_J^{(2)}$ and a lifted map $P^{(2)}$ which is less degenerate still. The dimension of 
$P^{(2)}(\{\rho =0\})$ increases by $2$.  The later blowups $\calE^{(j+1)} = [\calE_J^{(j)}; \sigma^{(j)}]$ and maps
$P^{(j)}$ are defined the same way.  Continuing through $J-2$ steps, the image $\sigma^{(J-2)} = P^{(J-2)} (\{\rho = 0\})$ 
is $(2J-3)$-dimensional. This dimension does not increase after the next blowup, but finally, $P^{(J)}: \Omega \to
\calE_J^{(J)}$ is nondegenerate even at $\rho = 0$.  

We prepare by choosing coordinates analogous to $(R, \phi, z_0)$ on $\calE_J$. The center of mass of $(z_1, \ldots, z_J)$ is
$z_{0}=\sum z_{i}/J$, thus if we set $\tilde z_{i}=z_{i}-z_{0}$ (so $\sum_{i=1}^{J} \tilde z_{i}=0$), then $z_0, \tilde{z}_1, \ldots, 
\tilde{z}_{J-1}$ is a full coordinate system on $\CC^J$. We next pass to projective coordinates near a point in the interior of 
$F_0$ in $\calE_J$ by writing 
\begin{multline*}
\tilde{z}_1 = R e^{i\phi},\ z_j^{(0)} = \tilde{z}_j/R, \ j = 2, \ldots, J-1, \\ 
\mbox{so}\ \ (R, \phi, z_0, z_2^{(0)}, \ldots, z_{J-1}^{(0)}) \in \RR^+ \times \bS^1 \times \CC \times \CC^{J-2}.
\end{multline*} 

The expansions for each $z_i$ in $\rho$ yield 
$$
z_0 \sim \bar{c}_1 \rho + \bar{c}_2 \rho^2 + \ldots, \ \ \tilde z_{i}=  \tilde{c}_{i1} \rho + \tilde{c}_{i2} \rho^2 + \ldots,
$$
where $\tilde{c}_{ij} = c_{ij} - \bar{c}_j$.  We recall that the coefficient of $\rho$ in each of these expansions is a function 
of $\theta$ alone, the coefficient of $\rho^2$ is a function of $\theta$ multiplied by $\tilde{A}_{J-1}$, and for $j \geq 2$, 
the coefficient of $\rho^j$ takes the form 
$a(\theta) \tilde{A}_{J-j} + b(\theta, \tilde{A}_{J-1}, \ldots, \tilde{A}_{J-j+1})$.  We refer to this as the ``standard dependence''. 
It is straightforward to check that $R$, $\phi$, and the $z_i^{(0)}$ all exhibit this same standard dependence and in particular
$R = \calO(\rho)$, $\phi = \theta/J + \calO(\rho)$, and $z_i^{(0)} = \tilde{c}_{i1}(\theta)/\tilde{c}_{11}(\theta) + \calO(\rho)$.
This parametrizes the circle $\sigma_{\frakb}$ by $\theta$ via $\theta \mapsto (0, \theta/J, 0, c_{21}(\theta)/c_{11}(\theta), 
\ldots, c_{J-1,1}(\theta)/c_{11}(\theta))$. We change notation slightly, first writing $z_0 = z_0^{(0)}$, and then 
\[
R \sim \sum_{j=1}^J \kappa_j \rho^j,\ z_0^{(0)} \sim \sum_{j=2}^{J+1} e_{0j}^{(0)} \rho^{j-1},\ \ 
z_i^{(0)} \sim \sum_{j=1}^J e_{ij}^{(0)} \rho^{j-1}, \ i = 2, \ldots, J-1.
\]
This defines the lift of $\calF^{-1}$ to a map $P^{(0)}: \Omega \longrightarrow \calE_J$. 

For the next step, where we blow up $\sigma_{\frakb} = \sigma^{(0)}$ in $\calE_J$, we find projective coordinates 
on $\calE_J^{(1)} = [\calE_J; \sigma^{(0)}]$ by recentering each $z_i^{(0)}$ and then dividing by $R$, to get
\[
R, \ \phi, \ z_0^{(1)} = z_0^{(0)}/R,\ \tilde{z}_i^{(1)} = (z_i^{(0)} - e_{i1}^{(0)})/R,\ i = 2, \ldots, J-1.
\]
As will be the case at each step below, the expansions for each of these functions exhibits standard dependence, with 
\[
R = \calO(\rho),\ \phi = \theta/J + \calO(\rho),\ \ z_i^{(1)} = e_{i2}^{(0)}/\kappa_1 + \calO(\rho),\ i = 0, 2, \ldots, J-1, 
\]
or, changing notation again and writing out more of the expansion, 
\[
z_i^{(1)} \sim \sum_{j=2}^{J} e_{ij}^{(1)} \rho^{j-2},\ i = 0, 2, \ldots, J-1.
\]
Write $P^{(1)}$ for the lift of $P$. At $\rho = 0$, $e_{02}^{(1)}$ depends only on $\theta$ while $e_{i2}^{(1)} = 
(e_{i2}^{(1)})'(\theta) \tilde{A}_{J-1}$, $i \geq 2$.  Hence $P^{(1)}(\{\rho=0\})$ is a $3$-dimensional submanifold $\sigma^{(1)}$; 
it is a bundle of hemispheres $\bS^2_+$ over a circle parametrized by $\phi = \theta/J$. This circle itself is given by 
$z_0^{(1)} = e_{02}^{(1)}(\theta)$, with all other $z_i^{(1)} = 0$, and $\tilde{A}_{J-1}$ is a projective coordinate on each hemisphere.  

The pattern is now relatively clear, but we write out the next step since there is one further minor twist.  Define $\calE_J^{(2)} = 
[\calE_J^{(1)}; \sigma^{(1)}]$.  Each point of $\sigma^{(1)}$ corresponds to some values of $\theta$ and $\tilde{A}_{J-1}$, which we now fix.  

Rotate the coordinates $z_i^{(1)}$, $i = 2, \ldots, J-1$, to new coordinates $\tilde{z}_i^{(1)}$, where $\tilde{z}_2^{(1)} = 
f_{22}(\phi) \tilde{A}_{J-1} + \calO(\rho)$ and $\tilde{z}_i^{(1)} = \calO(\rho)$ for $i > 2$, still with standard dependence.  Thus 
$\tilde{z}_3^{(1)}, \ldots, \tilde{z}_{J-1}^{(1)}$ are coordinates in directions complementary to $\sigma^{(1)}$ (and of course if $J=3$, 
this latter part of the coordinate system is absent).  The blowup is realized by the new coordinates $z_0^{(2)} = 
(z_0^{(1)} - e_{02}^{(1)})/R$ and $z_i^{(2)} = \tilde{z}_i^{(1)}/R$, $i = 3, \ldots, J-1$.  Define $e_{03}^{(2)}:=e_{03}^{(1)}/\kappa_{1}$.  Then 
$P^{(1)}$ lifts to a map $P^{(2)}$ into $\calE_J^{(2)}$, which satisfies 
$z_0^{(2)} = e_{03}^{(2)} + \calO(\rho)=(e_{03}^{(2)})'(\theta) \tilde A_{J-1} + \calO(\rho)$  and 
$z_i^{(2)} = (e_{i3}^{(1)})'(\theta) \tilde{A}_{J-2} + (e_{i3}^{(1)})''(\theta, \tilde{A}_{J-1}) + \calO(\rho)$, $i \geq 3$.  Here
$(e_{i3}^{(1)})''(\theta, \tilde{A}_{J-1})$ is rational in $\tilde{A}_{J-1}$, with coefficients smooth in $\theta$. 
The small difference here is that the leading coefficients in the expansions in $\rho$ of each of these new coordinates 
is affine in $\tilde{A}_{J-2}$ instead of just linear. Both $(e_{i3}^{(1)})'$ and $(e_{i3}^{(1)})''$ are determined in terms of the 
location on $\sigma^{(1)}$!    We see from this that $P^{(2)}(\{\rho = 0\})$ is a bundle of two-dimensional 
hemispheres over $\sigma^{(1)}$, now parametrized affinely rather than just linearly by $\tilde{A}_{J-2}$. New coordinates at this step are: 
\[
(R, \phi, z_{0}^{(2)}, \tilde z_{2}^{(1)}, \tilde z_{3}^{(2)}, \dots, \tilde z_{J-1}^{(2)}).
\]

In general, there is a sequence of blowups
\[
\calE_{J}^{(j)}=[\calE_{J}^{(j-1)}; \sigma^{(j-1)}], j=1,\dots, J-1, 
\]
along with the repeated lifted maps $P^{(j)}: \Omega \rightarrow \calE_J^{(j)}$, where $\sigma^{(j-1)} = P^{(j-1)}(\{\rho = 0\})$. 
This corresponds to new coordinates 
\[
(R,\phi, z_{0}^{(j)}, \tilde z_{2}^{(1)}, \tilde z_{3}^{(2)}, \dots, \tilde z_{j}^{(j-1)}, \tilde z_{j+1}^{(j)}, \dots, \tilde z_{J-1}^{(j)})
\]
as follows. At each point of $\sigma^{(j-1)}$, the values of  $\theta, \tilde A_{J-1}, \dots \tilde A_{J-j+1}$ are fixed.  
At this $j^{\mathrm{th}}$ stage we have 
\begin{multline*}
z_{0}^{(j)}=(z_{0}^{(j-1)}-e_{0j}^{(j-1)})/R \\
\sim (e_{0(j+1)}^{(j)})'(\theta)\tilde A_{J-j+1}+(e_{0(j+1)}^{(j)})''(\theta, \tilde A_{J-1}, \dots, \tilde A_{J-j+2}) +\calO(\rho).
\end{multline*}
Because $z_i^{(j-1)}$ depends affinely on $\tilde A_{J-j+1}$ for $i \geq j$, we can rotate the coordinates $\tilde z_{i}^{(j-1)}$, $i\geq j$,
so that $\tilde z_{j}^{(j-1)}=f_{jj}(\phi)\tilde A_{J-j+1}+\calO(\rho)$, and $z_{i}^{(j-1)}=
\calO(\rho)$ for $i>j$.  Thus we can define $z_{i}^{(j)}=z_{i}^{(j-1)}/R$.   These coordinates are again affine in $\tilde A_{J-j}$, which 
guarantees that one can proceed further in this iteration. 

When $j \leq J-2$, the limiting set $\sigma^{(j)}$ is a bundle with fibers $\bS^2_+$ over $\sigma^{(j-1)}$, and we continue as before.
If $j=J-1$, then $\sigma^{(j)}$ is a graph over, and hence has the same dimension as, $\sigma^{(j-1)}$.   We blow this submanifold
up and have coordinates $(z_0^{J-1}, \tilde{z}_2^{(1)}, \tilde{z}_3^{(2)}, \ldots, \tilde{z}_{J-1}^{(J-2)})$. The final step, when $j=J$,  
involves the new coordinate $z_{0}^{(J)}=(z_{0}^{(J-1)}-e_{0J}^{(J-1)})/R$.  Just as when $J=3$ earlier, the lifted map
\begin{equation}\label{e:PJ}
P^{(J)}: (\rho, \theta, \tilde A_{1}, \dots, \tilde A_{J-1})\mapsto (\phi, z_{0}^{(J)}, \tilde z_{2}^{(1)}, \dots, \tilde z_{J-1}^{(J-2)})
\end{equation}
is a local diffeomorphism at $\rho=0$;  indeed, to leading order,  $z_{0}^{(J)}\sim \tilde A_{1}, z_{j}^{(j-1)}\sim \tilde A_{J-j+1}, 2\leq j\leq J-1$. 
Therefore, the limiting set $\sigma^{(J)}$ is open in the front face of $\calE_{J}^{(J)}$, and the map $P^{(J)}$ is a local diffeomorphism
at $\rho = 0$. 

\bigskip

The goal of this entire construction has now been realized: we have (implicitly) identified a finite number of
real codimension $2$ conic subvarieties $\calS_j \subset \calA_J$ and the space $\calE_{J}^{(J)}$, and 
have demonstrated that if $\vec A \in \Omega \setminus \del \Omega$, then $\vec Z(t) = \calF^{-1}(t \vec A)$ lifts to
a polyhomogeneous map $[0,1) \to \calE_{J}^{(J)}$, and this is a `slice' of a local diffeomorphism $\Omega 
\to \calE_J^{(J)}$. 

We emphasize that this description is `very' local, and in particular, we have not tried to describe the behavior
near the possible intersection of $\sigma_{\frakb}$ with other faces of $\calE_J$.  A careful understanding of such 
behavior is likely to be complicated and should involve a more complicated set of blowups around
the successive strata of $\calS_{\vec \frakb}$. 

In any case, in terms of all of this, we can now define, locally in $\Omega$, a suitable family of conformal factors 
$v( \vec A; z)$ as a fiberwise function on $\calC_J$. Our earlier calculations produce the derivatives of $v$. 

Recall that 
\begin{definition}\label{d:psub}
For a manifold with corner $M$, a subset $X$ is called a \emph{p-submanifold} if for any $p\in X$ there is a neighborhood $p\in U\subset M$ and local coordinates $\{x_{1}, \dots, x_{n}, y_{1}, \dots, y_{m}\}$ on $U$ such that $X\cap U$ is given by $\{x_{1}=\dots=x_{n}=0\}$.
\end{definition}

We have proved the 
\begin{proposition}\label{p:localv}
Fix any point $\frakq\in \del \Omega \subset \calA_J$ and suppose that $W$ is a subspace of $\RR^{2J} \cong 
\{ \sum_{\ell=1}^J A_\ell z^{-\ell},\ A_\ell \in \CC\}$.  Then there is a p-submanifold $\calW \subset \calE_{J}^{(J)}$ 
containing $P^{(J)}(\frakq)$ such that the differential of the function $v(\vec A; z)$ restricted to $\calW$ is equal 
to the subspace $W$.  
\end{proposition}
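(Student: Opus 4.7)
The plan is to use that $P^{(J)}$ is a local diffeomorphism of manifolds with corners near $\frakq$, identifying a neighborhood of $P^{(J)}(\frakq)$ in $\calE_{J}^{(J)}$ with a neighborhood of $\frakq$ in $\Omega$ carrying smooth coordinates $(\rho,\theta,\tilde A_{1},\dots,\tilde A_{J-1})$. Under this identification, constructing a p-submanifold of $\calE_{J}^{(J)}$ reduces to doing so in these coordinates, and $v$ is already expressed directly in terms of the $\vec A$ variables.

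The first step is to read off the leading asymptotics of $v$ in these coordinates. Substituting $A_{n}=\rho^{J}\tilde A_{n}$ for $n<J$ and $A_{J}=\rho^{J}e^{i\theta}$ into
\[
v(\vec A;z)=J\log|z|-\Re\sum_{n\geq 1}\frac{R_{n}(\vec A)}{n\,z^{n}},
\]
(valid for $|z|$ bounded away from the small roots $z_j(\vec A)$, using $\sum_j\frakb_j z_j^n=R_n$ for $n\leq J$ and $\sum\frakb_j=J$), together with $R_{n}=nA_{n}+e_{n}(A_{1},\dots,A_{n-1})$ where every monomial of $e_{n}$ carries $\rho$-weight at least $2J$, yields
\[
v(\vec A;z)=J\log|z|+\rho^{J}f(\theta,\tilde A;z)+\calO(\rho^{2J}),\ \ f(\theta,\tilde A;z)=-\Re\!\Big[\sum_{n=1}^{J-1}\tilde A_{n}z^{-n}+e^{i\theta}z^{-J}\Big].
\]
Since the naive partial derivatives of $v$ in these coordinates all vanish at $\{\rho=0\}$, the correct reading of ``the differential of $v$'' at the boundary point $\frakq$ is the $\rho^{J}$-coefficient $f$ together with its tangential variations, which together encode the infinitesimal content of Proposition~\ref{p:localsheet}(iii).

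Accordingly I define the linear map $D:T_{P^{(J)}(\frakq)}\calE_{J}^{(J)}\to\{\sum_{\ell=1}^{J}A_{\ell}z^{-\ell}\}$ by $D(\partial_{\rho})=f(\theta_{0},\tilde A_{0};z)$ and $D(\partial_{y})=\partial_{y}f|_{(\theta_{0},\tilde A_{0})}$ for each tangential coordinate $y\in\{\theta,\Re\tilde A_{n},\Im\tilde A_{n}\}_{n<J}$. The $2(J-1)$ tangential derivatives $D(\partial_{\Re\tilde A_{n}})=-\cos(n\arg z)|z|^{-n}$ and $D(\partial_{\Im\tilde A_{n}})=-\sin(n\arg z)|z|^{-n}$ span every $|z|^{-n}$-component with $n<J$, and the pair $(D(\partial_{\rho}),D(\partial_{\theta}))$ projects to a basis of the two-dimensional $|z|^{-J}$-summand, the relevant $2\times 2$ determinant being $\cos^{2}\theta_{0}+\sin^{2}\theta_{0}=1$. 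Hence $D$ is a linear isomorphism, which is precisely the isomorphism of Proposition~\ref{p:localsheet}(iii) rewritten in the re-blown-up frame.

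Given $W$, put $\tilde W=D^{-1}(W)$. If $\tilde W$ contains a vector with nonzero $\rho$-component, a linear change of smooth coordinates $y_{i}\mapsto y_{i}-v_{i}\rho$ (preserving $\rho$ as boundary defining function) aligns $\tilde W$ with coordinate directions including $\partial_{\rho}$; otherwise $\tilde W$ lies in the tangent to $\{\rho=0\}$ and a rotation among the tangential coordinates suffices. In either case, taking $\calW$ to be the zero locus of the coordinates complementary to $\tilde W$ yields a p-submanifold through $P^{(J)}(\frakq)$ with $T\calW=\tilde W$, so that the differential of $v$ restricted to $\calW$ equals $D(\tilde W)=W$. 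The main obstacle is fixing the asymptotic interpretation of ``differential'' so that $D$ is genuinely an isomorphism; once that interpretation is in place the remainder is elementary linear algebra together with the standard realization of an arbitrary tangent subspace as the tangent space of a p-submanifold on a manifold with corners.
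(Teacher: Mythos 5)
Your proof is correct, and it makes explicit what the paper leaves implicit: Proposition~\ref{p:localv} is recorded there as an immediate consequence of the desingularization (that $P^{(J)}$ is a local diffeomorphism including at $\del\Omega$) together with Proposition~\ref{p:localsheet}(iii), with no separate argument written out. The step you rightly flag as the crux is the interpretive one: the literal partial derivatives of $v$ in the coordinates $(\rho,\theta,\tilde A_1,\dots,\tilde A_{J-1})$ all vanish at $\rho=0$ (for $J\geq 2$, since $v-J\log|z|=\calO(\rho^J)$), so ``the differential of $v$'' must be read as the rescaled map $D$ built from the $\rho^J$-Taylor coefficient $f$ and its tangential variations. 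This is the reading that is consistent with the paper's later use in \S 8, where along the quasihomogeneous paths $\gamma(t)=P^{(K)}(\tilde A(t))$ one has $\tfrac{d}{dt}v(\gamma(t))|_{t=0}=\Re\sum A_\ell z^{-\ell}$, i.e.\ exactly $|A_J|f$ evaluated at the limiting boundary point. Your verification that $D$ is a linear isomorphism onto $\RR^{2J}$ is correct, and so is the linear-algebra step realizing $D^{-1}(W)$ as the tangent space of a $p$-submanifold through $P^{(J)}(\frakq)$. Two minor inaccuracies that do not affect the argument: the remainder in your expansion of $v$ is $\calO(\rho^{J+\epsilon})$ (or $\calO(\rho^{J+1})$), not $\calO(\rho^{2J})$, because the tail $\sum_{n>J}\sum_j\frakb_j z_j^n/(nz^n)$ already contributes at order $\rho^{J+1}$ and $v=\log|P|+\calO(|\vec A|^{1+\epsilon})$; and the phrase identifying $D$ with ``the isomorphism of Proposition~\ref{p:localsheet}(iii) in the re-blown-up frame'' is loose, since the two maps have different domains and the blowdown $\calA_J\to\CC^J$ linking them has vanishing differential at $\del\Omega$ — what matters, and what you use, is only that both are isomorphisms onto $\RR^{2J}$.
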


\subsection{The global parametrization}\label{ss:global}
We now formulate the global version of this result. Let $g_{0}$ be a spherical conical metric with conic data 
$\frakp_{0}=(p_{1}, \dots, p_{k})$ and $\vec \beta_{0}=(\beta_{1}, \dots, \beta_{k})$.  We reindex so as to 
list the cone angle parameters in decreasing order, i.e., so that 
\begin{equation}\label{e:angle}
\beta_{1}\geq \beta_{2}\geq \dots \geq \beta_{k_{0}}>1>\beta_{k_{0}+1}\geq \dots \geq \beta_{k}. 
\end{equation}
For each $j \leq k_0$, we allow $p_j$ to split into $[\beta_j]$ points, so altogether there are 
\begin{equation}\label{e:K}
K=k+\sum_{i=1}^{k_{0}} ([\beta_{i}]-1)=\sum_{i=1}^{k}\max\{[\beta_{i}], 1\}
\end{equation}
points after splitting.  For each $j \leq k_0$ choose splitting parameters $\vec B^{(j)} = (B_1^{(j)}, \ldots, B_{[\beta_j]}^{(j)})$ with
$\sum_i (B_i^{(j)}-1) = \beta_j-1$. We also set $\vec B^{(j)}=(B_1^{(j)}) = (\beta_j)$, $j > k_0$ and decompose the 
entire set $\vec B$ into clusters associated to each $p_j$, 
\begin{equation}\label{e:B}
\vec B = ( \vec B^{(1)}, \ldots, \vec B^{(k)}) \in (0,\infty)^{K},
\end{equation}
where each cluster $\vec B^{(j)}$ is interpreted as above.   Points $p_i$ with $i>k_{0}$ or $\beta_{i}<2$ do not split.  Same as the single cluster case, we require $\vec B$ to avoid  $\widehat{\Delta} = \cup_{j=1}^{k}\cup_{I} \widehat{\Delta}_I^{(j)}$, where
$I = \{i_1, \ldots, i_p\} \subset \{1, \ldots, [\beta_{j}]\}$ and $\widehat{\Delta}_I^{(j)} =\{\vec B: \, \sum_{i\in I} 2\pi (B_i^{(j)}-1)=0\}$. (That is, no subcluster merge to a point with angle $2\pi$.) 

\begin{definition}\label{def:B}
An angle vector $\vec B\in (0, \infty)^{K}$ is called admissible if it satisfies the constraints above, and the set of all such $\vec B$ is denoted $B$. 
\end{definition}

We next define a lift of $\frakp_0$, first to the point 
\[
(p_1, \ldots, p_1, p_2, \ldots, p_2, \ldots, p_{k_0}, \ldots, p_{k_0}, p_{k_0+1}, \ldots, p_k) \in M^K,
\]
where each $p_j$ with $j \leq k_0$ is repeated $[\beta_j]$ times. Finally, we choose a lift of this point to $\hat{\frakp}
=(\frakq^{(1)}, \ldots, \frakq^{(k)}) \in \calE_K(M)$, where each $\frakq^{(j)}$ is a lift of $(p_j, \ldots, p_j)$ to the interior of the 
central front face of $\calE_{[\beta_j]}$ for $j\le k_{0}$.  We can certainly assume that each $\frakq^{(j)}$ lies in the admissible
set $\Omega = \Omega(\calE_{[\beta_j]})$. (We are abusing notation slightly here, regarding each $\calE_{[\beta_j]}$ 
as a local factor in $\calE_K$.) For $j>k_{0}$, $\frakq^{(j)}=p_{j} \in \calE_{1}(M)=M$.  Recall that the lift of $\frakp_{0}$ to $M^{K}$ lies on the intersection of partial diagonals, where the blow up is done within each local factor. That is,  $\hat \frakp$ lies in a corner $F_{0}\subset \calE_{K}$ where locally there is a product structure, and the factors are $\calE_{[\beta_{j}]}, j\le k_{0}$ and $(k-k_{0})$ copies of $M$.  

Analogous to the map $\calF^{-1}$, we lift (one branch of) the initial map $\calF^{-1}$ as follows. Write 
$\CC^{K}=\prod_{j=1}^{k_{0}}\CC^{[\beta_{j}]} \oplus  \CC^{k-k_0}$, and define
\[
\calF^{-1}: (\vec A^{(1)}, \dots, \vec A^{(k_{0})}, A^{(k_{0}+1)}, \dots, A^{(k)}) \rightarrow (\vec z^{(1)}, \dots, \vec z^{(k_{0})}, z^{(k_{0}+1)}, \dots, z^{(k)}),
\]
where 
\[
\vec z^{(j)}=\calF^{-1}(\vec A^{(j)}), \ j\leq k_{0}\ \mbox{and}\ z^{(j)}=-A^{(j)}, \ j>k_{0}. 
\]
Taking each $\vec z^{(j)}$ as the local coordinate in $M^{[\vec \beta_{j}]}$, then $\calF^{-1}$ lifts to
\[
P^{(0)}: \calA_K \rightarrow \calE_{K}.
\]

As in the local case, $P^{(0)}$ is not a diffeomorphism at $\del \Omega$, so we perform the additional sequence of blowups in 
$\calE_{K}$ near $\hat \frakp$. Indeed, we replicate the iterative blowup from the single-cluster case in each factor 
$\calE_{[\beta_{j}]}, j\leq k_{0}$. When $j>k_{0}$, we simply blow up $p_{j}\in M$ in that factor. Because of the transversality, 
these operations can be performed in any order. We call the final space $\nwBase$;  it is locally given by 
\[
\prod_{j=1}^{k_{0}} \calE_{[\beta_{j}]}^{([\beta_{j}])} \times \prod_{j=k_{0}+1}^{k} [M; \{p_{j}\}], 
\] 
and the final lift of $\calF^{-1}$ is 
\begin{equation}\label{e:PK}
P^{(K)}: \prod_{j=1}^{k_{0}}[\CC^{[ \beta_{j}]};0] \times \prod_{j=k_{0}+1}^{k}[\CC;0] \rightarrow \calE_K^{(K)}.
\end{equation}
This is a local diffeomorphism, including up to $\del \Omega$.

\begin{proposition}\label{p:globalv}
Fix any point $\frakq$ on the front face of $\calA_K(M) := \prod_{j=1}^{k_{0}}[\CC^{[\beta_{j}]};0] \times \prod_{j=k_{0}+1}^{k}[\CC;0]$,
and not lying on the codimension two subvarieties $\calT$. Suppose furthermore that $W$ is a subspace of
\begin{equation}\label{e:AK}
\RR^{2K} \cong 
\prod_{j=1}^{k_{0}}\{ \sum_{\ell=1}^{[\beta_{j}]} A_\ell^{(j)} z^{-\ell},\ A_\ell^{(j)} \in \CC\} \times \prod_{j=k_{0}+1}^{k}\{A^{(j)}z^{-1}, 
\ A^{(j)}\in \CC\}.
\end{equation}
Then there is a p-submanifold $\calW \subset \nwBase$ containing  $P^{(K)}(\frakq)$ such that the differential of the function $v(\vec A; z)$
restricted to $\calW$ equal the subspace $W$.  
\end{proposition}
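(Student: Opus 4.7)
The plan is to reduce to the single-cluster case already proved in Proposition~\ref{p:localv}, exploiting the product structures of $\calA_K(M)$ and $\nwBase$ near $\frakq$ and $P^{(K)}(\frakq)$, and the fact (established in the preceding subsection) that $P^{(K)}$ is a local diffeomorphism there.

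First I would unpack the product structure. By definition $\calA_K(M) = \prod_{j \leq k_0}[\CC^{[\beta_j]};0] \times \prod_{j > k_0}[\CC;0]$, and by the construction of $\nwBase$ summarized just before \eqref{e:PK}, locally $\nwBase = \prod_{j \leq k_0}\calE_{[\beta_j]}^{([\beta_j])} \times \prod_{j > k_0}[M;p_j]$. The map $P^{(K)}$ factorizes as the product of the single-cluster maps $P^{([\beta_j])}$ together with elementary blowups in the non-splitting factors, and is therefore a local diffeomorphism at $\frakq$ by the local analysis.

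Next I would leverage the decomposition $v(\vec A;z) = \sum_{j \leq k_0} v^{(j)}(\vec A^{(j)};z) + \sum_{j > k_0}(\beta_j - 1)\log|z - z^{(j)}|$. Because the cone points $p_1, \ldots, p_k$ are distinct, each summand is supported (in the $z$-variable) in a disjoint coordinate patch, so the differential $dv|_{\vec A = 0}$ splits as a direct sum $\bigoplus_j dv^{(j)}|_{\vec A^{(j)} = 0}$. By the single-cluster computation from part (iii) of Proposition~\ref{p:localsheet} applied in each factor, $dv^{(j)}$ identifies $\CC^{[\beta_j]}$ (real-linearly) with the space of Laurent terms $\{\sum_\ell A_\ell^{(j)} z^{-\ell}\}$ localized near $p_j$; summing, the global differential maps $\bigoplus_j \CC^{[\beta_j]}$ isomorphically onto the target space $\RR^{2K}$ of \eqref{e:AK}.

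Given an arbitrary subspace $W \subset \RR^{2K}$, set $\widetilde W := (dv)^{-1}(W) \subset \bigoplus_j \CC^{[\beta_j]}$. The proper transform of the linear subspace $\widetilde W$ under the iterated blowups defining $\calA_K(M)$ is a p-submanifold $\calW_0 \subset \calA_K(M)$ that contains the entire preimage of $\vec 0$, and in particular contains $\frakq$. Its tangent space at $\frakq$ is the direct sum of the front-face directions, on which $dv$ vanishes (since $v|_{\vec A = 0}$ is independent of the angular coordinates), and the transverse directions along $\widetilde W$, which map precisely onto $W$. Pushing forward by the local diffeomorphism $P^{(K)}$ then gives $\calW := P^{(K)}(\calW_0) \subset \nwBase$, which is a p-submanifold containing $P^{(K)}(\frakq)$ and has $dv|_\calW = W$ as required.

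The main technical ingredient, already implicit in the single-cluster case, is that proper transforms of linear subspaces through the center of an iterated blowup of a vector space remain p-submanifolds at every stage; everything else is bookkeeping via the product decomposition and the local diffeomorphism property of $P^{(K)}$.
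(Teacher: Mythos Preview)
Your overall strategy---reduce to the single-cluster Proposition~\ref{p:localv} using the product structure of $\calA_K(M)$ and $\nwBase$ together with the fact that $P^{(K)}$ is a local diffeomorphism---is exactly the paper's approach; indeed, the paper gives no separate proof of Proposition~\ref{p:globalv} beyond the product discussion immediately preceding it.

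However, there is a genuine error in your execution. The proper transform of a linear subspace $\widetilde W \subset \CC^n$ under the blowup $[\CC^n;0] \to \CC^n$ does \emph{not} contain the entire exceptional divisor: it is the closure of the lift of $\widetilde W \setminus \{0\}$, and this meets the front face only in the sphere $\bS(\widetilde W) \subset \bS^{2n-1}$. Hence your $\calW_0$ contains $\frakq$ only when the direction $\omega_0$ corresponding to $\frakq$ already lies in $\widetilde W$, which is not guaranteed since $\frakq$ and $W$ are chosen independently. Your description of the tangent space of $\calW_0$ at $\frakq$ as ``the direct sum of the front-face directions and the transverse directions along $\widetilde W$'' is also dimensionally inconsistent: that would give dimension $(2K-1) + \dim \widetilde W > 2K = \dim \calA_K(M)$ whenever $\dim W > 1$. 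What you are describing is closer to the \emph{total} transform $\pi^{-1}(\widetilde W)$, which does contain the whole front face but is not a p-submanifold (it is singular along the intersection of the front face with the proper transform).

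The content the paper is actually extracting here is simply that, because $P^{(K)}$ is a local diffeomorphism up to the boundary and the $\vec A$-derivatives of $v$ at $\vec 0$ realize all of $\RR^{2K}$ (factor by factor via Proposition~\ref{p:localsheet}(iii)), any linear constraint on $\dot v$ can be rewritten as a smooth constraint in the coordinates of $\nwBase$ near the front face. The resulting p-submanifold meets $\widetilde F_0$ in the locus where the corresponding direction lies in $\widetilde W$; the inclusion of a prescribed boundary point is what happens in the application (\S 8), where $\frakq$ is taken on that locus. Your reduction to the single-cluster case is fine; just replace the incorrect ``proper transform contains the whole front face'' step with this more careful statement.
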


\subsection{Examples of cone point splitting}
We now illustrate the ideas and calculations above with some explicit calculations when $J =1, 2$ or $3$.
As before, we work locally in the disk near a single cone point. 

\quad

\noindent\textbf{Example 1 \ \  $2\pi\beta_{0} \in (2\pi, 4\pi)$: } In this simplest case, $J=[\beta_{0}]=1$,
and hence the point $p_0$ moves rather than splits. The family of conformal factors in this case is 
$$
v(\vec A; z)=\log|z+A_{1}|, 
$$
so, writing $A_{1}=\beta_{0}^{1/\beta_{0}}(e_{1}'+ie_{1}'')$, this has infinitesimal variation
\begin{gather*}
\frac{\partial v}{\partial A_1}(0)  =\tfrac12 \Re(1/z),\ \ \mbox{i.e.,}\ \ 
\frac{\partial v}{\partial  e_1'}(0)= \cos\theta \, r^{-1/\beta_{0}},  \ 
\frac{\partial v}{\partial  e_1''}(0)= \sin\theta \, r^{-1/\beta_{0}}.
\end{gather*}
This computation is independent of the phase of $A_{1}$. 

\quad 

\noindent\textbf{Example 2 \ \  $2\pi\beta_{0}\in (4\pi, 6\pi)$:}  Now $p_0$ splits into $2$ cone points with
an admissible pair of cone angles $2\pi B^{0}_{1}$ and $2\pi B^{0}_{2}$, i.e., $(B^{0}_{1}-1)+(B^{0}_{2}-1)=\beta_{0}-1$.
Set $2\frac{B_{0}^{i}-1}{\beta_{0}-1}=\frakb_{i}$ and solve for the functions $z_{i}(\vec A)$, $i = 1,2$, such that 
$$
v(\vec A; z)=\frakb_{1}\log|z-z_{1}(\vec A)|+\frakb_{2}\log|z-z_{2}(\vec A)|.
$$
satisfies 
$$
\log |P(\vec A;z)|=\log |z^{2}+A_{1}z+A_{2}|=v(\vec A;z) + \calO(|\vec A|^{1+\epsilon}). 
$$
This leads to the system of equations
\[
\begin{split}
\frakb_{1}z_{1}+\frakb_{2}z_{2}&=\lambda_{1}+\lambda_{2}=R_{1}(\vec A)=-A_{1}\\
\frakb_{1}z_{1}^{2}+\frakb_{2}z_{2}^{2}&=\lambda_{1}^{2}+\lambda_{2}^{2}=R_{2}(\vec A)=(A_{1})^{2}-2A_{2}.
\end{split}
\]
Since $\frakb_{1}+\frakb_{2}=2$, the restriction that $\vec b\notin \widehat{\Delta}=\{\vec \frakb: \frakb_{1}+\frakb_{2}=0\}$
is vacuous. This system has two solutions: 
\[
 ( z_{1}^{(1)}, z_2^{(1)}) = \left(\frac{-A_{1}+ \sqrt{((A_{1})^{2}-4A_{2})\frakb_{2}/\frakb_{1}}}{2}, \frac{-A_{1}- \sqrt{((A_{1})^{2}-4A_{2})\frakb_{1}/\frakb_{2}}}{2}\right)
\]
and
\[
 ( z_{1}^{(2)}, z_2^{(2)}) = \left(\frac{-A_{1}- \sqrt{((A_{1})^{2}-4A_{2})\frakb_{2}/\frakb_{1}}}{2}, \frac{-A_{1}+ \sqrt{((A_{1})^{2}-4A_{2})\frakb_{1}/\frakb_{2}}}{2}\right).
  \]
  The weighted discriminant locus $\calS_{\vec\frakb}=\{\vec A: 4A_{2}-(A_{1})^{2}=0\}$ is independent of $\vec \frakb$ in this special
  case, and corresponds to solutions on the diagonal $z_{1}=z_{2}=-\frac{A_{1}}{2}$. The map
\[
\calF: \CC^{2}\rightarrow \CC^{2}, \qquad (z_{1}, z_{2})\mapsto (A_{1}, A_{2})
\]
is a $2$-to-$1$ branched cover ramifying along the diagonal $\{z_1 = z_2\}$. 
Finally, the two local inverses to $\calF$ are given by the explicit formul\ae\ above. These also imply that
$|z_{i}|\leq C \max\{|A_{1}|, |A_{2}|^{1/2}\}$ for any fixed $\vec \frakb$. For the construction of $\calE_{2}^{(2)}$, see previous subsection.

\quad

\noindent\textbf{Example 3 \ \  $2\pi\beta_{0}\in (6\pi, 8\pi):$}  Fix an admissible triple $(B_0^1, B_0^2, B_0^3)$ and set $\frakb_{i}=3\frac{B_{0}^{i}-1}{\beta_{0}-1}$, $i=1,2,3$.  The functions $z_i(\vec A)$ in 
$$
v(\vec A;z)=\sum_{i=1}^{3}\frakb_{i}\log|z-z_{i}(\vec A)|,
$$
satisfy the set of equations
\[
\begin{split}
\sum b_{i}z_{i}&=\sum \lambda_{i}=-A_{1}\\
\sum b_{i}z_{i}^{2}&=\sum \lambda_{i}^{2}=(A_{1})^{2}-2A_{2}\\
\sum b_{i}z_{i}^{3}&=\sum \lambda_{i}^{3}=-(A_{1})^{3}+3A_{1}A_{2}-3A_{3}
\end{split}
\]

If $\vec \frakb \notin \widehat\Delta$, i.e., $\frakb_i + \frakb_j \neq 0$ for $i \neq j$, these equations have $3! = 6$ solutions
in $\CC^{3}$, counted with multiplicity.  The map 
$$
\calF: \CC^{3}\setminus\{\mbox{partial diagonals}\}\rightarrow \CC^{3}\setminus\calS_{\vec \frakb}
$$
is a $6$-to-$1$ covering. Unfortunately, it is no longer so easy to find an explicit expression for the weighted discriminant
locus $\calS(\vec \frakb)$ in this case. 

However, in the special case that all $\frakb_{j}=1$, the six solutions $\vec Z^{(i)}$ are the rearrangements of the roots
of the polynomial $z^{3}+A_{1}z^{2}+A_{2}z+A_{3}=0$.  The bound $\max\{|z_{j}|\}\leq C \max\{|A_{i}|^{1/i}\}$
follows from the explicit formula for the roots of a cubic. 

We now compute the asymptotic expansions of $z_{i}$. 
Write $z_{i}=\sum_{j=1}^{3} \rho^{j}c_{ij}+\calO(\rho^{4})$ and plug into the equations~\eqref{e:Rell}, then we get
\[
\begin{split}
-\tilde A_{1}\rho^{3}&=\sum_{i=1}^{3} \frakb_{i} [ c_{i1}\rho+ c_{i2}\rho^{2}+c_{i3}\rho^{3}]\\
-2\tilde A_{2}\rho^{3}&=\sum \frakb_{i} [c_{i1}^{2}\rho^{2}+2 c_{i1}c_{i2}\rho^{3}+(2c_{i1}c_{i3}+c_{i2}^{2})\rho^{4}]\\
-3\rho^{3}e^{i\theta}&=\sum \frakb_{i} [c_{i1}^{3}\rho^{3}+3c_{i1}^{2}c_{i2}\rho^{4}+(3c_{i1}^{2}c_{i3}+3c_{i1}c_{i2}^{2})\rho^{5}]
\end{split}
\]
Then one can solve for $c_{ij}$ iteratively as described above. Below we give an explicit computation for the case when all $\frakb_{i}=1$.

We first solve for $\{c_{i1}\}_{i=1}^{3}$ which satisfy
$$
\sum_{i}  c_{i1}=0, \ \sum_{i}  c_{i1}^{2}=0, \ \sum_{i}  c_{i1}^{3}=-3e^{i\theta}
$$ 
We choose one of the six solutions (which come from permutations):
$$
c_{j1}=-e^{i\theta/3}\tau^{j}, \ j=1,2, 3
$$
where $\tau=\frac{-1+\sqrt{3}i}{2}$.
We then solve for $\{c_{i2}\}$ which satisfy
$$
\sum c_{i2}=0, \ \sum 2c_{i1}c_{i2}=-2\tilde A_{2}, \ \sum 3 c_{i1}^{2}c_{i2}=0
$$
which gives
$$
c_{12}= -\frac{1 + i \sqrt{3}}{6}\tilde A_{2} e^{-i\theta/
   3},\ c_{22} = \frac{3 i + \sqrt{3}}{3 (-3 i + \sqrt{3})} \tilde A_{2}  e^{-i\theta/
   3}, \ c_{32} = \frac{1}{3} \tilde A_{2} e^{-i\theta/
   3}
$$
Then the equations for $\{c_{i3}\}$ are
$$
\sum  c_{i3}=-\tilde A_{1}, \ \sum (2c_{i1}c_{i3}+c_{i2}^{2})=0, \ \sum (3c_{i1}^{2}c_{i3}+3c_{i1}c_{i2}^{2})=0
$$
which gives
$$
c_{13}=c_{23}=c_{33}=-\frac{\tilde A_{1}}{3}.
$$


\section{The obstruction subbundle and projected solutions}
Our next step is to construct families of solutions of the Liouville equation modulo the finite dimensional space of eigenfunctions
\[
E_2 = \{\phi \in \calD^{m,\alpha}_{\Fr}(M_{\frakp_0}):  \Delta_{g_0} \phi = 2\phi\}.
\]
These will be called projected solutions. The remainder of the argument, in the next section, consists in identifying the subfamilies
which can be deformed to exact spherical conic metrics.  

The difficult questions surrounding the parametrization of $K$-tuples of points by vectors $\vec A$ described in the last section
do not play a role here, so we are able to work exclusively on $\calE_K$ and $\calC_K$ here, and lift to $\tilde{\calE}_K$ and
the corresponding space $\widetilde{\calC}_K$ only at the end. 

\subsection{The fibers near the central face}
Following \S\ref{ss:global}, consider a spherical cone metric $g_0$ with conic data $\frakp_{0}=(p_{1}, \dots, p_{k})$ and $\vec \beta$. This uniquely 
determines an `exploded point'
$$
\frakq_0 = (\underbrace{p_{1}, \dots, p_{1}}_{[\beta_{1}]}, \ \underbrace{p_{2}, \dots, p_{2}}_{[\beta_{2}]}, \ \dots, \ \underbrace{p_{k_{0}}, 
\dots, p_{k_{0}}}_{[\beta_{k_{0}}]}, \  p_{{k_{0}+1}}, p_{k_{0}+2}, \dots, p_{k})\in M^{K}.
$$
Any nearby point $\frakq \in M^K$ determines a set of clusters $\frakq^{(i)}_j$, $i = 1, \ldots, k_0$, $j = 1, \ldots, [\beta_i]$, 
where $q^{(i)}_1, \ldots, q^{(i)}_{[\beta_i]}$ all lie in a small neighborhood of $p_i$, along with the remaining isolated points 
$q_i$, $i = k_0+1, \ldots, k$, each lying near the corresponding point $p_i$.
The set of lifts of $\frakq_0$ to $\calE_K$ fills out a corner 
\begin{equation}\label{e:F0}
F_{0}:=\bigcap_{i=1}^{k_{0}} F_{12\dots [\beta_{i}]}^{i} \subset \calE_K,
\end{equation}
where $F^{i}_{12\dots[\beta_{i}]}$ is the face arising from blowing up the partial diagonal $\{q^{i}_{1}=\dots=q^{i}_{[\beta_{i}]}\}$. We denote points on $\calE_{K}$ by $\frakq$. The corresponding 
lift $\pi_K^{-1}(\frakq) \subset \calC_K$ (where $\pi_{K}: \calC_{K}\rightarrow \calE_{K}$) is a union of hemispheres, each lying over $F_0$,  attached 
in succession to the punctured surface $M_{\frakp_{0}}$, cf. \S 3 and, for further details, \cite{MZ}.  

We fix a neighborhood $\calU$ of  $F_0$ in $\calE_{K}$, and set $\calV=\pi_{K}^{-1}(\calU)$.  If $\frakq\in \calU$, then the fiber $\pi_{K}^{-1}(\frakq)$ 
contains, as one of its constituents, the surface $M$ blown up at the points $q_j$ of $\beta_{K}(\frakq)\in M^{K}$. These points lie in two classes: 
\begin{itemize}
\item  
when $i > k_0$, the cone angles at the initial points $p_i$ are less than $2\pi$, so the corresponding points $q_i$ move without splitting; 
\item on the other hand, if $i \leq k_0$, then the cone angles at $p_i$ and at the points of the associated cluster $q_i^{(j)}$, $j = 1, \ldots, [\beta_{j}]$
which have split from $p_i$, are greater than $2\pi$.  
\end{itemize}

Fix an admissible set of cone angle parameters $\vec B \in B$ (see Definition~\ref{def:B} for the definition of being admissible). We now produce, for each $\frakq\in \calU$, a spherical conic metric $g_{\frakq, \vec B}$ 
on the regular part of the fiber $\pi_{K}^{-1}(\frakq)$ which solves the Liouville equation modulo a certain finite dimensional obstruction subspace.

In the following, we use weighted $b$-H\"older spaces $\calC^{m,\alpha}_b$ on each fiber; these are the restrictions
of the space $\calC_{b}^{m,\alpha}(\calC_{K})$ to that fiber, see \cite[Lemma 5]{MZ}.

\subsection{The first approximation}\label{ss:initial}
Fix a smooth (nonconic) metric $h_{0}$ on $M$ which is flat in balls $\calB_j(\epsilon)$ containing
$p_j$, $j \leq k_0$.  Next define the family of (nonconstant curvature) conic metrics $g_{1} = g_{1}(\vec B, \frakq)$,
parametrized by $\vec B$ and $\frakq\in \calU$, which equals $g_{1}=e^{2v(\vec B, \frakq)}h_{0}$ in each $\calB_j$; here
\begin{equation}\label{e:v}
v(\vec B,\frakq)=\sum_{i=1}^{[\beta_{j}]} (B_{i}^{j}-1)\log |z-q_{i}^{j}|.
\end{equation}
We can arrange, for simplicity, that near each $p_j$, $j > k_0$, $g_1$ is spherical with cone angle $2\pi B^j$. This family of
metrics is polyhomogeneous on $\pi_{K}^{-1}(\calU)$, cf.\ Theorem 1 in~\cite{MZ}.

We next modify $g_1$ to a new family $g_2(\vec B, \frakq)=e^{2\tilde v(\vec B,\frakq)} g_{1}$ which has the same conic data, but 
such that $g_2$ is the original conic metric $g_{0}$ on the constituent $M_{\frakp_{0}}$ in the singular fiber $\pi_{K}^{-1}(\frakq_0)$.
The function $\tilde v$ on $\pi_K^{-1}(\calU)$ is first determined on the central fibers over $F_{0}$ by the equation
\begin{equation}\label{e:vtilde}
e^{2\tilde v(\vec B, \frakq_{0})}g_{1}|_{M_{\frakp_{0}}}=g_{0}, 
\end{equation}
and then extended to be polyhomogeneous over the whole neighborhood $\calV$. We can also arrange for this extension to satisfy 
$\tilde v(\rho)-\tilde v(0)=\calO(\rho_{j}^{J+\epsilon})$ where $\rho_{j}$ is the boundary defining function of front face $F^{j}$ 
corresponding to the cluster of $\beta_{j}$ and $J=\max\{[\beta_{j}], 1\}$.

\subsection{Projected solution family}
We now solve the Liouville equation up to a finite rank error on all fibers near $\pi_K^{-1}(\frakq_0)$.   To do this, we first 
construct a vector bundle $\EE_2$ over $\calU \subset \calE_K$ which extends the $\ell$-dimensional eigenspace 
$E_2 = \ker (\Delta_{g_0} - 2)$ on $M_{\frakp_0}$. Specifically, first pull back $E_2$ to a trivial rank $\ell$ vector bundle
on the face $F_0$ lying over $\frakp_0$. Then extend this trivial bundle smoothly to $\calU$.  This extension is not yet
well adapted to the family of Laplacians, so we arrange this next. 

For any $\frakq \in \calU$, consider the resolvent $(\Delta_{g_2(\frakq)} - \lambda)^{-1}$ of the Friedrichs extension of the Laplacian.
By standard eigenvalue perturbation theory, there exists some small $\epsilon > 0$ so that, shrinking $\calU$ if necessary,
then the spectrum of $\Delta_{g_2(\frakq)}$ does not intersect the loop $\gamma = \{|\lambda - 2| = \epsilon\}$.  We then define
\[
\Pi_{\frakq}=(2\pi i)^{-1} \int_{\gamma} (\Delta_{g_{2}( \frakq)} -\lambda)^{-1} \, d\lambda;
\]
this is an $L^2$-orthogonal projector onto the sum of eigenspaces for all eigenvalues inside $\gamma$. Its range $\EE_2$ 
is a smooth rank $\ell$ bundle, with $\EE_2|_{\frakq} \subset \calD^{m,\alpha}_{\Fr}(M_{\pi(\frakq)})$. Furthermore,
$\Pi_{\frakq}^\perp = \mbox{Id} - \Pi_{\frakq}$ projects onto the complementary finite codimensional subspace 
$\EE_2|_{\frakq}^\perp \subset \calC^{m,\alpha}_b(M_{\pi(\frakq)})$. 

\begin{proposition}\label{p:eigen}
For each $\frakq\in \calU$, there exists a unique $u\in \EE_2^{\perp}|_{\frakq}$ and $f\in \EE_2|_{\frakq}$ such that 
\begin{equation}\label{e:eigenerror}
\Delta_{g_{2}}u - e^{2u}+K_{g_{2}}=f.
\end{equation}
Both $u$ and $f$ depend smoothly on $\frakq \in \calU$. 
\end{proposition}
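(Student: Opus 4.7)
The plan is to recast the equation as the vanishing of a smooth nonlinear map between fixed Banach spaces and apply the implicit function theorem at $(\frakq_0, 0)$.  Using the polyhomogeneous structure of the family $g_2(\frakq)$ on $\pi_K^{-1}(\calU)$ and shrinking $\calU$ if necessary, I will trivialize the bundle whose fiber over $\frakq$ is $\calD^{m+2,\alpha}_{\Fr}(M_{\pi_K(\frakq)})$ together with its finite rank subbundle $\EE_2$ and the associated projection $\Pi_\frakq$.  In such a trivialization, define
\[
\calN(\frakq, u) := \Pi^\perp_\frakq\bigl(\Delta_{g_2(\frakq)} u - e^{2u} + K_{g_2(\frakq)}\bigr),
\qquad u \in \EE_2^\perp|_\frakq \cap \calD^{m+2,\alpha}_{\Fr}.
\]
By construction \eqref{e:vtilde}, $g_2(\frakq_0) = g_0$ is spherical, so $K_{g_0} = 1$ and $\calN(\frakq_0, 0) = \Pi^\perp_{\frakq_0}(1-1) = 0$.

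The next step is to compute the linearization in $u$ at $(\frakq_0, 0)$, which equals $\Pi^\perp_{\frakq_0} \circ L_{g_0}$ with $L_{g_0} = \Delta_{g_0} - 2$, and to verify that it is a bounded isomorphism from $\EE_2^\perp|_{\frakq_0} \cap \calD^{m+2,\alpha}_{\Fr}$ onto $\EE_2^\perp|_{\frakq_0} \subset \calC^{m,\alpha}_b$.  By definition $\EE_2|_{\frakq_0} = \ker L_{g_0}$ on the Friedrichs domain, and the spectral contour $\gamma$ encloses only the eigenvalue $2$, so the $L^2$-orthogonal decomposition $\calD_{\Fr} = \EE_2 \oplus \EE_2^\perp$ is invariant under $L_{g_0}$ and the restricted operator has trivial kernel and closed $L^2$-range equal to $\EE_2^\perp|_{\frakq_0}$.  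Upgrading this to the $b$-H\"older scale is where Propositions~\ref{pr:duality} and~\ref{partexp} enter: they show that $L_{g_0}$ is surjective on the appropriate weighted H\"older space modulo the finite-dimensional obstruction $\EE_2|_{\frakq_0}$, so composing with $\Pi^\perp_{\frakq_0}$ yields the required isomorphism.

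With this isomorphism in hand, the implicit function theorem produces a unique smooth assignment $\frakq \mapsto u(\frakq) \in \EE_2^\perp|_\frakq \cap \calD^{m+2,\alpha}_{\Fr}$ defined near $\frakq_0$ with $\calN(\frakq, u(\frakq)) \equiv 0$ and $u(\frakq_0) = 0$; then
\[
f(\frakq) := \Pi_\frakq\bigl(\Delta_{g_2(\frakq)} u(\frakq) - e^{2u(\frakq)} + K_{g_2(\frakq)}\bigr) \in \EE_2|_\frakq
\]
satisfies \eqref{e:eigenerror} by construction and inherits the same smooth dependence on $\frakq$.  The main technical obstacle is not the IFT itself but the bookkeeping needed to apply it when the underlying fibers, function spaces, and projections all vary with $\frakq$.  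Three ingredients are needed: first, that $\frakq \mapsto g_2(\frakq)$ is $\calC^k$ into an appropriate space of fiber metrics, which relies on the polyhomogeneity of $g_2$ on $\calC_K$ established in \S\ref{ss:initial}; second, that $\Pi_\frakq$ is smooth in $\frakq$, which follows from the Cauchy integral representation together with the fact that $\gamma$ remains spectrum-free uniformly on $\calU$; and third, that the substitution $u \mapsto e^{2u}$ acts smoothly on $\calD^{m+2,\alpha}_{\Fr}$, which is a consequence of the Banach algebra property of $\calC^{m,\alpha}_b$ and the continuous embedding into $L^\infty$.  Once this framework is in place, the conclusion follows immediately from the standard IFT.
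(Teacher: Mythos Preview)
Your proposal is correct and follows essentially the same approach as the paper: both define the projected nonlinear map $\calN(\frakq,u) = \Pi_\frakq^\perp(\Delta_{g_2}u - e^{2u} + K_{g_2})$, observe that $\calN(\frakq_0,0)=0$, verify the linearization $\Pi_{\frakq_0}^\perp \circ L_{g_0}$ is an isomorphism on the orthogonal complement, and conclude via the implicit function theorem (the paper phrases this as a contraction argument, explicitly noting that IFT would suffice but that the contraction viewpoint is convenient for the polyhomogeneity analysis in the following subsection). Your write-up is somewhat more explicit about the supporting ingredients---the smoothness of $\Pi_\frakq$ via the Cauchy integral, the Banach-algebra structure for the nonlinearity, and the role of Propositions~\ref{pr:duality} and~\ref{partexp}---but the logical skeleton is identical.
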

\begin{proof}
By construction, if $\frakq \in \calU$, the linearization in $u$ of 
\[
(\frakq,u) \mapsto N(\frakq,u) := \Pi_{\frakq}^\perp \circ (\Delta_{g_2(  \frakq)} u - e^{2u} + K_{g_2})
\]
is an isomorphism $\Pi^\perp_{\frakq} \calD^{m,\alpha}_\Fr(M_{\pi(\frakq)}) \to \Pi_{\frakq}^\perp\calC^{m,\alpha}_b(M_{\pi(\frakq)})$. 
Furthermore, $N(\frakq_0, 0) = 0$.  Since $N$ depends continuously on $\frakq$, $N(\frakq,0)$ remains small in norm for
$\frakq \in \calU$. Using the invertibility of this linearization, a standard contraction argument produces both the solution $u$ and 
the error term $f$.   Obviously, we could simply invoke the inverse function theorem, but for the arguments in the next subsection
it is helpful to recall that this relies on a contraction. 
\end{proof}

\subsection{Polyhomogeneity of projected solution family}\label{ss:poly}
\begin{proposition}\label{p:polyhomo}
The solution $u$ to~\eqref{e:eigenerror} is polyhomogeneous on $\calC_K$.
\end{proposition}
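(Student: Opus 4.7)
The plan is a standard bootstrap: first promote the $\calC^{m,\alpha}_b$-regularity of $u$ coming out of the contraction argument of Proposition~\ref{p:eigen} to full conormality on $\calC_K$, and then extract asymptotic expansions at each boundary face by exploiting the normal operator structure of $L_{g_2(\frakq)}=\Delta_{g_2(\frakq)}-2$ together with the indicial calculus of \S 4.

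First I would show conormality. Since $g_2$ is polyhomogeneous on $\calC_K$ and $\Pi_\frakq$ depends smoothly (hence conormally) on $\frakq\in\calU$, commuting any product $V_1\cdots V_m$ of $b$-vector fields on $\calC_K$ through the equation $\Pi_\frakq^\perp(\Delta_{g_2}u-e^{2u}+K_{g_2})=0$ produces a linear equation for $V_1\cdots V_m u$ with source lying in a space of functions that, by induction on $m$, is in $\calC^{0,\alpha}_b$. The projected linearization $\Pi_\frakq^\perp L_{g_2(\frakq)}$ is fiberwise invertible on $\Pi_\frakq^\perp\calD^{m,\alpha}_\Fr$ with uniformly bounded inverse, so $V_1\cdots V_m u$ is in $\calC^{0,\alpha}_b$ as well. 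Iterating gives $u\in\bigcap_m\calC^{m,\alpha}_b(\calC_K)=\calA^0(\calC_K)$.

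Next I would derive the polyhomogeneous expansion at each boundary hypersurface of $\calC_K$. At the original conic points (the faces $F^\sigma_i$ in the notation of \S 3) the indicial roots of $L_{g_2}$ are $\{j/B_i:j\in\ZZ\}$, so the analysis of Proposition~\ref{partexp} gives the expected partial expansion; the logarithmic term at $\gamma=0$ is killed by membership in $\EE_2^\perp\cap\calD_\Fr$. At a front face of $\calC_K$ lying over $F_0\subset\calE_K$, the normal operator of $L_{g_2}$ decomposes across the tied fiber: it is $\Delta_{g_0}-2$ on the constituent $M_{\frakp_0}$ (by the normalization \eqref{e:vtilde}) and a rescaled flat conic Laplacian on each attached hemisphere. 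The source $f\in\EE_2$ on $M_{\frakp_0}$ is already polyhomogeneous there, and since $\Pi_\frakq^\perp$ projects off exactly the obstruction of $\Delta_{g_0}-2$, the normal problem at $F_0$ can be solved in the appropriate weighted space; its indicial roots and the resulting index set are computed from the spectra of the two model operators in the same way as in \cite[Ch.~5-6]{MZ}.

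The hard part, and the main obstacle, is producing a polyhomogeneous parametrix for $\Pi_\frakq^\perp L_{g_2(\frakq)}$ on all of $\calC_K$ which respects the full corner structure, so that the asymptotic expansions at individual faces are compatible at the corners and one obtains a genuine product-type polyhomogeneous expansion. I would follow the scheme of \cite{MZ}: assemble an approximate right inverse $G_0$ by gluing the model inverses at each face using a partition of unity subordinate to the boundary hypersurfaces, improve $G_0$ iteratively by pushing the remainder to progressively higher weights at each face, and sum the resulting Neumann series in an appropriate polyhomogeneous category. The projection $\Pi_\frakq^\perp$ is the one new element relative to \cite{MZ}, but since $\EE_2$ is a smooth finite-rank subbundle of polyhomogeneous sections over $\calU$, composition with $\Pi_\frakq$ preserves the polyhomogeneous class. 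Once such a parametrix $G$ is in hand, the equation reads $u=G\bigl(e^{2u}-K_{g_2}+(\text{finite-rank correction})\bigr)$, which is a polyhomogeneity-preserving fixed-point problem; uniqueness of $u$ in the conormal class (from Proposition~\ref{p:eigen}) forces $u$ itself to be polyhomogeneous on $\calC_K$.
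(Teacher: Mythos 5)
Your plan is in the right spirit and would likely succeed, but it takes a genuinely different technical route than the paper. The paper does not first prove conormality and then construct a parametrix; instead it directly constructs the polyhomogeneous expansion face-by-face, following the scheme of \cite[Section~6]{MZ}. Concretely, the proof of Lemma~\ref{l:2pt} proceeds in five steps: (1) polyhomogeneity of the initial conic metric $g_1$ near the hemisphere face $\frakC_{12}$, quoted from \cite{MZ}; (2) the expansion of $\tilde v$ at the corner $M_{\frakp_0}\cap\frakC_{12}$ inside the central fiber; (3) the expansion at the face $M_{\frakp_0}$ using the invertibility of $(\Delta_{g_2}-2)|_{M_{\frakp_0}}$ on $\EE_2^\perp$; (4) the expansion at $\frakC_{12}$ by rescaling in projective coordinates so that the linearized operator becomes a flat conic Laplacian; and (5) a bootstrap step verifying that the expansion so constructed actually matches the fixed-point solution $u$, i.e., that the remainders are conormal and improve in weight. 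The general case is then an iteration of these steps along the tower of hemispheres.

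Your alternative — conormality by commuting $b$-vector fields through the equation, followed by a glued parametrix and a Neumann series — is a legitimate approach in the $b$-calculus framework, and your closing observation that uniqueness from Proposition~\ref{p:eigen} forces $u$ to coincide with the polyhomogeneous solution is exactly the paper's Step 5 in a different guise. The caveat is that your first paragraph states that $\Pi_\frakq^\perp L_{g_2(\frakq)}$ has a \emph{uniformly} bounded inverse on fiberwise spaces; this uniformity as $\frakq\to F_0$ is precisely the delicate point of the degenerating fiber geometry, and is not a priori available — it is in fact what the normal-operator analysis at the tied fiber establishes. You do acknowledge the difficulty ("the hard part") but locate it at the compatibility of expansions at corners rather than at the uniform invertibility needed for your conormality bootstrap. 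You also attribute a parametrix/Neumann-series scheme to \cite{MZ}, whereas \cite{MZ} in fact proceeds by the direct face-by-face expansion construction that the present paper follows. Neither of these is a fatal flaw, but you should be aware that the uniform estimates you invoke in step one are not free: they are the content of the machinery you defer to step three, so the logical order in which you present the plan would need to be rearranged to match what actually carries the argument.
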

The detailed proof of this same result for flat and hyperbolic metrics appears in the lengthy \cite[Section 6]{MZ}.
The present setting differs only very slightly, because of the finite corank projection. Hence we shall sketch
the argument only briefly since the modifications needed are very minor.  In fact, near conic points which do not split, 
that proof carries over verbatim. Thus we focus on a neighborhood of some point which splits.  For simplicity
we describe the proof in the simplest situation where one conic point splits into two.  The general case
proceeds in the same way, but more steps. 

\begin{lemma}\label{l:2pt}
If $\beta_{1}\in (2,3)$, which implies that $p_1$ splits into two points, then the solution family $u$ is polyhomogeneous on $\calC_K$. 
\end{lemma}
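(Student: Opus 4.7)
The plan is to follow the template of \cite[Section 6]{MZ}, modifying it to accommodate both the spherical (rather than flat/hyperbolic) curvature term $e^{2u}$ and the finite-rank projection $\Pi^{\perp}_{\frakq}$ built into Proposition~\ref{p:eigen}. Throughout, I work locally near the single front face $F^1_{12}\subset \calE_K$ created by blowing up the diagonal $\{q_1^{(1)}=q_1^{(2)}\}$, since, as noted, at non-splitting cone points the argument from \cite{MZ} applies verbatim. Let $\rho$ denote a boundary defining function for $F^1_{12}$, and recall that on the singular fiber above $F^1_{12}$, the new hemisphere component $\frakC_{12}$ carries a flat conic metric with two interior cone points (of angles $2\pi B_1^{(1)}, 2\pi B_2^{(1)}$) and an asymptotic conic end with angle $2\pi\beta_1$ where it attaches to $M_{\frakp_0}$.

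First, I would set up the linearization. Let $L_{\frakq} := \Pi^{\perp}_{\frakq}\circ(\Delta_{g_2(\frakq)} - 2e^{2u(\frakq)})$; by Proposition~\ref{p:eigen}, this is an isomorphism from $\Pi^{\perp}_{\frakq}\calD^{m,\alpha}_{\Fr}$ to $\Pi^{\perp}_{\frakq}\calC^{m,\alpha}_b$ for $\frakq$ near $\frakq_0$. The key structural fact, inherited from \cite{MZ}, is that the family $\{\Delta_{g_2(\frakq)}\}$ is a polyhomogeneous family of fiber-conic operators on $\calC_K$, and near $F^1_{12}$ it has two distinct model operators: the original Friedrichs Laplacian on $M_{\frakp_0}$ (acting in the fiberwise direction transverse to the hemisphere) and the flat conic Laplacian on the hemisphere $\frakC_{12}$ with two interior cones and one asymptotic cone. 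The indicial roots at each of these cone points are as in \S4.2, governed by the parameters $B_1^{(1)}, B_2^{(1)}$ and $\beta_1$.

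Next I would construct a polyhomogeneous parametrix $G$ for $L_{\frakq}$ adapted to the fibration $\calC_K\to\calE_K$, using the same normal-operator construction as in \cite[Section 6]{MZ}. The finite-rank projection does not interfere with this construction because $\EE_2$ is a smooth finite-rank subbundle with polyhomogeneous fibrewise kernel, so $\Pi^{\perp}_{\frakq}$ preserves all the relevant conormality and polyhomogeneity classes. The parametrix maps $\rho^{\mu}\calA_{\phg}^{\calI}$ on $\calC_K$ to itself with an index family determined by the indicial roots at each of the faces of $\calC_K$ involved. The contraction-mapping argument in Proposition~\ref{p:eigen} can then be re-run in the scale of polyhomogeneous conormal spaces instead of just $\calC^{m,\alpha}_b$: starting from the approximate solution $u_0$ (which equals the conformal factor $\tilde v$ from \S\ref{ss:initial} modulo polyhomogeneous remainders), the Picard iterates $u_{n+1} = u_n - G(N(\frakq,u_n))$ stay in a polyhomogeneous class, and convergence in the weighted $b$-H\"older norm together with a standard interpolation shows the limit is polyhomogeneous on $\calC_K$.

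The main obstacle is bookkeeping the index family: one must verify that the indicial roots contributed by the hemisphere component $\frakC_{12}$ (which depend on $B_1^{(1)}, B_2^{(1)}, \beta_1$) do not collide with those of $M_{\frakp_0}$ in a way that forces logarithmic terms beyond those already accounted for in \cite{MZ}, and that the nonlinearity $e^{2u}$ (expanded as $1+2u+2u^2+\ldots$) closes up inside the chosen polyhomogeneous class. Both issues are resolved exactly as in \cite{MZ}: the admissibility condition on $\vec B$ (no subcluster merging to $2\pi$) ensures the relevant indicial sets are disjoint up to the finite shifts introduced by multiplication, and the exponential nonlinearity is handled by the standard fact that polyhomogeneous functions form an algebra closed under composition with entire functions. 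With these verifications in place the polyhomogeneity of $u$ on $\calC_K$ follows, which completes the proof in the case $\beta_1\in(2,3)$; the general case in Proposition~\ref{p:polyhomo} proceeds by the same scheme iterated over the additional splitting faces.
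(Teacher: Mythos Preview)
Your plan and the paper's proof both defer to \cite[Section 6]{MZ}, but they follow that template in different ways. The paper's argument is a face-by-face construction of the formal expansion: first the initial metric $g_1$ is already polyhomogeneous near $\frakC_{12}$; then one computes the expansion of $\tilde v$ at the corner $M_{\frakp_0}\cap\frakC_{12}$ inside $M_{\frakp_0}$; then one extends off $M_{\frakp_0}$ by solving the \emph{projected} curvature equation term by term in a boundary defining function $s$ (this is the one place the projection $\Pi^\perp$ enters, and it does so harmlessly since $(\Delta_{g_2}-2)|_{M_{\frakp_0}}$ is invertible on $\EE_2^\perp$); then one does the matching expansion at $\frakC_{12}$ using the rescaled flat conic Laplacian; and finally one shows that the difference between the actual $u$ (already produced by Proposition~\ref{p:eigen} in H\"older spaces) and any finite partial sum of this formal series is conormal of the expected order. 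The polyhomogeneity is thus established \emph{a posteriori} for a solution whose existence is already known.

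Your route is instead to build a polyhomogeneous parametrix for the projected linearization and to run the Picard iteration directly inside a polyhomogeneous class. That strategy is reasonable in principle, but the sentence ``convergence in the weighted $b$-H\"older norm together with a standard interpolation shows the limit is polyhomogeneous'' is where it breaks. Polyhomogeneity is not a property one obtains by interpolating from H\"older regularity, and H\"older-norm limits of polyhomogeneous functions need not be polyhomogeneous unless the index sets are uniformly controlled and one has remainder estimates at every order. What actually makes the argument close is exactly the step the paper isolates as Step~5: one must show, for each $N$, that $u$ minus the $N$-th partial sum of the formal series lies in $\calA^{\mu+N}$. That comparison uses the formal series constructed in Steps~2--4 as an input, not just the iterates of a contraction. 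If you want to keep your parametrix framing, you should replace the interpolation claim with this remainder-estimate step; once you do, your argument and the paper's become essentially the same.
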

\begin{proof}
We use the coordinates as labelled in Figure~\ref{f:coordinates}, and recall how we can produce the successive terms in the polyhomogeneous 
expansion of the solution family. 
\begin{figure}[h]
\centering
 \includegraphics[width=0.5\textwidth]{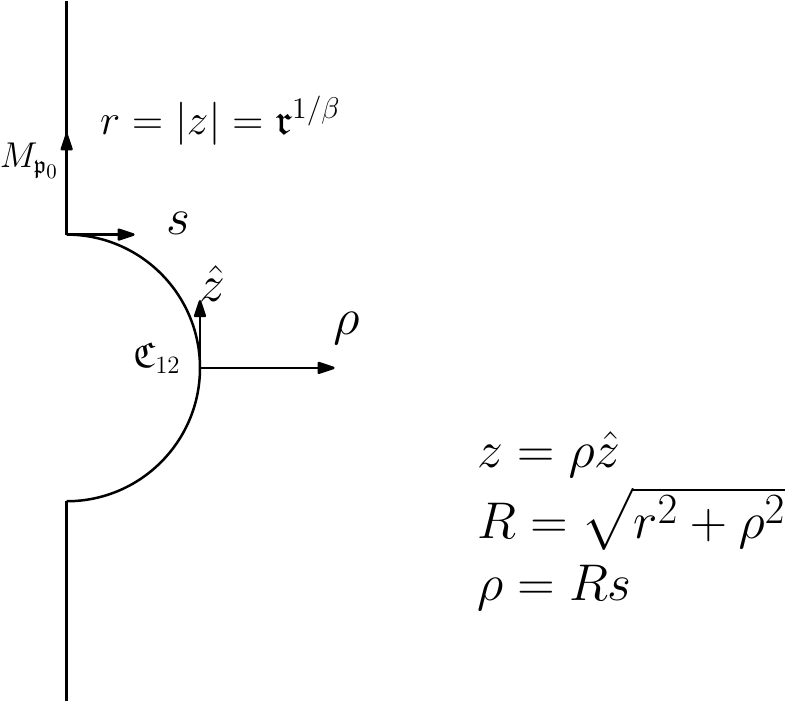}
 \caption{coordinates used in the computation of expansion near the singular fiber $\frakC_{12} \bigcup M_{\frakp_{0}}$}
 \label{f:coordinates}
\end{figure}

\noindent\textbf{Step 1: The initial metric}
The metric $g_1(\vec B, \frakq) = e^{2v(\vec B, \frakq)}h_0$ is flat near $\frakC_{12} \subset \calC_2$.  This is precisely the 
situation in \cite[Eqn. (48)]{MZ} (where this metric was called $g_{0,\frakp}$). Its polyhomogeneity is proved there. 

\noindent\textbf{Step 2:  Expansion at the central face $M_{\frakp_{0}}\cap \frakC_{12}$ in $M_{\frakp_{0}}$.} 
The solution of the curvature equation on the central fiber $M_{\frakp_{0}}$
$$
\Delta_{g_1}\tilde v-e^{2\tilde v}+K_{g_1}=0;
$$
provides the conformal factor $\tilde v$ on $M_{\frakp_{0}}$ in~\eqref{e:vtilde}.  The only difference with~\cite{MZ} is
that here $e^{2\tilde v}g_{1}=g_{0}$ is a priori fixed on the central fiber.  
Choosing appropriate local holomorphic coordinates near the cone point, we obtain the expansion of $\tilde v$ 
on approach to this boundary in $M_{\frakp_{0}}$ as in~\cite[Lemma 6]{MZ}, to get
$$
\tilde v|_{M_{\frakp_{0}}}\sim \sum_{j\in \NN_{0}}a_{j}\frakr^{2j}.
$$ 

\noindent\textbf{Step 3: Expansion at the face $M_{\frakp_{0}}$.}
We next extend $\tilde v$ away from this face and solve the projected curvature equation near $M_{\frakp_{0}}$ but 
away from $\frakC_{12}$. This uses the invertibility of operator $(\Delta_{g_{2}}-2)|_{M_{\frakp_{0}}}$ on 
$\EE_2^\perp$, cf.\ Proposition~\ref{p:eigen}.  Proceeding on as in \cite{MZ}, we obtain that 
$$
\tilde v+ u\sim \sum_{j=0}^{\infty} s^{j}\tilde u_{j}',
$$
where, for $j\geq 1$,
$$
\tilde{u}_{j}'\sim \sum_{\ell\in \NN} r^{j+\ell} a_{j\ell 0}(\phi) + \sum_{\ell, k \in \NN, \ell\geq 0, k\geq 1} r^{j+\ell+2k \beta} a_{j\ell k}(\phi).
$$
Compatibility with the previous step is the fact that $\tilde u_{0}'$ equals the function $\tilde v|_{M_{\frakp_{0}}}$ in Step 2. 

\noindent\textbf{Step 4: Expansion at $\frakC_{12}$}
To extend to an expansion at $\frakC_{12}$, recall from \cite{MZ} that the writing things in terms of the projective coordinates near this face
rescales the linearized Liouville equation at $\frakC_{12}$ to the Laplacian for a flat conic metric on that face, see \cite[Lemmas 7-11]{MZ}. 
The regularity theory for solutions of this equation yields
\begin{equation}\label{e:u}
\tilde v+u\sim \sum_{\alpha=j+2k\beta, \ell\leq j}  R^{\alpha}s^{\ell}u_{\alpha \ell}(\phi).
\end{equation}

\noindent\textbf{Step 5: Polyhomogeneity on $\calC_{K}$}
Putting these steps together, we obtain the entire expansion for $u$.  What remains is to show that this is the actual
asymptotic expansion for the solution family we obtained earlier.  In other words, we must show that the difference
of $u$ and any finite partial sum of this expansion is conormal and vanishes at a rate just larger than the last term
in this partial sum.  This too is carried out in the same way as in \cite[Theorem 1]{MZ}.
\end{proof}
The general case of Proposition~\ref{p:polyhomo} is proved along the same lines, cf.\ \cite[Theorems 2 and 3]{MZ}. We extend the 
expansion iteratively on each of the faces along the tower of hemispheres in the singular fibers of $\calC_K$. Each step is carried 
out essentially the same as in the two-point case above. 

Now let us return to the fact, discussed at the very beginning of this section, that we actually need to consider solutions
over the final iterated blowup $\widetilde{\calE}_K$ rather than just $\calE_K$.  More specifically, the extended configuration
family $\calC_K$ can be lifted by the blowdown map $\widetilde{\calE}_K \to \calE_K$ to a space we call $\widetilde{\calC}_K$.
There is a lifted map $\widetilde{\pi}_K: \widetilde{\calC}_K \to \widetilde{\calE}_{K}$, and the added complexity is just in the base.

The fact that solution family $u$ is polyhomogeneous already on $\calC_K$ immediately implies that its lift
$\widetilde{\pi}_K^* u$ is polyhomogeneous on $\widetilde{\calC}_K$.    This is the fact that will be needed in the next section.

\section{The solution space}
The final step is to identify the points $\frakq \in \calE_K$ where the error term $f$ in~\eqref{e:eigenerror} vanishes. These correspond 
to the configurations of conic points such that the projected solution $g=e^{2u}g_{2}$ is actually spherical.     The way we do
this is as follows.  In the last section we found solutions up to a finite dimensional error, so the problem reduces to one of
understanding when this error vanishes.   Suppose, for the moment, that $\EE_2$ is one-dimensional, and spanned by
the eigenfunction $\phi$ for $\frakq \in \calU$.  The defect is then of the form $\Lambda(\frakq) \phi_\frakq$, where
$\Lambda(\frakq)$ is a scalar function.  Clearly $\Lambda$ vanishes on $F_0$, and we seek to compute where it vanishes
in the interior.  We compute the derivative of $\Lambda$ along any one of the curves $\vec Z(t) = \calF^{-1}(t\vec A)$ discussed at length in
\S 6. This derivative turns out to have an exceptionally pretty form: it is given by a symplectic pairing between the asymptotic
coefficients of the eigenfunction $\phi$ and of the (flat) conformal factor $v$ with the real and imaginary parts of the
constants $\vec A$.  Of course, in order for this parametrization in terms of $\vec A$ to be nonsingular, we must pass from $\calE_K$
to $\widetilde{\calE}_K$.  The end result is that the kernel of this pairing (for fixed $\phi$ and $v$) determines a codimension
one submanifold which meets the front face $\widetilde{F}_0$ transversely.  We also show that $\Lambda$ vanishes only
to first order along this submanifold. Taken together, this establishes the existence of a smooth codimension one
submanifold in $\widetilde{\calE}_K$ where the problem is solved exactly.  If the rank of $\EE_2$ equals $\ell > 1$,
the same considerations lead to the existence of a codimension $\ell$ submanifold of exact solutions. 

Suppose then that $\ell=1$, i.e., the eigenspace $E_2$ for $g_{0}$ is one-dimensional, so $\EE_{2}$ is a (real) line bundle over $\calU$.  
As will become clear below, it is necessary to work over the base $\widetilde{\calE}_K$, so we lift the solution family $u$,
the neighborhood $\calU$ and the line bundle $\EE_2$ up to this larger space via the blowdown map.   If $\frakp$ is a $K$-tuple
of points on $M$, possibly with multiplicities, we write $\frakq$ for some point over it in $\calE_K$ and $\tilde{\frakq}$ for
a point over it in $\widetilde{\calE}_K$.   As usual, write $[M; \frakp] = M_{\frakp}$, 
and denote by $\phi_{\frakp}$ the associated eigenfunction, which is unique up to scale because $\ell = 1$. We normalize it to have 
$L^2$ norm $1$. To simplify notation below, $v + \tilde{v} = \hat{v}$; this depends on $\frakq \in \calU$.  
Since $K_{g_{2}}=\Delta_{g_{2}} \hat{v}+e^{-2\hat{v}}K_{h_{0}}$, we write \eqref{e:eigenerror} as
\begin{equation}\label{e:g1'}
\Delta_{g_{2}}u - e^{2u}+\Delta_{g_{2}} \hat{v}+e^{-2\hat{v}}K_{h_{0}}=\Lambda_{\frakq}\phi_{\frakq}.
\end{equation}
This identifies the error term $f$ as $\Lambda_{\frakq}\phi_{\frakq}$ for some $\Lambda_{\frakq}\in \RR$.  As above,
we lift this equation and all these functions up to $\widetilde{\calE}_K$. In particular, we regard $\Lambda_{\tfrakq}$ 
as a function on $\widetilde{\calU}$. 

Again for simplicity, we consider the special case where there is only one cluster of points; we explain how to
carry this over to the general case at the end. 

Our goal is to find the entire locus where $\Lambda_{\tfrakq} = 0$. Note that $\Lambda_{\tfrakq_{0}}=0$ on the
face $\tilde{F}_0$, so if there is an additional submanifold $V$ which is transverse to this face and on which
$\Lambda_{\tfrakq}$ vanishes, then this function must vanish to second order at $V \cap \tilde{F}_0$.   

Differentiate $\Lambda_{\tfrakq}$ with respect to $\tfrakq$.  For the moment, fix a nonzero vector $\vec A \in \CC^K$ as in
\eqref{e:AK}, such that $\sum_i |A_{i}|^{2}=1$ and let $\vec A(t)=(tA_{1}, \dots, tA_{K})$, be a path in $\CC^{K}_{A}$ and $\tilde A(t)$ the 
lifted path in $\calA_{K}(M)$.   We assume that $\tilde A(t)$ intersects the front face $\calA_{K}(M)$ in the open set $\Omega$ (i.e., the 
complement of the finite number of codimension two subvarieties) where the lifted parametrization $P^{(K)}$ of $\widetilde{\calE}_K$ by $\vec A$ 
is nonsingular, see \eqref{e:PJ}. Define $\gamma(t)=P^{(K)}(\tilde A(t))$ to be the corresponding path in $\nwBase$. Since $\gamma(t)$ 
lies in the interior of $\nwBase$ for $t>0$, we can write $\gamma(t)=\vec z(t) =(z_{1}(t), \dots, z_{K}(t))\in M^{K}$.  Assume the first $(z_{1}. \dots, z_{J})$ gives the cluster of points. 

We now use the chain rule to compute that the derivative of 
$$
v(\gamma(t)):=\sum_{i=1}^{J}\frakb_{i}\log|z-z_{i}(t)|=\log|z^{J}+t(A_{1}z^{J-1}+\dots+A_{J})|+\calO(t^{1+\epsilon}). 
$$
with respect to $t$ equals
$$
\frac{d v(\gamma(t))}{dt}|_{t=0}=\Re\sum_{\ell=1}^{J} \frac{A_{\ell}}{z^{\ell}}.
$$

Using dots to indicate infinitesimal variation with respect to $t$, and dropping various subscripts (like $\tfrakq$) to
unclutter notation, and recalling that $e^{-2\hat{v}}\Delta_{h_{0}}=\Delta_{g_{2}}$, we get 
\[
\left(-2\dot{\hat{v}} \Delta_{g_2}(u+\hat{v}) + \Delta_{g_2}(\dot u + \dot{\hat{v}})\right) 
- 2\dot u e^{2u} -2\dot{\hat{v}} e^{-2\hat{v}}K_{h_{0}}  =\dot \Lambda \phi + \Lambda\dot \phi.
\]
Taking the inner product with $\phi$ and integrating yields
\begin{multline*}
\int_{M} -2\dot{\hat{v}} \Delta_{g_{2}}(u+\hat{v}) \phi \, dA_{g_{2}}+ \int_{M} 
\Delta_{g_{2}}(\dot u + \dot{\hat{v}}) \phi\, dA_{g_{2}} \\
 - \int_{M} 2\dot u e^{2u} \phi \, dA_{g_{2}} -\int_{M} 2\dot {\hat{v}} e^{-2\hat{v}}K_{h_{0}}  \phi \, dA_{g_{2}}\\
=\dot \Lambda \int_{M} |\phi|^{2} \, dA_{g_{2}} + \Lambda \int_{M} \dot \phi \phi \, dA_{g_{2}}.
\end{multline*}
At any point on $\tilde F_0$, $\hfrakq$ projects to $\frakp_0$ and $\Lambda= u = 0$, so at this face, 
\begin{equation}
\label{e:Adot}
\begin{split}
\dot \Lambda & =  \int -2\dot{\hat{v}} (\Delta_{g_{2}} \hat{v})\phi \, dA_{g_{2}}  \\ & + \int \Delta_{g_{2}}(\dot u + 
\dot{\hat{v}}) \phi \, dA_{g_{2}} - 2 \int (\dot u + \dot{\hat{v}} e^{-2\hat{v}}K_{h_{0}})  \phi \, dA_{g_{2}}.
\end{split}
\end{equation}

By Green's theorem, 
\begin{equation*}
\begin{split}
\int_{M_\frakp} \Delta_{g_{2}}\dot{\hat{v}} \phi \, dA_{g_{2}} =  \int_{M_\frakp} 
\dot{\hat{v}} \Delta_{g_{2}} \phi \, dA_{g_{2}} 
+ \lim_{\epsilon \to 0} \int_{r=\epsilon} \left(\dot{\hat{v}} \del_r \phi - \del_r  \dot{\hat{v}} \phi\right) \, r d\theta.
\end{split}
\end{equation*}
Inserting this into~\eqref{e:Adot} and using the two equalities $\Delta_{g_2} \phi = 2\phi$ and $-\Delta_{g_{2}}\hat{v} +1- e^{-2\hat{v}}K_{h_{0}} = 0$ 
(since $K_{g_2} = K_{g_0} = 1$), we see that almost all terms cancel and we are left simply with
\begin{equation}\label{e:preLambda}
\dot \Lambda=\int (\Delta_{g_{2}}\dot u-2\dot u)\phi + \lim_{\epsilon \to 0} \int_{r=\epsilon} \left( \dot{\hat{v}} \del_r \phi - 
\del_r \dot{\hat{v}} \phi \right) \, r d\theta.
\end{equation}

We now show that the first term vanishes, i.e.,
\begin{equation}\label{e:udot}
\lim_{\tfrakq\rightarrow \tfrakq_{0}}  \int (\Delta_{g_{2}}\dot u-2\dot u)\phi =0.
\end{equation}
For this we use the expansion of $u$ from the previous section. Recall that if $\rho$ is a boundary defining function for $\tilde{F}_0$, then 
$\rho=t^{1/J}$. 

Let $u'=\frac{\partial u}{\partial \rho}$ and note that $\dot u=u'  (\del \rho/\del t) = J^{-1} u' t^{1/J-1}$.  We claim that
\begin{equation}\label{e:uprime}
\int (\Delta_{g_{2}}u'-2u')\phi=\calO(\rho^{J-1+\epsilon}).
\end{equation}
If this is true, then by the chain rule, 
$$
\lim_{t\rightarrow 0} \int (\Delta_{g_{2}}\dot u-2\dot u) \phi=\lim_{t\rightarrow 0} \frac{\partial \rho}{\partial t} \int (\Delta_{g_{2}}u'-2u')\phi =0
$$
since the variables $\rho$ and $t$ are constant on the fibers of $\nwTotl$, and hence commute with $\Delta_{g_{2}}$.

We now prove the claim \eqref{e:uprime}. 
Observe first that
$$
\int (\Delta_{g_{2}}-2)u' \phi= \lim_{\epsilon\rightarrow 0} \int_{r=\epsilon}(u'\partial_{r}\phi - \partial_{r}u' \phi)rd\theta+ \int u' (\Delta_{g_{2}}-2)\phi.
$$
The integral in the middle vanishes since $u'$ decays sufficiently near each cone point.  On the other hand, the integral on the right equals 
$\int u' (\lambda-2)\phi$, so it suffices to show that the function $\rho \mapsto \lambda(\rho)$ satisfies
\begin{equation}\label{e:lambda}
\lambda_\rho-2=\calO(\rho^J),
\end{equation}
where $\lambda_\rho = \del_\rho \lambda$.    We prove this by inductively showing that each of the first $J-1$ derivatives of $\lambda$
vanish at $\rho = 0$.  This in turn relies on the
\begin{lemma}\label{l:vdot}
$$
\frac{\partial^{k}\hat v}{\partial\rho^{k}}(0,z) \equiv 0, k=1, \dots, J-1.
$$
\end{lemma}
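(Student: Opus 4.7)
The plan is to decompose $\hat v = v + \tilde v$ as in the setup of \S\ref{ss:initial}, and to treat the two summands separately. The boundary defining function $\rho$ for $\tilde F_0$ satisfies $\rho = t^{1/J}$ along the straight-line path $\vec A(t) = t \vec A_0$; equivalently $t = \rho^J$. Thus to conclude that $\partial^k \hat v/\partial \rho^k(0,z) = 0$ for $k = 1, \ldots, J-1$ at a fixed interior point $z$, it suffices to show the stronger vanishing statement $\hat v(\rho,z) - \hat v(0,z) = \calO(\rho^J)$.

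For the correction term $\tilde v$, the construction in \S\ref{ss:initial} produced it precisely so that $\tilde v(\rho) - \tilde v(0) = \calO(\rho^{J+\epsilon})$ on approach to $\tilde F_0$. In particular, each of the first $J-1$ derivatives of $\tilde v$ in $\rho$ vanishes at $\rho = 0$, so it contributes nothing to the derivatives we must control.

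The main content is the corresponding estimate for $v$. From Proposition \ref{p:localsheet}, we have
\begin{equation*}
v(\vec A; z) = \log |P(\vec A; z)| + \calO(|\vec A|^{1+\epsilon}), \qquad P(\vec A; z) = z^J + A_1 z^{J-1} + \cdots + A_J,
\end{equation*}
uniformly for $z$ bounded away from the cluster. Substituting $\vec A = t \vec A_0$ and using $t = \rho^J$,
\begin{equation*}
v(\rho; z) = \log \bigl| z^J + \rho^J ( A_{01} z^{J-1} + \cdots + A_{0J}) \bigr| + \calO(\rho^{J(1+\epsilon)}),
\end{equation*}
while $v(0;z) = \sum_j \frakb_j \log|z| = J \log|z| = \log|z^J|$. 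Subtracting and expanding the logarithm at a fixed $z \neq 0$ gives
\begin{equation*}
v(\rho; z) - v(0; z) = \log \bigl| 1 + \rho^J \sum_{\ell=1}^J A_{0\ell} z^{-\ell} \bigr| + \calO(\rho^{J(1+\epsilon)}) = \rho^J \, \Re \sum_{\ell=1}^J \frac{A_{0\ell}}{z^\ell} + \calO(\rho^{2J}) + \calO(\rho^{J(1+\epsilon)}),
\end{equation*}
which is $\calO(\rho^J)$. Combining the two estimates, $\hat v(\rho, z) - \hat v(0, z) = \calO(\rho^J)$, which yields the lemma.

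The main delicate point is to ensure that the relation $\rho = t^{1/J}$ really does hold for \emph{some} boundary defining function of $\tilde F_0$ after the iterated blowups defining $\nwBase$; this is essentially the content of the cluster-radius estimate $|z_j| \sim \max |A_\ell|^{1/\ell} = t^{1/J}|A_{0J}|^{1/J}$ from Proposition \ref{p:localsheet}, combined with the fact that the later blowups $\calE_J^{(j)} = [\calE_J^{(j-1)}; \sigma^{(j-1)}]$ do not alter the defining function of the outermost face. With this identification in hand, the only further check is that the $\calO(|\vec A|^{1+\epsilon})$ remainder from Proposition \ref{p:localsheet} is uniform on compact subsets of $z$ away from the coalescing points, which it is by the proof given there. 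In the multi-cluster case of \S\ref{ss:global}, the same argument applies cluster-by-cluster, since the clusters are parametrized transversally in $\nwBase$ and $v$ decomposes as a sum over clusters.
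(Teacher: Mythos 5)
Your proof is correct and follows essentially the same route as the paper: you split $\hat v = v + \tilde v$, use the designed $\calO(\rho^{J+\epsilon})$ decay of the correction $\tilde v$, and then extract the vanishing of derivatives of $v$ from the formula $v = \log|z^J + \rho^J(A_1 z^{J-1}+\cdots+A_J)| + \calO(\rho^{J+\epsilon})$. The only presentational difference is that the paper differentiates this formula $k$ times directly, exhibiting the explicit factor $\frac{J!}{(J-k)!}\rho^{J-k}$, whereas you establish $\hat v(\rho,z) - \hat v(0,z) = \calO(\rho^J)$ and then infer the derivative vanishing via Taylor--Peano; the latter step is valid here because $\hat v$ is polyhomogeneous (hence smooth in $\rho$ up to the $\calO(\rho^{J+\epsilon})$ tail), though it is worth flagging that the implication ``$\calO(\rho^J)$ difference $\Rightarrow$ vanishing low-order derivatives'' uses this regularity.
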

\begin{proof}
We initially extended $\tilde v$ from the front face so that $\tilde v(\rho,z)-\tilde v(0,z)=\calO(\rho^{J+\epsilon})$. Thus it suffices 
to prove the vanishing of the first $J-1$ derivatives of the flat conformal factor $v$.

Recalling
$$
v(\rho)=\log|z^{J}+\rho^{J}(A_{1}z^{J-1}+\dots+A_{J})|+\calO(\rho^{J+\epsilon}),
$$
then direct computation gives that
\begin{multline*}
\frac{\partial^{k} v}{\partial\rho^{k}}  = \frac{J!}{(J-k)!} \rho^{J-k}\, \Re \sum_{\ell=1}^{J} \frac{A_{1}z^{J-1}+\dots+A_{J}}{z^{J}+\rho^{J}(A_{1}z^{J-1}+\dots+A_{J})}  + \calO(\rho^{J-k+\epsilon}),
\end{multline*}
and this clearly vanishes at $\rho=0$ when $k \leq J-1$. 
\end{proof}

Now let us prove the corresponding fact for the eigenvalue. Differentiate the equation
$$
(\Delta_{g_{2}}-\lambda)\phi=0, g_{2}=e^{2\hat v} h_{0}
$$
with respect to $\rho$; this gives
$$
(-2\hat v_{\rho} \Delta_{g_{2}}-\lambda_{\rho}) \phi + (\Delta_{g_{2}}-\lambda) \phi_{\rho}=0.
$$
As in~\cite{MZ}, the eigenfunction $\phi$ is polyhomogeneous on $\calC_K$, and hence lifts to be polyhomogeneous 
on $\nwTotl$.  Now multiply this expression by $\phi$ and integrate to get
\begin{equation}\label{e:lrho}
\lambda_{\rho}=-\int 2\lambda\hat v_{\rho} |\phi|^{2}.
\end{equation}
Using $\hat v_{\rho}|_{\rho=0} =0$, we obtain $\lambda_{\rho}(0)=0$.  

Similarly, taking another derivative gives 
$$
\lambda_{\rho\rho}=-2\lambda_{\rho} \int \hat v_{\rho}|\phi|^{2}
-4\lambda \int \hat v_{\rho} \phi \phi_{\rho} - 2\lambda \int\hat  v_{\rho\rho}|\phi|^{2},
$$
so by Lemma~\ref{l:vdot} again, $\lambda_{\rho\rho}(0)=0$.  More generally, 
\[
\frac{d^{k}\lambda}{d\rho^{k}} = -2 \sum_{k_{1}+k_{2}+k_{3}=k-1 \atop k_{i}\geq 0} 
(\del_\rho^{k_1} \lambda) \, (\del_\rho^{k_2 + 1} \hat v)  \, (\del_\rho^{k_3} (|\phi|^2))
\]
Evaluating at $\rho = 0$ and assuming by induction that $\del_\rho^j \lambda(0) = 0$ for $j \leq k-1$, the only terms remaining in
the expression above are those with with $k_{1}=0$. However, in that case $k_2 + 1 \leq k-1$ so $\del_\rho^{k_2+1} \hat v(0)=0$ and
all terms in the sum vanish.

\medskip

We now obtain from \eqref{e:preLambda} and~\eqref{e:udot} that
\begin{equation}\label{e:dotA}
\dot \Lambda=\lim_{\epsilon \to 0} \int_{r=\epsilon} \left(\dot{\hat{v}} \del_r \phi - \del_r \dot{\hat{v}} \phi \right) \, r d\theta.
\end{equation}
To evaluate this more explicitly, we next show that only $\dot{v}$ (in the decomposition $\dot{\hat{v}} = \dot{\tilde{v}} + \dot v$) contributes.
Indeed, this follows by differentiating $\tilde v(\rho)-\tilde v(0)=\calO(\rho^{J+\epsilon}) = \calO(t^{1+\epsilon'})$ with respect to $t$. 
We conclude that
\begin{equation}
\dot \Lambda= \sum_{p_j} \lim_{\epsilon\rightarrow 0}\int_{\{r=\epsilon\}} ( \dot v \del_r \phi - \phi \del_r \dot v)\, rd\theta.
\end{equation}
(The sum indicates that we must sum the appropriate quantity over all conic points.) 

Suppose first that $\beta_{j}>1$. Then 
\begin{equation}\label{e:q}
  \begin{split}
\phi & \sim \sum_{\ell=0}^{[\beta_j]} (a_{\ell}'\cos( \ell\theta)+a''_{\ell}\sin (\ell\theta)) r^{\ell/\beta_j} + \calO(r^{1+\epsilon}), \\ 
\dot v & \sim \sum_{m=0}^{[\beta_j]}[e_m'\cos( m\theta)+e''_m\sin (m\theta)] r^{-m/\beta_j} + \calO(r^{\epsilon}),
\end{split}
\end{equation}
where \eqref{e:v0} relates $\{e_{m}', e_{m}''\}$ to $\{A_{i}:i=1, \dots, [\beta_{j}]\}$.  A brief computation then shows that this integral equals
$$
2\pi \sum_{\ell=1}^{[\beta_{j}]} \ell(a_{\ell}'e_{\ell}'+a_{\ell}''e_{\ell}'')
$$
in the limit as $\epsilon \to 0$. 

On the other hand, when $\beta_{j}<1$, 
\begin{equation}\label{e:phi}
\begin{split}
\phi & \sim c_{j0}'+ [c_{j1}'\cos( \theta)+c''_{j1}\sin (\theta)] r^{1/\beta_{j}}+ \calO(r^{2}), \\
\dot v & \sim d_{j0}' + [d_{j1}'\cos( \theta)+d''_{j1}\sin (\theta)] r^{-1/\beta_{j}} + \calO(r^{\epsilon}),
\end{split}
\end{equation}
and this leads to the contribution
$$
2\pi(c_{j1}'d_{j1}'+c_{j1}''d_{j1}'').
$$
We have now proved the formula 
\begin{equation}\label{e:balancing}
  \dot \Lambda=2\pi \sum_{j=1}^{k_0} \sum_{\ell=1}^{[\beta_j]} \ell (a'_{j \ell}e'_{j \ell}+ a''_{j \ell}e''_{j \ell}) +
  2\pi \sum_{j=k_0+1}^{k} (c_{j1}'d_{j1}'+c_{j1}''d_{j1}'').
\end{equation}

The key feature of \eqref{e:balancing} is the fact that the eigenfunction $\phi$ determines the constants 
$\{ a_{j\ell}', a_{j\ell}''\}$ and $\{c_{j1}', c_{j1}''\}$.   We prove below that if these all vanish, then $\phi = 0$.
Assuming that, then the equation $\dot \Lambda = 0$ defines a codimension $1$ subspace of the data 
$(e_{j\ell}',e_{j\ell}'', d_{j1}', d_{j1}'')$ for $v$.  Both sets of data lie in $\RR^{2K}$, $K = \sum_{j=1}^{k_0} [\beta_{j}]+(k-k_0)$.

If the dimension of $E_2$ has dimension $\ell > 1$, the computation is similar.  Since $\EE_2$ is smooth, there exists a local 
smooth orthonormal basis $\{\phi_{1}, \dots, \phi_{\ell}\}$, and the error term $f_{\hfrakq}$ in~\eqref{e:eigenerror}
is a linear combination of these sections at every point $\hfrakq$. Calculating the derivative $\dot{f}$ along any curve $\vec Z(t)$
emanating from $\tilde\frakq_0$ just as before, and pairing with each element of this basis at $\tilde\frakq_0 \in \tilde F_0$, we see that 
\begin{equation}
\int_{M} \dot f \phi_{i} \, dA_{g_{2}}=\lim_{\epsilon\rightarrow 0}\int_{\{r=\epsilon\}} 
(\dot v \del_r \phi_i - \phi_i \del_r \dot v)\, rd\theta.
\end{equation}
Each $\phi_{i}$ has an expansion as in \eqref{e:q} with coefficients $\{a_{jim}', a_{jim}''\}$, $j = 1, \ldots, k_0$, and
an expansion as in \eqref{e:phi} with coefficients $\{c_{ji m}', c_{ji m}''\}$ when $j > k_{0}$.   Similarly,
$\dot v$ has expansions with coefficients $\{e_{j m}', e_{j m}''\}$ and $\{d_{jm}', d_{jm}''\}$ for $j \leq k_0$ and $j > k_0$,
respectively. The same computation shows that the infinitesimal variation of $f$ vanishes provided
\begin{equation}
0= 2\pi\left(\sum_{j=1}^{k_0} \sum_{m=1}^{[\beta_j]} m(a_{jim}'e_{jm}'+a_{jim}''e_{jm}'') + \sum_{j=k_{0}+1}^{k} (c_{ji1}'d_{j1}'+c_{ji1}''d_{j1}'')\right)
\end{equation}
for each $i = 1, \ldots, \ell$. 

We summarize all of this in the following
\begin{definition}
Fix any basis $\phi_1, \ldots, \phi_\ell$ for $E_2$ and define the coefficient pairs $(a_{jim}', a_{jim}'')$ in the expansion
for $\phi_i$ near each $p_j$, $j = 1, \ldots, k_0$ and triples $(c_{ji0}, c_{ji1}', c_{ji1}'')$ near $p_j$ with  $j > k_0$.
Suppose that $\dot v\in r^{-1/\beta_{k}} \calC_{b}^{\ell,\delta}(M_{\frakp_{0}}) \cap \calA_{\mathrm{phg}}$, with
coefficient pairs $(e_{jm}', e_{jm}'')$ for each $j = 1, \ldots, k_0$ and $m \leq [\beta_j]$ and triplets $(d_{j0}, d_{j1}', d_{j1}'')$ 
for $p_j$ with $j > k_0$. 

Define the bilinear form $B: \RR^{2K} \times E_2 \to \RR$,  
\begin{equation}\label{e:balancing2}
\begin{split}
B(\phi_i, \dot{v}) = \sum_{j=1}^{k_{0}}\sum_{m=1}^{[\beta_{j}]} & m (a_{jim}'e_{jm}'+a_{jim}''e_{jm}'')  \\
& + \sum_{j=k_{0}+1}^{k} (c_{ji1}'d_{j1}'+c_{ji1}''d_{j1}''), \ \ i=1, \dots, \ell.
\end{split}
\end{equation}

We say a vector $\dot v$ is a solution if $B(\phi_i, \dot{v})=0$ for $i=1, \dots, \ell$. Denote the vector space of all such $\dot v$ as $V$. 
\end{definition}

Our next goal is to prove that $\ell = \dim E_2$ is not too large. 
\begin{lemma}
Let $g_{0}$ be a spherical cone metric on $M = \bS^2$ with $k \geq 3$ cone points. If $(\Delta_{g_0} - 2) \phi = 0$, i.e.,
$\phi \in E_2$, and furthermore, near each $p_j$, $\phi = \mbox{const.} + \calO(r^{1+\epsilon})$, then $\phi \equiv 0$.
\end{lemma}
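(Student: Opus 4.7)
The plan is to construct a Killing vector field from $\phi$, extend it continuously across the cone points using the regularity hypothesis, and deduce from rigidity of isometries of a spherical cone surface that it must vanish, forcing $\phi$ to be constant and hence zero.

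The first step is to prove that on the regular locus $M\setminus \frakp$ the symmetric $2$-tensor
\[
T := \nabla^{2}\phi + \phi\, g_{0}
\]
vanishes identically. With the convention $\Delta = -\operatorname{div}\nabla$ used in the paper, the trace condition $g_{0}^{ij}\phi_{;ij} = -\Delta\phi = -2\phi$ gives $\operatorname{tr} T = 0$, while a direct commutator calculation combined with $\Ric(g_{0}) = K g_{0} = g_{0}$ in dimension two yields $\operatorname{div} T = 0$. These two identities do not by themselves force $T\equiv 0$, but since $(M, g_{0})$ is locally isometric to the round sphere away from $\frakp$, and since every $2$-eigenfunction on $(\bS^{2}, g_{\mathrm{rd}})$ is a height function $\langle v,\cdot\rangle$ satisfying $\nabla^{2}\psi + \psi\, g_{\mathrm{rd}} = 0$ by direct computation, the pointwise identity $T\equiv 0$ transfers to the entire regular locus.

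Setting $X := J\nabla\phi$, where $J$ is the parallel rotation by $\pi/2$ in the tangent plane, a short calculation exploiting $T=0$ and the antisymmetry of $J$ shows that the symmetrization $\nabla_{i} X_{j} + \nabla_{j} X_{i}$ vanishes; thus $X$ is a Killing vector field on $M\setminus \frakp$. The regularity hypothesis $\phi = \mathrm{const.} + \calO(r^{1+\epsilon})$ forces $|\nabla\phi|_{g_{0}} = \calO(r^{\epsilon})$ at each $p_{j}$, so $X$ extends continuously to $M$ by setting $X(p_{j}) = 0$, and its flow defines a continuous one-parameter group of isometries of $(M, g_{0})$ fixing each of the $k\geq 3$ cone points.

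The last step is rigidity. On a simply connected neighborhood of any regular point, the developing map identifies $X$ with an infinitesimal rotation about a single axis of the round $\bS^{2}$, whose fixed set in the developed image consists of exactly two antipodal points. Since $k\geq 3$ and the cone points $p_{j}$ are distinct on $M$, the only way the isometric flow generated by $X$ can fix each $p_{j}$ simultaneously is $X\equiv 0$; consequently $\nabla\phi\equiv 0$, so $\phi$ is locally and hence globally constant, and the eigenvalue equation $\Delta\phi = 2\phi$ then forces $\phi\equiv 0$. The hard part will be making this final rigidity step fully rigorous, and in particular disposing of potentially degenerate reducible configurations where several cone points might develop to the same pole of a common axis; this is precisely where the assumption $k\geq 3$, combined with the distinctness of $\frakp$ and the monodromy/angle-sum structure, must be used decisively.
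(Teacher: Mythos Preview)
Your argument has a genuine gap in the first step, where you claim that $T = \nabla^{2}\phi + \phi\, g_{0}$ vanishes identically. You correctly observe that $\operatorname{tr} T = 0$ and $\operatorname{div} T = 0$ alone are insufficient, but your attempted patch via local isometry to the round sphere is flawed. The point is that $(\Delta - 2)\phi = 0$ is a \emph{local} second-order elliptic equation, so on any open disk in $\bS^{2}$ its solution space is infinite-dimensional; only the three-dimensional subspace of restrictions of global linear height functions satisfies the Obata equation $\nabla^{2}\psi + \psi\, g = 0$. A generic local solution (for instance any solution built from $Q$-type Legendre functions, or more concretely the solutions $r^{\ell/\beta_{j}}e^{\pm i\ell\theta}$ appearing in the indicial analysis) does \emph{not} satisfy $T=0$. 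Thus there is no way to deduce $T\equiv 0$ from the equation $(\Delta_{g_{0}} - 2)\phi = 0$ by a purely local argument, and you have not used the hypothesis $\phi = \mathrm{const.} + \calO(r^{1+\epsilon})$ at all in this step.

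The paper's proof obtains $T\equiv 0$ by a \emph{global} Bochner-type argument: the regularity hypothesis at each cone point is exactly what is needed to justify the integration by parts
\[
\int_{M} \langle \Delta_{1} d\phi, d\phi\rangle = \int_{M} \bigl(|\nabla d\phi|^{2} + |d\phi|^{2}\bigr),
\]
since it kills the boundary terms that would otherwise arise from the singular expansion of $\phi$. Combined with $\Delta_{1} d\phi = 2\,d\phi$ and the pointwise Cauchy--Schwarz inequality $|\nabla d\phi|^{2} \geq \tfrac12 |\Delta\phi|^{2}$, this forces equality in Cauchy--Schwarz, hence $\nabla d\phi = -\phi\, g_{0}$. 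After that, your steps~2 and~3 are essentially the paper's: set $X = J\nabla\phi$, check that $X$ is Killing, observe it extends across the cone points (vanishing there), and invoke the nonexistence of such a field when $k\geq 3$. So the structure of your argument is right, but the crucial identity $T\equiv 0$ requires the global integration-by-parts step rather than a local transfer.
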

\begin{proof}
We restate the assumption as saying that near a cone point with $\beta_{j}>1$ all coefficients of the terms $r^{m/\beta_j}$ vanish, $m = 1, \ldots, [\beta_j]$.
The proof of \cite[Proposition 13]{MW} can then be applied verbatim.  Indeed, recall: the absence of these terms 
validates the integration by parts
$$
\int_{M} \langle \Delta_{1}d\phi, d\phi \rangle=\int_{M} (|\nabla d\phi|^{2}+|d\phi|^{2});
$$
here $\Delta_{1}=\nabla^{*}\nabla + 1$ is the Hodge Laplacian for 1-form.  Next, using $\Delta_{1} d\phi=d \Delta\phi=2d\phi$ and 
the Cauchy-Schwarz inequality, $|\nabla d\phi|^{2}\geq \frac{1}{2}|\Delta \phi|^{2}$, we see that
\begin{multline*}
2\|d\phi\|^{2}=\int_{M} \langle  d\Delta_{0}\phi, d\phi\rangle =\int_{M} \langle \Delta_{1} d\phi, d\phi\rangle = \int_{M} (|\nabla d\phi|^{2}+|d\phi|^{2})\\\geq  \int_{M}(\frac{1}{2}|\Delta \phi|^{2}+|d\phi|^{2})=\int_{M} (\frac{1}{2} \langle  \Delta_{1} d\phi,d\phi\rangle + |d\phi|^{2})=2\|d\phi\|^{2}.
\end{multline*} 
Hence the inequalities are actually equalities, so $|\nabla d\phi|^{2}=\frac{1}{2}|\Delta \phi|^{2}=2|\phi|^{2}$, i.e., the Hessian of $\phi$ is pure trace:
\[
\nabla d\phi=-\frac{1}{2}\Delta \phi \cdot g=-\phi\cdot g.
\]

If $J$ is the complex structure on $M$, then define the vector field $J\nabla \phi$. For any vector fields $X,Y$ on M, 
\[
\begin{split}
\calL_{J\nabla \phi}g(X,Y) & =(J\nabla \phi)g(X,Y)-g([J\nabla \phi, X], Y)-g(X,[J\nabla \phi, Y])\\
& =g(\nabla_{J\nabla \phi}X-[J\nabla \phi, X],Y) + g(X,\nabla_{J\nabla \phi}Y-[J\nabla \phi,Y])\\
& =g\left(\nabla_{X}(J\nabla \phi), Y\right) + g\left(X, \nabla_{Y}(J\nabla \phi)\right)\\
& =-g(\nabla_{X}\nabla \phi,JY)-g(\nabla_{Y}\nabla \phi,JX)\\
& =-(\nabla d\phi)(X,JY)- (\nabla d\phi)(Y,JX)\\ & =\phi\cdot [g(X,JY)+g(Y,JX)]=0,
\end{split}
\]
i.e., $\nabla \phi$ is a Killing field on $M_{\frakp}$.  It also extends over each $p_j$ as a conformal Killing field since
it vanishes at these points. However, no such field exists since $k \geq 3$. Hence $\nabla \phi \equiv 0$ and so
$\phi$ is constant; but $\Delta \phi = 2\phi$ so $\phi \equiv 0$. 
\end{proof}

\begin{lemma}
Let $(M, g_0)$ be a spherical cone metric with $k \geq 3$ conic points (so $M$ is not a spherical football). Then the rank of the 
linear system is precisely $\ell$, hence $\ell = \dim E_2 \leq 2K_{0}$ where $K_{0}=\sum_{j=1}^{k_{0}} [\beta_{j}]$.  Furthermore: 
\begin{itemize}
\item if $\ell<2K$, then there is a $(2K-\ell)$ dimensional space of solutions $(e_{jm}', e_{jm}'', d_{j1}', d_{j1}'')$ in $V\subset \RR^{2K}$;
\item if $\ell=2K_{0}=2K$ (so $k_{0}=k$), then $V$ is trivial. 
\end{itemize}
\end{lemma}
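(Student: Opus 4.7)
The plan is to recast the claim as an assertion about an injective linear map $\Psi: E_2 \to \RR^{2K_0}$ extracted from the leading coefficients of eigenfunctions at the ``large-angle'' cone points, then deduce both the upper bound $\ell \leq 2K_0$ and the rank statement from this injectivity together with the previous lemma.

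First I would define $\Psi: E_2 \to \RR^{2K_0}$ by
\[
\Psi(\phi) = \left( (a'_{jm}, a''_{jm}) : 1 \leq j \leq k_0,\ 1 \leq m \leq [\beta_j] \right),
\]
using the expansion \eqref{e:q} of $\phi \in E_2$ at each cone point $p_j$ with $\beta_j > 1$. The key observation is that if $\Psi(\phi) = 0$, then near every cone point $\phi$ has the form (const) $+ \calO(r^{1+\epsilon})$: at points with $\beta_j > 1$ this holds because the remaining suppressed terms $r^{m/\beta_j}$ with $m > [\beta_j]$ carry exponent strictly greater than $1$, and because the $\log r$ term is excluded by the Friedrichs condition (Definition~\ref{HF} and Proposition~\ref{partexp}); at points with $\beta_j < 1$ the first non-constant term in \eqref{e:phi} is a multiple of $r^{1/\beta_j}$ with $1/\beta_j > 1$, so the required form is automatic regardless of the $c_{j1}', c_{j1}''$. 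The previous lemma then forces $\phi \equiv 0$, establishing injectivity of $\Psi$ and hence $\ell = \dim E_2 \leq 2K_0$.

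Next I would identify the linear system \eqref{e:balancing2} with a linear map $\widetilde B : \RR^{2K} \to \RR^\ell$, $\dot v \mapsto (B(\phi_i, \dot v))_{i=1}^\ell$, so that $V = \ker \widetilde B$. The matrix of $\widetilde B$ decomposes into two column blocks: one of size $2K_0$ indexed by the splitting variables $(e'_{jm}, e''_{jm})$, $j \leq k_0$, $m \leq [\beta_j]$, and another of size $2(k-k_0)$ indexed by the motion variables $(d'_{j1}, d''_{j1})$, $j > k_0$. The entries of the first block are $m \cdot a'_{jim}$ and $m \cdot a''_{jim}$, which (since $m \neq 0$) are the coordinates of $\Psi(\phi_i)$ up to nonzero rescaling. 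Thus the first block has rank $\ell$ if and only if $\Psi$ is injective; by the previous paragraph this is the case, so $\widetilde B$ itself has rank precisely $\ell$.

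Finally, the rank--nullity theorem gives $\dim V = 2K - \ell$, which is exactly the first bullet when $\ell < 2K$. In the second case $\ell = 2K_0 = 2K$ forces $k_0 = k$, so that the second column block of $\widetilde B$ is absent and the full-rank first block gives $V = \{0\}$. The main hurdle is pinning down the precise structural expansion of Friedrichs eigenfunctions near each cone point so that the finite collection of coefficients captured by $\Psi$ really suffices to detect $\phi$ up to an error of the form (const) $+ \calO(r^{1+\epsilon})$; once this is confirmed, the rest is straightforward linear algebra combined with the vanishing lemma.
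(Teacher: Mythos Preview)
Your argument is correct and follows the same route as the paper's proof: you encode the relevant leading coefficients at the large-angle cone points as a linear map $\Psi: E_2 \to \RR^{2K_0}$, invoke the preceding vanishing lemma to conclude $\Psi$ is injective (noting, exactly as needed, that at points with $\beta_j < 1$ the decay condition is automatic since $1/\beta_j > 1$), and then read off both the bound $\ell \leq 2K_0$ and the rank of $\widetilde B$ from this injectivity. The paper compresses all of this into a two-line contradiction argument and the phrase ``the remaining statements are elementary''; your version simply makes those elementary steps explicit.
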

\begin{proof}
To prove the first assertion, suppose that $\ell > 2K_0$. Then there exists some linear combination of elements in $E_2$ 
 which vanishes like $r^{1+\epsilon}$ modulo constants for each $p_j$, $j \leq k_0$.  However, as we have just shown, any such 
$\phi$ vanishes identically, which is a contradiction.  The remaining statements are elementary.
\end{proof}

\begin{remark}
When $(M,g_{0})$ is a spherical football of angle $2\pi\beta$, then by direct computation, $K_{0}=2[\beta]$, so 
$$
k_0 =0, K =2, \text{ if }\beta<1; \ k_0 = 2, K=2[\beta], \text{ if }\beta\geq 1.
$$
Since $\ell = 1$ if $\beta\notin\NN$ and $\ell=3$ if $\beta\in\NN$, the football always lies in the first case above, with $\ell < 2K$. 
\end{remark}

We now state our final and main theorem. Recall from Definition~\ref{d:psub} in \S 6.2 the definition of $p-$submanifolds,
and from Definition~\ref{def:B} in \S 6.3 the set $B$ of admissible angles. 
\begin{theorem}
Let $(M,g_0)$ be a spherical cone metric.  With all the notation as above, and in particular setting
$$
K_{0}=\sum_{j=1}^{k_{0}} [\beta_{j}], \ \ K= \sum_{j=1}^k  \max\{[\beta_{j}],1\} = K_{0}+ (k-k_{0}),
$$
then: 
\begin{enumerate}
\item[a)] (The unobstructed case) if $2\notin \spec(\Delta_{g_{0}})$, there is a smooth neighborhood in the space of spherical
cone metrics around $g_{0}$ parametrized by $(\fraks, \frakp, \vec \beta) \in \Metcc \times (\calE_k)^\circ \times \RR^k_+$; 
\item[b)] (Partial rigidity) if $1 \leq \dim E_2 = \ell < 2K$, then for any $\vec B\in B$ and $\fraks \in \Metcc$ near the dataset 
for $g_0$, there exists a $2K-\ell$ dimensional $p$-submanifold $X \subset \nwBase$ such that for each point $\tfrakq\in X$, 
there exists a spherical cone metric, i.e., a solution to $K(e^{2u}h_{0})-1=0$; this family of solutions depends on a choice
of a branch of roots as described in \S 6, but given that choice, locally unique for the specific conic data; 
\item[c)] (Rigidity) if $K_{0}=K$ and $\ell = 2K_0$, then for any $\vec B\in B$ and $\fraks \in \Metcc$, there is 
a neighborhood $\calU \in \nwBase$ such that $\frakp_{0}$ is the only configuration admitting a spherical cone metric.
In other words, there is no nearby spherical cone metric obtained by moving or splitting the conic points of $g_0$.
\end{enumerate}
\end{theorem}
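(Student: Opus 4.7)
Part (a) follows directly from Theorem~\ref{t:no2}: when $2 \notin \spec(\Delta_{g_0})$, the linearization $L_{g_0}$ is invertible on the H\"older--Friedrichs domain, and the implicit function theorem argument of \S 4.3 produces the claimed local diffeomorphism between the space of spherical cone metrics near $g_0$ and $\Metcc \times (\calE_k)^\circ \times \RR^k_+$.

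For parts (b) and (c), the plan is to combine three ingredients: the projected solution family of Proposition~\ref{p:eigen}, its polyhomogeneity from Proposition~\ref{p:polyhomo}, and the symplectic pairing formula \eqref{e:balancing2}. On a neighborhood $\widetilde{\calU}$ of $\tfrakq_0$ in $\nwBase$ we have a polyhomogeneous family $u(\tfrakq)$ with error $f_\tfrakq = \sum_{i=1}^{\ell} \Lambda_i(\tfrakq)\,\phi_i(\tfrakq)$, where $\phi_1, \dots, \phi_\ell$ is a smooth orthonormal frame of $\EE_2$. Since $g = e^{2u} g_2$ is a genuine spherical cone metric precisely when $\vec\Lambda(\tfrakq) = 0$, the task reduces to identifying the zero set of $\vec\Lambda: \widetilde{\calU} \to \RR^\ell$. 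By construction $\vec\Lambda$ vanishes identically on the central face $\tilde F_0$ (where $g_2$ restricts to $g_0$ on $M_{\frakp_0}$ and $u \equiv 0$ solves the equation), hence we may factor $\vec\Lambda = \rho\,\vec\Lambda^{(1)}$ where $\rho$ is a boundary defining function for $\tilde F_0$ and $\vec\Lambda^{(1)}$ is polyhomogeneous.

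The next step is to identify $\vec\Lambda^{(1)}|_{\tilde F_0}$. Parametrizing approach directions to $\tfrakq_0$ by $\vec A \in \RR^{2K}$ via the desingularized map $P^{(K)}$ of Proposition~\ref{p:globalv}, each such $\vec A$ determines an infinitesimal conformal factor $\dot v(\vec A) = \Re \sum_j A_j z^{-j}$ (suitably interpreted in each local cluster, as in \eqref{e:v0}). The derivative computation of \S 8, culminating in \eqref{e:balancing2}, shows that $\vec\Lambda^{(1)}|_{\tilde F_0}(\vec A) = 2\pi\,\bigl(B(\phi_i, \dot v(\vec A))\bigr)_{i=1}^{\ell}$. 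The rigidity lemma (if all nonconstant Taylor coefficients of $\phi \in E_2$ at every cone point vanish, then $\phi \equiv 0$) shows that the map $E_2 \to (\RR^{2K})^*$ determined by $B$ is injective; dually, the linear map $\hat B : \RR^{2K} \to \RR^\ell$, $\dot v \mapsto (B(\phi_i, \dot v))_i$, is surjective with kernel $V$ of dimension $2K - \ell$. Part (c) is now immediate: when $\ell = 2K_0 = 2K$, the kernel $V$ is trivial, so $\vec\Lambda^{(1)}|_{\tilde F_0}$ has no nonzero roots, forcing $\{\vec\Lambda = 0\} \cap \widetilde{\calU} = \tilde F_0 \cap \widetilde{\calU}$; no nontrivial splitting produces a nearby spherical metric.

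For part (b), when $1 \leq \ell < 2K$, apply Proposition~\ref{p:globalv} with $W = V$ to obtain a p-submanifold $\calW \subset \nwBase$ through $\tfrakq_0$ with tangent data $V$; along $\calW$ the leading coefficient of $\vec\Lambda$ vanishes. To upgrade this to an exact zero set, invoke an implicit function theorem for polyhomogeneous maps on manifolds with corners: selecting a complementary subspace $V^c$ of dimension $\ell$ on which $\hat B$ restricts to an isomorphism onto $\RR^\ell$ ensures that the differential of $\vec\Lambda^{(1)}$ transverse to $\tilde F_0$ in the $V^c$ directions is surjective, so $\{\vec\Lambda^{(1)} = 0\}$ is a p-submanifold meeting $\tilde F_0$ cleanly along $V$. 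Restricting to a fixed conformal class $\fraks$ and a fixed admissible angle vector $\vec B \in B$ cuts out the desired $(2K - \ell)$-dimensional p-submanifold $X$, with the branch-of-roots ambiguity from \S 6 corresponding to the choice of sheet of $P^{(K)}$. The main obstacle is executing the boundary implicit function theorem: we need $\vec\Lambda^{(1)}$ to be regular enough for the standard argument to apply, and we need $P^{(K)}$ to be a genuine local diffeomorphism at the relevant points. Both difficulties are absorbed by the earlier technical work --- polyhomogeneity is provided by Proposition~\ref{p:polyhomo} and the iterated blowup construction of $\nwBase$ from $\calE_K$ in \S 6.2 makes the parametrization nonsingular --- so the symplectic pairing formula reduces the entire deformation problem to the finite-dimensional linear algebra of $\hat B$.
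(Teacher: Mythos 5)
Your proposal is correct and follows essentially the same route as the paper: cite Theorem~\ref{t:no2} for case (a), then factor the error coefficient $\vec\Lambda$ by a boundary defining function for $\tilde F_0$, identify the leading coefficient on the front face with the symplectic pairing~\eqref{e:balancing2}, use the rigidity lemma to conclude that $\hat B$ has full rank $\ell$ (so the kernel $V$ has dimension $2K-\ell$), and finish with the implicit (resp.\ inverse) function theorem on $\nwBase$. The only minor point is that the paper states the IFT step more tersely (and somewhat awkwardly: ``if we could show that $\vec\Lambda/t$ changes sign\ldots''), whereas you articulate it more cleanly by naming the complementary subspace $V^c$ on which $\hat B$ restricts to an isomorphism; this is a welcome clarification rather than a different argument.
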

\begin{remark}
We have not explicitly stated the smooth dependence of these solutions on the underlying smooth conformal class. Indeed, this is
a bit complicated since even if a given conic metric $(M,g_0)$ is obstructed, i.e., $2$ is in the spectrum of its scalar Laplacian 
so we are case b), then for generic nearby classes and conic metrics, $2$ will no longer be in the spectrum. More generally, if the multiplicity
of the eigenvalue $2$ is $\ell$, there is a local stratification of the space of nearby conic metrics according  to the multiplicity
of this eigenvalue. The family of solution metrics is smooth within each stratum. However,  this is somewhat subtle 
and we do not prove this here. 
\end{remark}
\begin{proof}
Case 1 is just Theorem~\ref{t:no2}.  

Next, for prove Case 2, we show that the set of configurations $\tfrakq \in \tilde \calE_K$ near $\tfrakq_0$ for which the 
error term $f$ in~\eqref{e:eigenerror} vanishes is a $p$-submanifold.  Since $\ell < 2K$, there is a $(2K-\ell)$-dimensional 
subspace $V\in \RR^{2K}$ for which $\dot f(0)=0$. In terms of the map $P^{(K)}$ from~\eqref{e:PK}, we 
define
$$
\vec \Lambda: \calA_K \rightarrow \EE_{2}, \ \tilde A \mapsto \vec \Lambda(P^{(K)}(\tilde A)). 
$$ 
The computation in~\eqref{e:balancing2} determines the $2K-\ell$ dimensional kernel of $d\vec\Lambda$. 

The front face of $\calA_K$ is locally diffeomorphic to $\widetilde{F}_0$.  
Writing the error $f = \sum \Lambda_i \phi_i$, then near this front face, then by the computation leading to~\eqref{e:balancing2},  for each $i$, 
$$
\Lambda_{i}=t\dot \Lambda_{i}(0) + \calO(t^{1+\epsilon}),
$$
hence 
$$
(\Lambda_{i}/t)|_{t=0}=\sum_{j=1}^{k_{0}}\sum_{m=1}^{[\beta_{j}]} m (a_{jim}'e_{jm}'+a_{jim}''e_{jm}'')+ \sum_{j=k_{0}+1}^{k} (c_{ji1}'d_{j1}'+c_{ji1}''d_{j1}'')
$$
on this front face. 
Note that $\vec \Lambda(t)=0$ on the
face $\tilde{F}_0$, so if there is an additional submanifold $X$ which is transverse to this face and on which
$\vec \Lambda(t)$ vanishes, then this function must vanish to second order at $X \cap \tilde{F}_0$. If we could show that $\vec \Lambda /t$ changes sign when crossing a submanifold on $\tilde F_{0}$, hence there exists such a p-submanifold $X$ where $\Lambda=0$. 
Since the projectivisation of $\{\vec x=(e_{*}', e_{*}'', d_{*}', d_{*}'')\}$ give coordinates on this front face, we 
conclude that there exist directions such that $\del_x (\vec \Lambda/t)= 0$, and other directions for which
$\del_x(\vec \Lambda/t) \neq 0$.  

The implicit function theorem now provides the existence of a $p$-submanifold in $\widetilde{\calE}_K$, as claimed.

Finally, Case 3 is almost the same as Case 2.  Here the linear system~\eqref{e:balancing2} has no nontrivial solutions, so 
$d\vec\Lambda$ is invertible, and the conclusion follows from the inverse function theorem.
\end{proof}
\begin{remark}
Case 2 definitely can occur. The reader is referred to \cite{Zhu18}, which discusses the example of two glued footballs. There the dimension
of $E_2$ is positive but not maximal.  We are not aware of any examples which fall under Case 3, but it is not excluded by any results
at present.  
\end{remark}

\begin{proposition}
In Cases 1 or 2 above, the solution $u(\fraks, \vec B, \tfrakq)$ lies in $\calC_{b}^{m,\alpha}(\tilde\calC_{K})$ 
for all $m$ and is polyhomogeneous on $\nwTotl$. 
\end{proposition}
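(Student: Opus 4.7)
The plan is to show that every stage of the solution construction preserves polyhomogeneity, so that the final solution inherits it from the projected solution. The starting point is Proposition~\ref{p:polyhomo}, which asserts that the projected solution $u_{\mathrm{proj}}$ from Proposition~\ref{p:eigen} is polyhomogeneous on $\calC_K$. Its proof proceeds inductively along the tower of hemispheres in the singular fibers, solving the projected Liouville equation on each face and matching the resulting expansions at intersections; the finite-rank projection onto $\EE_2^\perp$ does not interfere with this local iteration, so we may quote the result directly. The smooth dependence on $(\fraks,\vec B)$ is built into the construction and simply contributes smooth parameter dependence on top of the polyhomogeneous expansion in the configuration-space variables.

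Next, I would lift this regularity from $\calC_K$ to $\nwTotl$. The map $\nwTotl\to\calC_K$ covering the blowdown $\nwBase\to\calE_K$ is a composition of blowdowns of p-submanifolds, hence a b-map of manifolds with corners, and pullback along a b-map preserves polyhomogeneity (the lifted index set is determined combinatorially from the boundary exponent calculus). Thus $u_{\mathrm{proj}}$ is polyhomogeneous on $\nwTotl$, still with smooth dependence on $(\fraks,\vec B)$. In Case (a) the obstruction bundle $\EE_2$ is trivial, so $u_{\mathrm{proj}}$ already solves the full Liouville equation and the proposition is proved.

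For Case (b) one restricts to the $p$-submanifold $X\subset\nwBase$ produced by the main theorem, along which the finite-dimensional error $f=\sum_i\Lambda_i\phi_i$ vanishes; by uniqueness in the contraction argument of Proposition~\ref{p:eigen}, the restriction of $u_{\mathrm{proj}}$ to $\widetilde{\pi}_K^{-1}(X)$ coincides with the exact solution $u(\fraks,\vec B,\tfrakq)$. Because $\widetilde{\pi}_K:\nwTotl\to\nwBase$ is a b-fibration and $X$ is a p-submanifold of its base, the preimage $\widetilde{\pi}_K^{-1}(X)$ is again a p-submanifold of $\nwTotl$, and restriction of a polyhomogeneous function to a p-submanifold is polyhomogeneous (with index set obtained by selecting the appropriate subfamily of boundary faces). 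The $\calC_b^{m,\alpha}$ statement then follows from polyhomogeneity, since any polyhomogeneous function with nonnegative leading exponents at each boundary face lies in every $\calC_b^{m,\alpha}$.

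The main obstacle, and the step requiring the most care, will be step two: verifying that the iterated blowup $\nwBase\to\calE_K$ and its cover $\nwTotl\to\calC_K$ constructed in \S\ref{s:split} really do fit into the b-category so that pullback preserves polyhomogeneity with an effectively computable index family. This relies on the fact that at each stage $\calE_J^{(j+1)}=[\calE_J^{(j)};\sigma^{(j)}]$ the centre $\sigma^{(j)}$ is a $p$-submanifold (which was verified in \S\ref{s:split}), and the analogous statement for $\nwTotl$ follows by pullback under the b-fibration $\pi_K$. Everything else is a routine application of the standard boundary-exponent calculus for polyhomogeneous functions on manifolds with corners and the preservation of polyhomogeneity under restriction to $p$-submanifolds.
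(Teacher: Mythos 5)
Your argument is essentially identical to the paper's (which is only three sentences long): quote Proposition~\ref{p:polyhomo} for polyhomogeneity of the projected solution, lift to $\nwTotl$ via the blowdown (the paper does this at the end of \S 7), and observe that the restriction to the $p$-submanifold $X$ where the error $f$ vanishes is both polyhomogeneous and the exact solution. You have merely spelled out the intermediate steps — behavior under pullback by the blowdown b-map, that $\widetilde{\pi}_K^{-1}(X)$ is a $p$-submanifold, and the uniqueness-via-contraction identification of the restricted projected solution with the actual one — all of which the paper treats as understood.
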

\begin{proof}
This follows from Proposition~\ref{p:polyhomo}. The solution modulo the obstruction bundle is polyhomogeneous, hence so
is its restriction to any smooth $p$-submanifold.  However, the restriction to the particular $p$-submanifold identified 
in this Theorem corresponds to the family of actual solutions.
\end{proof}

\noindent\textbf{Acknowledgement: }
The authors are grateful to Alexandre Eremenko, Andrea Malchiodi, Richard Melrose, Gabriele Mondello, Dmitri Panov, Yanir Rubinstein, Song Sun, and Bin Xu for helpful conversations. R.M. is supported by the NSF grant DMS-1608223. X. Z. is supported by the NSF grant DMS-2041823.

\end{document}